\documentclass[a4paper,12pt]{jpconf}

\usepackage{graphicx}
\usepackage{amsthm,amsmath,amsfonts,amssymb,mathrsfs}
\usepackage{lmodern}  
\usepackage{bbm} 

\newtheorem{definition}{Definition}[section]

\newtheorem{corollary}{Corollary}[section]
\newtheorem{proposition}{Proposition}[section]
\newtheorem{example}{Example}[section]
\newtheorem{remark}{Remark}[section]
\newtheorem{notation}{Notation}[section]


\newcommand{\df}[1]{\textbf{\textit{#1}}}  
\newcommand{\Tau}{\mathcal{T}} 
\newcommand{\E}{\mathbbmss{E}}
\newcommand{\U}{\mathbbmss{U}}
\newcommand{\PP}{\mathbb{P}} 
\newcommand{\NN}{\mathbb{N}} 
\newcommand{\Fb}{\mathbf{F}} 
\newcommand{\Rb}{\mathbf{R}} 
\newcommand{\Rs}{\mathscr{R}} 
\newcommand{\Ncal}{\mathcal{N}}  
\newcommand{\Acal}{\mathcal{A}}  
\newcommand{\Sup}{\mathrm{sup}}  
\newcommand{\SSE}[1][\U]{{\mathcal{SS}(#1)}_\E}  
\newcommand{\SPE}[1][\U]{{\mathcal{SP}(#1)}_\E}  
\newcommand{\STE}[1][X]{{\mathcal{S}\mathrm{Top}(#1)}_\E}  
\newcommand{\softsubseteq}{\tilde{\subseteq}}  
\newcommand{\softsupseteq}{\tilde{\supseteq}}  
\newcommand{\softequal}{\tilde{=}}  
\newcommand{\softin}{\tilde{\in}}  
\newcommand{\softnotin}{\tilde{\notin}}  
\newcommand{\nullsoftset}{(\tilde{\emptyset},\E)}  
\newcommand{\absolutesoftset}[1][\U]{(\tilde{#1},\E)}  
\newcommand{\softsetminus}{\widetilde{\setminus}}  
\newcommand{\softcup}{\tilde{\cup}}  
\newcommand{\softcap}{\tilde{\cap}}  
\newcommand{\softbigcup}{\widetilde{\bigcup}}  
\newcommand{\softbigcap}{\widetilde{\bigcap}}  
\newcommand{\softcl}[2][X]{s\textit{-}cl_#1\left( #2 \right)}  
\newcommand{\rl}[1][Y]{{}^{#1}\!}   
\newcommand{\graf}[1]{Gr(#1)}  
\newcommand{\apici}[1]{\lq\lq\textit{#1}\rq\rq}   


\begin{document}
\title{Soft $N$-Topological Spaces}

\author{Giorgio Nordo}
\address{MIFT - Dipartimento di Scienze Matematiche e Informatiche, scienze Fisiche e scienze della Terra,
Messina University, Messina, Italy}
\ead{giorgio.nordo@unime.it}

\begin{abstract}
Very recently, the idea of studying structures equipped with two or more soft topologies
has been considered by several researchers.
Soft bitopological spaces were introduced and studied, in 2014, by Ittanagi
as a soft counterpart of the notion of bitopological space.
and, independently, in 2015, by Naz, Shabir and Ali.
In 2017, Hassan too introduced the concept
of soft tritopological spaces and gave some first results.
The notion of $N$-topological space related to ordinary topological spaces
was instead introduced and studied, in 2011, by Tawfiq and Majeed.
In this paper we introduce the concept of Soft $N$-Topological Space
as generalization both of the concepts of Soft Topological Space and $N$-Topological Space
and we investigate such class of spaces and their basic properties with particular regard
to their subspaces, the parameterized families of crisp topologies generated by them
and some new separation axioms called $N$-wise soft $T_0$, $N$-wise soft $T_1$, and $N$-wise soft $T_2$.
\end{abstract}

\section{Introduction}
Inspired by a Pawlak's idea \cite{pawlak},
in 1999, Molodtsov \cite{molodtsov} initiated the novel concept of Soft Sets Theory as a new mathematical tool
and a completely different approach for dealing with uncertainties
while modelling problems in computer science, engineering physics, economics,
social sciences and medical sciences.
Molodtsov defines a soft set as a parameterized family of subsets of universe set where each element
is considered as a set of approximate elements of the soft set.

The absence of any restrictions on the approximate description in Soft Set Theory makes it
very convenient and easy to apply respect to other existing methods as Probability Theory and Fuzzy Set Theory.
In fact, we can define and use any kind of parametrization with the help of words, sentences,
real numbers, real functions, mappings, etc.

In the past few years, the fundamentals of soft set theory have been studied by many researchers.

Starting from 2002, Maji, Biswas and Roy \cite{maji2002, maji2003} studied the theory of soft sets initiated by
Molodstov, defining equality of two soft sets, subset and super set of a soft set,
complement of a soft set, null
soft set and absolute soft set with examples. Soft binary operations like AND, OR and the operations of union,
intersection were also defined.
In 2005, Pei and Miao \cite{pei} and Chen et al. \cite{chen} improved the work of Maji.

Further contributions to the Soft Sets Theory were given by Yang \cite{yang},
Ali et al. \cite{ali},
Fu \cite{fu},
Qin and Hong \cite{qin},
Sezgin and Atag\"{u}n \cite{sezgin},
Neog and Sut \cite{neog},
Ahmad and Kharal \cite{ahmad2009},
Babitha and Sunil \cite{babitha2010},
Ibrahim and Yosuf \cite{ibrahim},
Singh and Onyeozili \cite{singh},
Feng and Li \cite{feng},
Onyeozili and Gwary \cite{onyeozili}.

In the original formulation, every soft set is defined on a own subset of the common set of parameters
but, recently, Ma, Yang and Hu \cite{ma} proved that every soft set is equivalent
to a soft set related to the whole set of parameters.
This allow us to consider all the soft sets over the same parameter set and
simplify the definitions of all the relations and operations between them.
In particular, in 2014 \c{C}a\u{g}man \cite{cagman2014} improved and simplified
the definitions of operations on soft sets by using a single parameter set.

In 2011, Shabir and Naz \cite{shabir} introduced the concept of soft topological spaces,
also defining and investigating the notions of soft closed sets, soft closure,
soft neighborhood, soft subspace and some separation axioms.
Some other properties related to soft topology were studied by
\c{C}a\u{g}man, Karata\c{s} and Enginoglu in \cite{cagman2011}.
In the same year Hussain and Ahmad \cite{hussain} continued the study investigating
the properties of soft closed sets, soft neighbourhoods, sof interior, soft exterior
and soft boundary.
The notion of soft interior, soft neighbordhood and soft continuity were also
object of study by Zorlutuna, Akdag, Min and Atmaca in \cite{zorlutuna}.
Some other relations between such a notions was proved by Ahmad and Hussain in \cite{ahmad}.
The neighbourhood properties of a soft topological space were investigated in 2013
by Nazmul and Samanta \cite{nazmul}.

In \cite{min}, Min pointed out some errors contained in the Shabir--Naz paper and
investigated some properties of the separation axioms defined there.
The class of soft Hausdorff spaces was extensively studied by Varol and Ayg\"{u}n in \cite{varol2013}.

In 2012, Ayg\"{u}no\u{g}lu and Ayg\"{u}n \cite{aygunoglu}
defined and studied the notions of soft continuity and soft product topology.
Some years later, Zorlutuna and \c{C}aku \cite{zorlutuna2015}
gave some new characterizations of soft continuity, soft openness and soft closedness of soft mappings,
also generalyzing the Pasting Lemma to the soft topological spaces.
Soft first countable and soft second countable spaces were instead defined and studied by Rong in \cite{rong}.
Furthermore, the notion of soft continuity between soft topological spaces was independently introduced
and investigated by Hazra, Majumdar and Samanta in \cite{hazra}.

In 2013, Peyghan, Samadi and Tayebi \cite{peyghan} introduced the concept of soft connectedness
and soft Hausdorff space and investigated some related properties.
Soft connectedness was also studied in 2015 by Al-Khafaj \cite{al-khafaj} and Hussain \cite{hussain2015a}.
In the same year, Das and Samanta \cite{das,das2} introduced and extensively studied the soft metric spaces.

In 2014, the same three authors \cite{peyghan2014} defined also the notions of soft compactness
and countably soft compactness and obtain several results involving them and
some separation axioms introduced in the papers by Shabir-Naz \cite{shabir} and Min \cite{min}.
The notion of soft proximity was instead introduced and studied by Kandil, Tantawy,
El-Sheikh and Zakaria in \cite{kandil}.

In 2015, Hussain and Ahmad \cite{hussain2015b} redefine and explore several properties of
soft $T_i$ (with $i=0,1,2,3,4$) separation axioms and discuss some soft invariance properties
namely soft topological property and soft hereditary property.

In \cite{xie}, Xie introduced the concept of soft points and prove
that soft sets can be translated into soft point sets and then may conveniently deal
with soft sets as same as ordinary sets.

In the same year, Matedjdes \cite{matejdes2015} and, independently,
Shi and Pang \cite{shi} proved that the notion of soft topology is redundant,
i.e. that a soft topology in the sense of Shabir and Naz \cite{shabir}
can be interpreted as a classical (crisp) topology (by two different points of view).

In 2016, \"{O}zt\"{u}rk and Yolcu \cite{ozturk,ozturk2} introduced the notion of soft uniformity and
studied some properties of the soft uniform spaces
while Tantawy, El-Sheikh and Hamde \cite{tantawy} continued the study of soft $T_i$-spaces
(for $i=0,1,2,3,4,5$) also discussing the hereditary and topological properties for such spaces.

In 2017, Matejdes \cite{matejdes} studied various type of soft separation axioms and pointed out that
any soft topological space is homeomorphic to a crisp topological space defined on a cartesian product
and so that many soft topological notions and results can be directly derived from general topology.
For such a reason, Chiney and Samanta \cite{chiney} introduced a new definition of soft topology
by using elementary union and elementary intersection instead of the soft ones used
by Shabir and Naz and studied some basic properties of this new type of soft topological space.

Further contributions to the theory of soft sets and topology were added,
in 2012, by Varol, Shostak and Ayg\"{u}n \cite{varol},
by Janaki \cite{janaki},
in 2013 and 2014, by Wardowski \cite{wardowski}, Nazmul and Samanta \cite{nazmul2014},
by Georgiou, Megaritis and Petropoulos \cite{georgiou2013,georgiou2014},
in 2015 by Ulu\c{c}ay, \c{S}ahin, Olgun and Kili\c{c}man \cite{ulucay},
in 2016 by Wadkar, Bhardwaj, Mishra and Singh \cite{wadkar},
by Matejdes \cite{matejdes2016},
and by Fu and Fu \cite{fu2016},
in 2017 by Bdaiwi \cite{bdaiwi},
and in 2018 by Bayramov and Aras \cite{bayramov}.

In 1963, Kelly \cite{kelly} introduced the concept of bitopological space,
that is a structure with a pair of distincts topologies on the same set,
and studied their pairwise separation axioms.
In later years, many researchers (see, for example, \cite{kim,lal,lane,patty,pervin,reilly,singal})
have investigated bitopological spaces due to the richness of their structure
and potential for carrying out many generalization of classical topological results
in bitopological environment.

In 2003 Luay \cite{luay}, inspired by a previous work of Kov\'{a}r \cite{kovar},
gave a further generalization by introducing the notion of tripological space,
i.e. a set equipped with three different topologies on it.

Furthermore, in 2013, Mukundan introduced the notion of quad topological space and studied some sets
related to that space. In 2014, Tapi, Sharma and Deole \cite{tapi} defined and studied some
separation axioms in quad topological spaces.


In the last years, the idea of studying a soft set equipped with two or more soft topologies
has been considered by several researchers.
Soft bitopological spaces were introduced and studied, in 2014, by Ittanagi \cite{ittanagi}
as a soft counterpart of the notion of bitopological space.
and, independently, in 2015, by Naz, Shabir and Ali \cite{naz}
(under the slight different name of "bi-soft topological space").
In 2017, Hassan \cite{hassan} introduced also the concept of soft tritopological spaces
and gave some first results.
In the same year, Khattak et al. \cite{khattak} defined the notion of soft quad topological space
which involves four soft topologies and focused their study on some soft separation axioms in such a space.
In 2018, Khattak and some other researchers \cite{khattak2018} continued the investigation
studying the soft semi separation axioms in soft quad topological spaces.

In 2011, Tawfiq and Majeed \cite{tawfiq} and, independently, in 2012, Khan \cite{khan}
introduced the concept of $N$-topological space.

In the present paper we will present the notion of soft $N$-topological space as generalization both of the
concepts of soft topological space and $N$-topological space.

The rest of this work is organized as follows.
In the next section, concepts, notations and basic properties of soft sets and their operations are recalled.
In Section 3, the main notions of the theory of soft topology and some
fundamental properties are described and reviewed.
In Section 4, the new definition of $N$-soft topology is introduced and some basic properties
concerning -- in particular -- the subspace, the parameterized families of crisp topologies
generated by it and the new separation axioms called $N$-wise soft $T_0$, $N$-wise soft $T_1$,
and $N$-wise soft $T_2$ are investigated.
In the final section some concluding comments are summarized.

\section{Soft Sets}
In this section we present some basic definitions and results on soft sets and suitably exemplify them.
Terms and undefined concepts are used as in \cite{engelking}. 

\begin{definition}{\rm\cite{molodtsov}}
\label{def:softset}
Let $\U$ be an initial universe set and $\E$ be a nonempty set of parameters (or abstract attributes)
under consideration with respect to $\U$ and $A\subseteq \E$,
we say that a pair $(F,A)$ is a \df{soft set} over $\U$
if $F$ is a set-valued mapping $F: A \to \PP(\U)$
which maps every parameter $e \in A$ to a subset $F(e)$ of $\U$.
\end{definition}

In other words, a soft set is not a real (crisp) set
but a parameterized family $\left\{ F(e) \right\}_{e\in A}$ of subsets of the universe $\U$.
For every parameter $e \in A$, $F(e)$ may be considered as the set of \textit{$e$-approximate elements}
of the soft set $(F,A)$.

\begin{remark}
\label{rem:softsetsreducedtocrispsets}
Let us note that when the parameter set has only one element, i.e. when $\E = \{ \alpha \}$
any soft set $(F,A)$ is equivalent to the ordinary (crisp) set $F(\alpha)$.
\end{remark}

Although the Soft Sets Theory have had a great development in the past few years,
many researchers pointed out that some propositions, such as are 
generalization of De Morgan's Laws, Distributive Laws to soft sets are affected by errors that are
essentially due to some misunderstanding in the definition of the notions
of soft subset and soft intersections as given in \cite{maji2003}.

In 2010, Ma, Yang and Hu \cite{ma} proved that every soft set $(F,A)$ is equivalent
to the soft set $(F,\E)$ related to the whole set of parameters $\E$,
simply considering empty every approximations of parameters which are missing in $A$,
that is extending in a trivial way its set-valued mapping,
i.e. setting $F(e)=\emptyset$, for every $e \in \E \setminus A$.

For such a reason, in this paper we can consider all the soft sets over the same parameter set $\E$
as in \cite{chiney} and we will redefine all the basic operations and relations
between soft sets originally introduced in \cite{molodtsov,maji2002,maji2003} as in \cite{nazmul},
that is by considering the same parameter set.

\begin{remark}
\label{rem:softsetasgraph}
Another way to represent a soft set $(F,\E)$ is as the set of all the pairs $\left( e, F(e) \right)$
parameter-approximation (with $e \in \E$), i.e the graph $\graf{F} \subseteq E \times \PP(\U)$
of the set-valued mapping $F: \E \to \PP(\U)$.
In fact, in 2015, Matejdes \cite{matejdes2015} pointed out that there is a one-to-one correspondance
from the set $\Fb( \E,\PP(\U) )$ of all the set-valued mappings from $\E$ to $U$
onto the set $\Rb(\E,\U)$ of all binary relations from $\E$ to $\U$
(that is a bijective mapping $\Phi : \Fb( \E,\PP(\U) ) \to \Rb(\E,\U)$
defined, for any $F \in \Fb( \E,\PP(\U) )$ by $\Phi(F) = \Rs_F$
where $\Rs_F = \{ (e,u) \in \E\times \U : \, u \in F(e) \} = \graf{F}$
or, equivalently, for every $\Rs \in \Rb(\E,\U)$, by $\Phi^{-1}(\Rs) = F_{\Rs})$
where $F_{\Rs}: \E \to \PP(\U)$ is the set-valued mapping which maps
every $e \in \E$ in $F_{\Rs}(e)= \{ u \in \U : \, (e,u) \in \Rs \} = \Rs(e)$ )
and so that a soft set $(F,\E)$, which is substantially defined by $F \in \PP(\U)$,
bijectivly corresponds to the graph $\Phi(F) = \graf{F}$ of its set-valued mapping.
\end{remark}

\begin{example}
Here we present some examples of soft sets commonly used in the literature.
\label{ex:soft_set}
\begin{enumerate}
\item {\rm\cite{cagman2014}} Assume that there are six houses in the universe
$\U = \{h_l, h_2, h_3, h_4, h_5, h_6 \}$ under consideration, and that
$\E = \{e_1, e_2, e_3, e_4, e_5 \}$ is a set of decision parameters,
where the $e_i$ parameter (with $i=1,\ldots 5$) stands 
for \apici{expensive}, \apici{beautiful}, \apici{wooden}, \apici{cheap}
and \apici{in green surroundings} respectively.
Consider the subset of parameters $A = \{e_1, e_3, e_4 \}$ and define
the set-valued mapping \apici{house with some characteristic} $F : A \to \PP(\U)$ by setting
$F(e_1) = \{ h_2, h_4 \}$, $F(e_3) = \U$ and $F(e_4) = \{ h_1, h_3, h_5 \}$.
Then the pair $(F,A)$ is a soft set which espresses the fact that the houses $h_2$ and $h_4$ are expensive,
all the houses are wooden and the house $h_1, h_3, h_5$ are cheap
and it can also be represented by the graph of $F$ consisting of the following family of approximations
$\graf{F}= \left\{ (e_1,\{ h_2, h_4 \}), (e_3, \U), (e_4,\{ h_1, h_3, h_5 \}) \right\}$.

\item {\rm\cite{molodtsov}} Let $(X,\Tau)$ be a topological space on a nonempty set $X$.
If, for every $x \in X$, we consider the family $\Ncal^o_x = \{ N \in \Tau : x \in \Tau \}$
of all open neighbourhoods of $x$ and define a set-valued mapping $T : \ X \to \PP(X)$
by setting $T(x)=\Ncal^o_x$ (for any $x \in X$), then the pair $(X, T)$ is a soft set over the universe $X$.

\item {\rm\cite{molodtsov}}
Let $A$ be a fuzzy set over a universe $\U$ and let $\mu_a : \U \to [0,1]$ be
its membership function (see \cite{zadeh}).
For every $\alpha \in [0,1]$, consider the $\alpha$-level set
$A^{\ge \alpha} = \{ x\in \U : \mu_A(x) \ge \alpha \}$ and define a set-valued mapping
$F : [0,1] \to \PP(\U)$ by setting $F(\alpha) = A^{\ge \alpha}$ for any $\alpha \in [0,1]$.
Since, for every $x \in A$, we know that $\mu_A(x) = \Sup \{\alpha \in [0,1], \, x \in A^{\ge \alpha} \}
= \Sup \{\alpha \in [0,1], \, x \in F(\alpha) \} $,
ii follows that  every Zadeh's fuzzy set $A$ may be considered as a special case of soft set
and more precisely the soft set $(F, [0, 1])$ over the same universe $\U$.
\end{enumerate}
\end{example}

\begin{definition}{\rm\cite{zorlutuna}}
\label{def:setofsoftsets}
The set of all the soft sets over a universe $\U$ with respect to a set of parameters $\E$
will be denoted by $\SSE$.
\end{definition}

\begin{definition}{\rm\cite{nazmul}}
\label{def:softsubset}
Let $(F,\E),(G,\E) \in \SSE$ be two soft sets over a common universe $\U$
and a common set of parameters $\E$,
we say that $(F,\E)$ is a \df{soft subset} of $(G,\E)$ and we write
$(F,\E) \softsubseteq (G,\E)$
if $F(e)\subseteq G(e)$ for every $e \in \E$.
\end{definition}

\begin{definition}{\rm\cite{nazmul}}
\label{def:softsuperset}
Let $(F,\E),(G,\E) \in \SSE$ be two soft sets over a common universe $\U$
and a common set of parameters $\E$,
we say that $(F,\E)$ is a \df{soft super set} of $(G,\E)$ and we write
$(F,\E) \softsupseteq (G,\E)$
if $(G,\E)$ is a soft subset of $(F,\E)$, i.e. if $(G,\E) \softsubseteq (F,\E)$
\end{definition}

\begin{definition}{\rm\cite{nazmul}}
\label{def:softequal}
Let $(F,\E),(G,\E) \in \SSE$ be two soft sets over a common universe $\U$, we say that
$(F,\E)$ and $(G,\E)$ are \df{soft equal} and we write $(F,\E) \softequal (G,\E)$
if $(F,\E) \softsubseteq (G,\E)$ and $(G,\E) \softsubseteq (F,\E)$.
\end{definition}


\begin{definition}{\rm\cite{nazmul}}
\label{def:nullsoftset}
A soft set $(F,\E)$ over a universe $\U$ is said to be \df{null soft set}
and it is denoted by $\nullsoftset$ if $F(e) = \emptyset$ for every $e \in \E$.
\end{definition}

\begin{definition}{\rm\cite{nazmul}}
\label{def:absolutesoftset}
A soft set $(F,\E) \in \SSE$ over a universe $\U$ is said to be a \df{absolute soft set}
and it is denoted by $\absolutesoftset$
if $F(e) = \U$ for every $e \in \E$.
\end{definition}

Clearly, for every soft set $(F,\E) \in \SSE$, we have
$\nullsoftset \softsubseteq (F,\E) \softsubseteq \absolutesoftset$.


\begin{definition}
\label{def:constantsoftset}
Let $(F,\E) \in \SSE$ be a soft set over a universe $\U$
and $V$ be a nonempty subset of $U$,
the \df{constant soft set} of $V$, denoted by $(\tilde{V},\E)$)
(or, sometimes, by $\tilde{V}$), is the soft set $(\underbar{V},\E)$,
where $\underbar{V}: \E \to \PP(\U)$ is the constant set-valued mapping
defined by $\underbar{V}(e) = V$, for every $e \in \E$.
\end{definition}

\begin{proposition}{\rm\cite{cagman2014}}
\label{pro:propertiessubsets}
For every triplet $(F,\E), (G,\E), (H,\E) \in \SSE$ of soft sets, we have that
$(F,\E) \softsubseteq (G,\E)$ and $(G,\E) \softsubseteq (H,\E)$
imply $(F,\E) \softsubseteq (H,\E)$.
\end{proposition}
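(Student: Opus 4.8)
The plan is to reduce the soft inclusion to ordinary (crisp) set inclusion parameter by parameter, and then appeal to the transitivity of $\subseteq$ on $\PP(\U)$. First I would unfold Definition \ref{def:softsubset}: the hypothesis $(F,\E) \softsubseteq (G,\E)$ means, by definition, that $F(e) \subseteq G(e)$ holds for every $e \in \E$, and likewise the hypothesis $(G,\E) \softsubseteq (H,\E)$ means that $G(e) \subseteq H(e)$ for every $e \in \E$.

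Next I would fix an arbitrary parameter $e \in \E$ and combine the two pointwise containments: from $F(e) \subseteq G(e)$ and $G(e) \subseteq H(e)$, the classical transitivity of set inclusion on $\PP(\U)$ yields $F(e) \subseteq H(e)$. Since $e$ was chosen arbitrarily, this inclusion holds simultaneously for every $e \in \E$.

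Finally, reading Definition \ref{def:softsubset} in the reverse direction, the family of containments $F(e) \subseteq H(e)$ valid for all $e \in \E$ is precisely the meaning of $(F,\E) \softsubseteq (H,\E)$, which is the desired conclusion.

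There is no genuine obstacle here, as the soft subset relation is defined componentwise and the argument never leaves the realm of crisp sets: the only real content is the per-parameter transitivity of ordinary inclusion, which is classical. The single point worth stating with care is the universal quantifier over $\E$, since one must verify the combined inclusion at \emph{every} parameter before re-packaging the result as a soft inclusion.
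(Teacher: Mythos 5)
Your proof is correct: the paper itself states this proposition without proof (it is quoted from \cite{cagman2014}), and your argument --- unfolding Definition \ref{def:softsubset} into the pointwise inclusions $F(e)\subseteq G(e)$ and $G(e)\subseteq H(e)$, applying transitivity of crisp inclusion at each parameter $e\in\E$, and re-packaging into $(F,\E)\softsubseteq(H,\E)$ --- is exactly the standard argument one would supply. Nothing is missing; the care you take with the universal quantifier over $\E$ is the only point of substance, and you handle it properly.
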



\begin{definition}{\rm\cite{nazmul}}
\label{def:softcomplement}
Let $(F,\E) \in \SSE$ be a soft set over a universe $\U$, the \df{soft complement}
(or more exactly the \textit{soft relative complement}) of $(F,\E)$,
denoted with $(F,\E)^\complement$, is the soft set $\left( F^\complement, E \right)$
where $F^\complement : \E \to \PP(\U)$ is the set-valued mapping
defined by $F^\complement(e) = F(e)^\complement = \U \setminus F(e)$, for every $e \in \E$.
\end{definition}

It is routine to show that the soft complement of the null soft set is soft equal to the absolute soft set, i.e.
$\nullsoftset^\complement \, \softequal \, \absolutesoftset$,
that the soft complement of the absolute soft set is soft equal to the null soft set, i.e.
$\absolutesoftset^\complement \, \softequal \, \nullsoftset$,
and that the soft complement of the soft complement of any soft set $(F,\E)$ is soft equal
to the soft set itself, i.e.
$\left( (F,\E)^\complement \right)^\complement \softequal \, (F,\E)$.

\begin{definition}{\rm\cite{nazmul}}
\label{def:softdifference}
Let $(F,\E),(G,\E) \in \SSE$ be two soft sets over a common universe $\U$,
the \df{soft difference} of $(F,\E)$ and $(G,\E)$,
denoted by $(F,\E) \softsetminus (G,\E)$, is the soft set $\left( F \setminus G, E \right)$
where $F \setminus G : \E \to \PP(\U)$ is the set-valued mapping
defined by $(F \setminus G)(e) = F(e) \setminus G(e)$, for every $e \in \E$.
\end{definition}

Clearly, for every soft set $(F,\E) \in \SSE$, it results
$(F,\E)^\complement \, \softequal \, \absolutesoftset \softsetminus (F,\E)$.

\begin{definition}{\rm\cite{nazmul}}
\label{def:softunion}
Let $(F,\E), (G,\E) \in \SSE$ be two soft sets over a universe $\U$,
the \df{soft union} of $(F,\E)$ and $(G,\E)$, denoted with $(F,\E) \softcup (G,\E)$,
is the soft set $\left( F \cup G, E \right)$
where $F \cup G: \E \to \PP(\U)$ is the set-valued mapping
defined by $(F \cup G)(e) = F(e) \cup G(e)$, for every $e \in \E$.
\end{definition}

\begin{definition}{\rm\cite{nazmul}}
\label{def:softintersection}
Let $(F,\E), (G,\E) \in \SSE$ be two soft sets over a universe $\U$,
the \df{soft intersection} of $(F,\E)$ and $(G,\E)$, denoted with $(F,\E) \softcap (G,\E)$,
is the soft set $\left( F \cap G, E \right)$
where $F \cap G: \E \to \PP(\U)$ is the set-valued mapping
defined by $(F \cap G)(e) = F(e) \cap G(e)$, for every $e \in \E$.
\end{definition}

\begin{definition}{\rm\cite{al-khafaj}}
\label{def:softdisjunct}
Two soft sets $(F,\E)$ and $(G,\E)$ over a common universe $\U$
are said to be \df{soft disjoint} if their soft intersection is the soft null set,
i.e. if $(F,\E) \softcap (G,\E) \, \softequal \, \nullsoftset$.
\end{definition}

\begin{remark}
\label{rem:softdisjunction}
It is a simple matter to verify that two soft sets $(F,\E)$ and $(G,\E)$ are soft disjoint
according to Definition \ref{def:softdisjunct}, if and only if every their corresponding
approximations is disjoint, that is $F(e)\cap G(e) = \emptyset$, for every $e \in \E$.
\\
Let us note that in some paper (see, for example, \cite{khattak}) the last equivalent expression
is assumed as definition.
\end{remark}


The soft operators of union, intersection and complement satisfy relations
similar to those of (crisp) set theory, such as the well-known commutative, associative,
distributive, exclusion, contradiction and De Morgan's Laws.

\begin{proposition}{\rm\cite{cagman2014}}
\label{pro:propertiesunionandintersection}
For every soft set $(F,\E) \in \SSE$, we have:
\begin{enumerate}
\item $(F,\E) \softcup (F,\E) \softequal (F,\E)$
\item $(F,\E) \softcup \nullsoftset \softequal (F,\E)$
\item $(F,\E) \softcup \absolutesoftset \softequal \absolutesoftset)$
\item $(F,\E) \softcap (F,\E) \softequal (F,\E)$
\item $(F,\E) \softcap \nullsoftset \softequal \nullsoftset$
\item $(F,\E) \softcap \absolutesoftset \softequal (F,\E)$
\end{enumerate}
\end{proposition}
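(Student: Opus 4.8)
The plan is to reduce each of the six soft identities to a well-known identity of ordinary (crisp) set theory, verified parameter by parameter. The key observation is that every notion entering the statement is defined pointwise over $\E$: by Definition~\ref{def:softunion} and Definition~\ref{def:softintersection} the approximations of $(F,\E)\softcup(G,\E)$ and $(F,\E)\softcap(G,\E)$ at a parameter $e$ are $F(e)\cup G(e)$ and $F(e)\cap G(e)$ respectively; by Definition~\ref{def:nullsoftset} and Definition~\ref{def:absolutesoftset} the null and absolute soft sets send every $e$ to $\emptyset$ and to $\U$; and, unwinding Definition~\ref{def:softequal} through the pointwise condition of Definition~\ref{def:softsubset}, two soft sets are soft equal exactly when their approximations coincide for every $e\in\E$. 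Thus it suffices to fix an arbitrary $e\in\E$ and check the corresponding set-theoretic equality for the approximation $F(e)$.

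First I would dispatch the two idempotency laws (items 1 and 4): for every $e\in\E$ one has $F(e)\cup F(e)=F(e)$ and $F(e)\cap F(e)=F(e)$, which are the idempotency of crisp union and intersection. Next I would treat the laws involving the null soft set (items 2 and 5): since $\nullsoftset$ contributes $\emptyset$ at each parameter, these amount to $F(e)\cup\emptyset=F(e)$ and $F(e)\cap\emptyset=\emptyset$, the identity and annihilation rules for $\emptyset$. Finally I would handle the laws involving the absolute soft set (items 3 and 6): because $F$ is a map into $\PP(\U)$ we always have $F(e)\subseteq\U$, so $\absolutesoftset$ contributing $\U$ at each parameter yields $F(e)\cup\U=\U$ and $F(e)\cap\U=F(e)$, the domination and identity rules for the universe.

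There is no genuine obstacle here: the argument is entirely mechanical once the pointwise character of the definitions is exploited, and the only point demanding a little care is the correct unfolding of soft equality (Definition~\ref{def:softequal}) as the conjunction of two pointwise soft inclusions, so that the whole verification collapses to the six elementary crisp identities above, each holding for every $e\in\E$.
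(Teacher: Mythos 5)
Your proof is correct. The paper states this proposition without proof, quoting it from \cite{cagman2014}, and your parameterwise reduction of each soft identity to the corresponding crisp one (after unwinding soft equality, via Definition \ref{def:softequal} and Definition \ref{def:softsubset}, as equality of the approximations $F(e)$ for every $e \in \E$) is exactly the standard argument one would supply, so there is nothing to add or correct.
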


\begin{proposition}{\rm\cite{cagman2014}}
\label{pro:commutativesoftsets}
For every pair $(F,\E), (G,\E) \in \SSE$ of soft sets, we have:
\begin{enumerate}
\item $(F,\E) \softcup (G,\E) \softequal (G,\E) \softcup (F,\E)$
\item $(F,\E) \softcap (G,\E) \softequal (G,\E) \softcap (F,\E)$
\end{enumerate}
\end{proposition}

\begin{proposition}{\rm\cite{cagman2014}}
\label{pro:associativeanddistributivesoftsets}
For every triplet $(F,\E), (G,\E), (H,\E) \in \SSE$ of soft sets, we have:
\begin{enumerate}
\item $(F,\E) \softcap \left( (G,\E) \softcap (H,\E) \right) \softequal
  \left( (F,\E) \softcap (G,\E) \right) \softcap (H,\E) $
\item $(F,\E) \softcup \left( (G,\E) \softcup (H,\E) \right) \softequal
  \left( (F,\E) \softcup (G,\E) \right) \softcup (H,\E) $
\item $(F,\E) \softcap \left( (G,\E) \softcup (H,\E) \right) \softequal
  \left( (F,\E) \softcap (G,\E) \right) \softcup \left( (F,\E) \softcap (H,\E) \right)$
\item $(F,\E) \softcup \left( (G,\E) \softcap (H,\E) \right) \softequal
  \left( (F,\E) \softcup (G,\E) \right) \softcap \left( (F,\E) \softcup (H,\E) \right)$
\end{enumerate}
\end{proposition}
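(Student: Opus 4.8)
The plan is to reduce each of the four soft identities to the corresponding classical (crisp) set-theoretic law by exploiting the fact that, by Definitions \ref{def:softunion} and \ref{def:softintersection}, both the soft union and the soft intersection are defined approximation by approximation, i.e. parameter-wise. Since by Definition \ref{def:softequal} two soft sets are soft equal precisely when all their corresponding approximations coincide, it will suffice to fix an arbitrary parameter $e \in \E$ and verify that the $e$-approximations of the two sides of each identity agree.

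First I would treat item 1. Applying the definition of soft intersection twice, the $e$-approximation of the left-hand side $(F,\E) \softcap \left( (G,\E) \softcap (H,\E) \right)$ is $F(e) \cap \left( G(e) \cap H(e) \right)$, while that of the right-hand side $\left( (F,\E) \softcap (G,\E) \right) \softcap (H,\E)$ is $\left( F(e) \cap G(e) \right) \cap H(e)$. By the classical associativity of the crisp intersection these two subsets of $\U$ coincide for every $e \in \E$, and hence the two soft sets are soft equal. Item 2 is handled identically, replacing $\softcap$ by $\softcup$ and the crisp $\cap$ by the crisp $\cup$.

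For the distributive laws of items 3 and 4 I would again compute the $e$-approximation of each side. For item 3 the left-hand side yields $F(e) \cap \left( G(e) \cup H(e) \right)$ and the right-hand side yields $\left( F(e) \cap G(e) \right) \cup \left( F(e) \cap H(e) \right)$; these agree by the classical distributivity of intersection over union. Symmetrically, item 4 reduces to the distributivity of union over intersection on each approximation. In every case the conclusion follows from Definition \ref{def:softequal}.

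There is no genuine obstacle here: the whole content is that the pointwise definition of the soft operators lets the classical Boolean identities be lifted verbatim to the soft setting, one parameter at a time. The only point requiring minor care is the bookkeeping in expanding the nested operations, so that each side is written explicitly as the crisp combination of $F(e)$, $G(e)$ and $H(e)$ before invoking the corresponding classical law.
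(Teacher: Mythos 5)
Your proposal is correct. The paper itself gives no proof of this proposition --- it is quoted from \c{C}a\u{g}man's 2014 paper as a known result --- and your parameter-wise reduction (expand both sides at an arbitrary $e \in \E$ via Definitions \ref{def:softunion} and \ref{def:softintersection}, invoke the crisp associative/distributive law, and conclude soft equality from Definitions \ref{def:softsubset} and \ref{def:softequal}) is precisely the standard argument one would supply; there is nothing missing.
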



\begin{proposition}{\rm\cite{neog}}
\label{pro:softexclusionandcontradiction}
For every soft set $(F,\E) \in \SSE$, we have:
\begin{enumerate}
\item $(F,\E) \softcup (F,\E)^\complement \, \softequal \, \absolutesoftset$,
\item $(F,\E) \softcap (F,\E)^\complement \, \softequal \, \nullsoftset$
\end{enumerate}
\end{proposition}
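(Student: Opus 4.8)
The plan is to reduce both soft identities to the corresponding crisp set-theoretic laws, applied parameter by parameter. Since soft equality (Definition \ref{def:softequal}) amounts to the two soft inclusions, and a soft inclusion (Definition \ref{def:softsubset}) is tested at each parameter separately, it suffices to check that the defining set-valued mappings of the two sides coincide on every $e \in \E$; once they literally agree there, both inclusions hold trivially and soft equality follows. Thus the whole argument collapses to unwinding the definitions of the soft operators and invoking two familiar identities in $\PP(\U)$.

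For part 1, I would first unwind the soft complement by Definition \ref{def:softcomplement}, writing $(F,\E)^\complement \softequal (F^\complement,\E)$ with $F^\complement(e) = \U \setminus F(e)$. Then, by the definition of soft union (Definition \ref{def:softunion}), the approximation of $(F,\E) \softcup (F,\E)^\complement$ at a parameter $e$ is $F(e) \cup (\U \setminus F(e))$. The classical law of the excluded middle in $\PP(\U)$ gives $F(e) \cup (\U \setminus F(e)) = \U$ for every $e \in \E$, which is exactly the defining condition of the absolute soft set (Definition \ref{def:absolutesoftset}). Hence $(F,\E) \softcup (F,\E)^\complement \softequal \absolutesoftset$.

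Part 2 is entirely dual: unwinding the soft intersection (Definition \ref{def:softintersection}) yields the approximation $F(e) \cap (\U \setminus F(e))$ at each $e$, and the classical contradiction law gives $F(e) \cap (\U \setminus F(e)) = \emptyset$ for every $e \in \E$, which is the defining condition of the null soft set (Definition \ref{def:nullsoftset}). Therefore $(F,\E) \softcap (F,\E)^\complement \softequal \nullsoftset$.

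I expect no genuine obstacle here, since every soft operator involved is defined purely pointwise, so the proof reduces to the two elementary crisp identities $A \cup (\U \setminus A) = \U$ and $A \cap (\U \setminus A) = \emptyset$ taken with $A = F(e)$. The only care required is bookkeeping, namely confirming that \emph{equal at every parameter} delivers soft equality through Definitions \ref{def:softsubset} and \ref{def:softequal}, rather than any real mathematical difficulty.
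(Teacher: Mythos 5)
Your proof is correct. Note that the paper itself offers no proof of this proposition: it is recalled from the literature (Neog--Sut) as a known fact, so there is no internal argument to compare against. Your pointwise reduction is exactly the standard argument one would supply: since $\softcup$, $\softcap$ and the soft complement are all defined parameter-by-parameter, the approximation of $(F,\E) \softcup (F,\E)^\complement$ at each $e \in \E$ is $F(e) \cup (\U \setminus F(e)) = \U$ and that of $(F,\E) \softcap (F,\E)^\complement$ is $F(e) \cap (\U \setminus F(e)) = \emptyset$, which are precisely the defining conditions of $\absolutesoftset$ and $\nullsoftset$; literal equality of approximations then gives both soft inclusions and hence soft equality. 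No gaps.
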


\begin{proposition}{\rm\cite{zorlutuna}}
\label{pro:softsubset_and_softoperators}
Let $(F,\E), (G,\E) \in \SSE$ be two soft sets over a universe $\U$, then:
\begin{enumerate}
\item $(F,\E) \softsubseteq (G,\E)$ iff $(F,\E) \softcap (G,\E) \softequal (F,\E)$
\item $(F,\E) \softsubseteq (G,\E)$ iff $(F,\E) \softcup (G,\E) \softequal (G,\E)$
\end{enumerate}
\end{proposition}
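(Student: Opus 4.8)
The plan is to reduce both equivalences to the corresponding well-known facts of ordinary (crisp) set theory, applied at each parameter separately, since all the soft relations and operations involved are defined approximation by approximation. Throughout I would unwind the definitions of soft subset (Definition \ref{def:softsubset}), soft union and soft intersection (Definitions \ref{def:softunion} and \ref{def:softintersection}), and soft equality (Definition \ref{def:softequal}), and then appeal to the crisp equivalences that $A \subseteq B$ iff $A \cap B = A$, and $A \subseteq B$ iff $A \cup B = B$.

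For part (1), I would first observe that by Definition \ref{def:softintersection} the soft set $(F,\E) \softcap (G,\E)$ is $(F \cap G,\E)$ with $(F \cap G)(e) = F(e) \cap G(e)$ for every $e \in \E$, and that by Definition \ref{def:softequal} the soft equality $(F,\E) \softcap (G,\E) \softequal (F,\E)$ holds precisely when $(F \cap G)(e) = F(e)$, i.e. when $F(e) \cap G(e) = F(e)$, for every $e \in \E$. On the other hand, by Definition \ref{def:softsubset}, the relation $(F,\E) \softsubseteq (G,\E)$ means exactly that $F(e) \subseteq G(e)$ for every $e \in \E$. Thus both sides of the claimed equivalence are conjunctions, indexed by $e \in \E$, of the crisp conditions $F(e) \subseteq G(e)$ and $F(e) \cap G(e) = F(e)$ respectively; since for crisp sets these two conditions are equivalent, the two families of conditions coincide term by term, and the soft equivalence follows.

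For part (2) the argument is completely parallel, using Definition \ref{def:softunion} in place of Definition \ref{def:softintersection}: the soft equality $(F,\E) \softcup (G,\E) \softequal (G,\E)$ amounts to $F(e) \cup G(e) = G(e)$ for every $e \in \E$, which by the crisp equivalence of $A \subseteq B$ with $A \cup B = B$ holds for each $e$ exactly when $F(e) \subseteq G(e)$, that is, exactly when $(F,\E) \softsubseteq (G,\E)$.

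I do not expect any genuine obstacle here: the proposition is essentially a parameterwise restatement of two standard crisp identities, so the only care needed is the bookkeeping of passing between the soft formulation and the approximations $F(e)$ and $G(e)$. If one prefers to avoid invoking the crisp equivalences as black boxes, each direction can be verified directly at the level of approximations — for instance in part (1), assuming $F(e) \subseteq G(e)$ yields $F(e) \cap G(e) = F(e)$, and conversely $F(e) \cap G(e) = F(e)$ forces $F(e) \subseteq G(e)$ — but this merely re-derives the same elementary facts.
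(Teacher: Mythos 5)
Your proof is correct. Note that the paper itself gives no proof of this proposition — it is quoted from the literature (\cite{zorlutuna}) — so there is no in-paper argument to compare against; your parameterwise reduction, unwinding Definitions \ref{def:softsubset}, \ref{def:softequal}, \ref{def:softintersection} and \ref{def:softunion} so that each side of the equivalence becomes a conjunction over $e \in \E$ of crisp conditions, and then invoking the elementary identities $A \subseteq B \Leftrightarrow A \cap B = A$ and $A \subseteq B \Leftrightarrow A \cup B = B$, is exactly the standard argument, and every step is licensed by the paper's definitions (in particular, soft equality is mutual soft inclusion, which parameterwise is precisely equality of the approximations, as you implicitly use).
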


\begin{proposition}{\rm\cite{shabir}}
\label{pro:demorganslaws}
Let $(F,\E), (G,\E) \in \SSE$ be two soft sets over a universe $\U$, then:
\begin{enumerate}
\item $\left ( (F,\E) \softcup (G,\E) \right)^\complement
  \softequal \, (F,\E)^\complement \, \softcap \, (G,\E)^\complement$
\item $\left ( (F,\E) \softcap (G,\E) \right)^\complement
  \softequal \, (F,\E)^\complement \, \softcup \, (G,\E)^\complement$
\end{enumerate}
\end{proposition}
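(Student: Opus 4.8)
The plan is to reduce both identities to the familiar De Morgan's laws for ordinary (crisp) subsets of $\U$, exploiting the fact that every operation involved is defined parameter by parameter. Since soft equality, by Definition \ref{def:softequal}, amounts to the two mutual soft inclusions, and each soft inclusion is, by Definition \ref{def:softsubset}, just the family of the corresponding crisp inclusions of the approximations, it suffices to prove that the two set-valued mappings defining the soft sets on each side coincide at every parameter $e \in \E$.

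First I would fix an arbitrary parameter $e \in \E$ and unwind the left-hand side of item (1). By Definition \ref{def:softunion} the soft set $(F,\E) \softcup (G,\E)$ is governed by the mapping $e \mapsto F(e) \cup G(e)$, and then by Definition \ref{def:softcomplement} its soft complement evaluates at $e$ to $\U \setminus \left( F(e) \cup G(e) \right)$. On the right-hand side, Definition \ref{def:softcomplement} gives $F^\complement(e) = \U \setminus F(e)$ and $G^\complement(e) = \U \setminus G(e)$, whence by Definition \ref{def:softintersection} the soft set $(F,\E)^\complement \softcap (G,\E)^\complement$ evaluates at $e$ to $\left( \U \setminus F(e) \right) \cap \left( \U \setminus G(e) \right)$.

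At this point the only genuine ingredient is the classical De Morgan identity $\U \setminus (A \cup B) = (\U \setminus A) \cap (\U \setminus B)$, applied with $A = F(e)$ and $B = G(e)$, which yields equality of the two approximations at $e$. Since $e$ was arbitrary, the two set-valued mappings agree everywhere on $\E$, so each of the two required soft inclusions holds and item (1) follows by Definition \ref{def:softequal}. Item (2) is entirely dual: one repeats the same unwinding, now invoking Definitions \ref{def:softintersection} and \ref{def:softunion} together with the companion crisp identity $\U \setminus (A \cap B) = (\U \setminus A) \cup (\U \setminus B)$.

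There is no real obstacle here: the argument is purely mechanical once the parameter-wise definitions are unpacked, and all the mathematical substance is carried by the two crisp De Morgan laws, which we take as known. The only point requiring a modicum of care is to remember that soft equality is defined through two soft inclusions rather than by a single set-theoretic equation; however, establishing the pointwise equality $F(e) = G(e)$ at every $e$ simultaneously delivers both inclusions, so this causes no difficulty.
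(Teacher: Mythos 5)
Your proof is correct: since the soft union, soft intersection and soft complement are all defined parameter-wise (Definitions \ref{def:softunion}, \ref{def:softintersection}, \ref{def:softcomplement}), fixing $e \in \E$ and applying the crisp De Morgan identities to the approximations $F(e)$, $G(e)$ establishes equality of the set-valued mappings at every parameter, which indeed yields both soft inclusions and hence soft equality. The paper itself states this proposition without proof, citing \cite{shabir}, and your parameter-wise reduction to the classical De Morgan laws is precisely the standard argument used there, so nothing further is needed.
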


\begin{proposition}{\rm\cite{shabir}}
\label{pro:differencesoftsets}
Let $(F,\E), (G,\E) \in \SSE$ be two soft sets over a universe $\U$, we have that
$(F,\E) \softsetminus (G,\E) \softequal (F,\E) \softcap (G,\E)^\complement$.
\end{proposition}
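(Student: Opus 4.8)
The plan is to reduce the soft equality to a parameter-wise statement and then invoke the corresponding classical (crisp) set identity. By Definition \ref{def:softequal}, to establish $(F,\E) \softsetminus (G,\E) \softequal (F,\E) \softcap (G,\E)^\complement$ it suffices to show that each side is a soft subset of the other; and by Definition \ref{def:softsubset} this amounts to verifying, for every parameter $e \in \E$, that the two $e$-approximations coincide as ordinary subsets of $\U$. So I would fix an arbitrary $e \in \E$ and simply unfold the relevant definitions on each side.

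First I would compute the left-hand side: by Definition \ref{def:softdifference}, the soft difference $(F,\E) \softsetminus (G,\E)$ is the soft set whose set-valued mapping sends $e$ to $F(e) \setminus G(e)$. For the right-hand side I would proceed in two steps. By Definition \ref{def:softcomplement}, the soft complement $(G,\E)^\complement$ has $e$-approximation $G(e)^\complement = \U \setminus G(e)$ (the relative complement with respect to the universe $\U$); then by Definition \ref{def:softintersection}, the soft intersection $(F,\E) \softcap (G,\E)^\complement$ has $e$-approximation $F(e) \cap \left( \U \setminus G(e) \right)$.

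The only remaining ingredient is the elementary crisp set identity $F(e) \setminus G(e) = F(e) \cap \left( \U \setminus G(e) \right)$, which holds because $F(e) \subseteq \U$, so that removing the points of $G(e)$ from $F(e)$ is the same as intersecting $F(e)$ with the complement of $G(e)$ in $\U$. Since this equality of $e$-approximations holds for every $e \in \E$, the inclusions required in both directions by Definition \ref{def:softsubset} are immediate, and Definition \ref{def:softequal} then yields the claimed soft equality. I do not expect a genuine obstacle here: the statement is a routine parameter-wise verification, and the only point requiring mild care is that the soft complement is understood as the relative complement with respect to $\U$, so that the classical identity applies verbatim to each approximation.
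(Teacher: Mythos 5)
Your proof is correct and is exactly the routine argument one expects here: the paper itself states Proposition \ref{pro:differencesoftsets} without proof (it is recalled from Shabir--Naz), and your parameter-wise unfolding of Definitions \ref{def:softdifference}, \ref{def:softcomplement} and \ref{def:softintersection}, combined with the crisp identity $F(e) \setminus G(e) = F(e) \cap \left( \U \setminus G(e) \right)$ valid because $F(e) \subseteq \U$, is precisely the standard verification. No gaps; the one subtle point you flag, that the soft complement is the relative complement with respect to $\U$, is indeed the only thing requiring care.
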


\begin{proposition}{\rm\cite{cagman2014}}
\label{pro:propertiessoftdifference}
For every soft set $(F,\E) \in \SSE$, we have:
\begin{enumerate}
\item $(F,\E) \softsetminus (F,\E) \softequal \nullsoftset$
\item $(F,\E) \softsetminus \nullsoftset \softequal (F,\E)$
\item $\nullsoftset \softsetminus (F,\E) \softequal \nullsoftset$
\item $(F,\E) \softsetminus \absolutesoftset \softequal \nullsoftset$
\end{enumerate}
\end{proposition}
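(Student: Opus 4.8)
The plan is to treat each of the four identities in the same uniform way, reducing the soft difference to a combination of soft operators whose behaviour has already been settled in the preceding propositions. Concretely, I would first invoke Proposition~\ref{pro:differencesoftsets} to rewrite every soft difference $(F,\E) \softsetminus (G,\E)$ as the soft intersection $(F,\E) \softcap (G,\E)^\complement$, and then finish each case by applying one of the already-established intersection identities. The only auxiliary facts I need beyond those propositions are the two complement computations recorded (unnumbered) just after Definition~\ref{def:softcomplement}, namely $\nullsoftset^\complement \softequal \absolutesoftset$ and $\absolutesoftset^\complement \softequal \nullsoftset$. I prefer this route over a direct parameterwise element-chase because it reuses the machinery of the section rather than repeating it.

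Carrying this out, item~(1) follows from Proposition~\ref{pro:differencesoftsets} as $(F,\E) \softsetminus (F,\E) \softequal (F,\E) \softcap (F,\E)^\complement$, which is $\nullsoftset$ by the contradiction law, Proposition~\ref{pro:softexclusionandcontradiction}(2). For item~(2), $(F,\E) \softsetminus \nullsoftset \softequal (F,\E) \softcap \nullsoftset^\complement \softequal (F,\E) \softcap \absolutesoftset$, which equals $(F,\E)$ by Proposition~\ref{pro:propertiesunionandintersection}(6). For item~(3), $\nullsoftset \softsetminus (F,\E) \softequal \nullsoftset \softcap (F,\E)^\complement$; here I would use commutativity of soft intersection (Proposition~\ref{pro:commutativesoftsets}(2)) to rewrite this as $(F,\E)^\complement \softcap \nullsoftset$ and then apply Proposition~\ref{pro:propertiesunionandintersection}(5) to get $\nullsoftset$. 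Finally, item~(4) gives $(F,\E) \softsetminus \absolutesoftset \softequal (F,\E) \softcap \absolutesoftset^\complement \softequal (F,\E) \softcap \nullsoftset$, again $\nullsoftset$ by Proposition~\ref{pro:propertiesunionandintersection}(5). Each chain is a two- or three-step rewriting justified entirely by cited results.

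Honestly, there is no genuine obstacle here: all four statements are immediate once the soft difference is expressed through complement and intersection, and the computations are purely formal. The only points requiring a little care are bookkeeping ones, namely correctly pairing each case with the right complement identity ($\nullsoftset^\complement \softequal \absolutesoftset$ for item~(2) versus $\absolutesoftset^\complement \softequal \nullsoftset$ for item~(4)) and remembering to commute the intersection in item~(3) so that Proposition~\ref{pro:propertiesunionandintersection}(5) applies in the stated left-to-right form. As an alternative, one could bypass Proposition~\ref{pro:differencesoftsets} entirely and argue parameterwise from Definition~\ref{def:softdifference}, using the elementary crisp identities $F(e)\setminus F(e)=\emptyset$, $F(e)\setminus\emptyset=F(e)$, $\emptyset\setminus F(e)=\emptyset$, and $F(e)\setminus\U=\emptyset$ for every $e\in\E$, and then concluding soft equality via Definition~\ref{def:softequal}; this is equally valid but less in keeping with the operator-level style of the section.
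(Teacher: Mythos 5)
Your proposal is correct, but there is nothing in the paper to compare it against: Proposition~\ref{pro:propertiessoftdifference} is stated as a recalled result, cited from \cite{cagman2014}, and the paper gives no proof of it at all (as with the other background propositions of Section~2). Judged on its own merits, your operator-level derivation is sound and, importantly, free of circularity: every ingredient you invoke --- Proposition~\ref{pro:differencesoftsets} rewriting $(F,\E) \softsetminus (G,\E)$ as $(F,\E) \softcap (G,\E)^\complement$, the unnumbered complement identities $\nullsoftset^\complement \softequal \absolutesoftset$ and $\absolutesoftset^\complement \softequal \nullsoftset$, the contradiction law Proposition~\ref{pro:softexclusionandcontradiction}(2), the identities Proposition~\ref{pro:propertiesunionandintersection}(5)--(6), and commutativity Proposition~\ref{pro:commutativesoftsets}(2) --- appears in the paper strictly before the statement being proved. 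Your bookkeeping is also right where it matters: in item (3) the null soft set sits on the left of the intersection, so the commutation step (or, equivalently, applying Proposition~\ref{pro:propertiesunionandintersection}(5) to the soft set $(F,\E)^\complement$ after swapping) is genuinely needed to use the cited identity in its stated form. The parameterwise alternative you sketch at the end, via $F(e)\setminus F(e)=\emptyset$, $F(e)\setminus\emptyset=F(e)$, $\emptyset\setminus F(e)=\emptyset$, $F(e)\setminus\U=\emptyset$ and Definition~\ref{def:softequal}, is equally valid and is presumably closest to how the original source argues; either route would be acceptable as a proof to insert here.
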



The notions of soft union and intersection admits a obvious generalization to a family
with any number of soft sets.

\begin{definition}{\rm\cite{nazmul}}
\label{def:generalizedsoftunion}
Let $\left\{(F_i,\E) \right\}_{i\in I} \subseteq \SSE$ be a nonempty subfamily
of soft sets over a universe $\U$,
the (generalized) \df{soft union} of $\left\{(F_i,\E) \right\}_{i\in I}$,
denoted with $\softbigcup_{i \in I} (F_i,\E) $,
is defined by $\left(\bigcup_{i \in I} F_i, \E \right)$
where $\bigcup_{i \in I} F_i: \E \to \PP(\U)$ is the set-valued mapping
defined by $\left(\bigcup_{i \in I} F_i\right)(e) = \bigcup_{i \in I} F_i(e)$, for every $e \in \E$.
\end{definition}

\begin{definition}{\rm\cite{nazmul}}
\label{def:generalizedsoftintersection}
Let $\left\{(F_i,\E) \right\}_{i\in I} \subseteq \SSE$ be a nonempty subfamily
of soft sets over a universe $\U$,
the (generalized) \df{soft intersection} of $\left\{(F_i,\E) \right\}_{i\in I}$,
denoted with $\softbigcap_{i \in I} (F_i,\E) $,
is defined by $\left(\bigcap_{i \in I} F_i, E \right)$
where $\bigcap_{i \in I} F_i: \E \to \PP(\U)$ is the set-valued mapping
defined by $\left(\bigcap_{i \in I} F_i\right)(e) = \bigcap_{i \in I} F_i(e)$, for every $e \in \E$.
\end{definition}

\begin{proposition}
\label{pro:intersectionanduniongeneralized}
Let $\left\{(F_i,\E) \right\}_{i\in I} \subseteq \SSE$ be a nonempty subfamily
of soft sets over a universe $\U$, the for every $i \in I$, we have that:
$\softbigcap_{i \in I} (F_i,\E) \, \softsubseteq \, (F_i,\E)
\, \softsubseteq \, \softbigcup_{i \in I} (F_i,\E) $.
\end{proposition}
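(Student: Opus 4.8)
The plan is to reduce the two soft inclusions to the corresponding pointwise (crisp) inclusions between approximations, exploiting the fact that the soft subset relation of Definition~\ref{def:softsubset} is defined parameter by parameter. So the whole statement is really a transport of a classical set-theoretic fact to the soft setting.

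First I would fix an arbitrary index $i \in I$. To establish the soft inclusion $\softbigcap_{j \in I} (F_j,\E) \softsubseteq (F_i,\E)$ together with $(F_i,\E) \softsubseteq \softbigcup_{j \in I} (F_j,\E)$, Definition~\ref{def:softsubset} tells us that it suffices to verify, for every parameter $e \in \E$, the two inclusions between subsets of $\U$ given by the corresponding approximations. Next, for such a fixed $e$, I would rewrite the approximations of the generalized soft intersection and union by means of Definition~\ref{def:generalizedsoftintersection} and Definition~\ref{def:generalizedsoftunion}, which respectively give $\left(\bigcap_{j \in I} F_j\right)(e) = \bigcap_{j \in I} F_j(e)$ and $\left(\bigcup_{j \in I} F_j\right)(e) = \bigcup_{j \in I} F_j(e)$. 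At this point the two required inclusions become precisely $\bigcap_{j \in I} F_j(e) \subseteq F_i(e) \subseteq \bigcup_{j \in I} F_j(e)$, which are the well-known classical inclusions relating an intersection (resp.\ a union) of the family $\{F_j(e)\}_{j \in I}$ with any of its members.

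I do not expect any genuine obstacle here: the only content is the elementary set-theoretic fact that an intersection is contained in each term and each term is contained in the union, and the soft layer contributes nothing beyond the observation that $\softsubseteq$ is checked coordinatewise. Since both $i \in I$ and $e \in \E$ were arbitrary, the two chains of inclusions hold for every index, and the claimed double inclusion follows at once.
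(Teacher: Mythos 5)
Your proof is correct: reducing both soft inclusions to the parameterwise crisp inclusions $\bigcap_{j \in I} F_j(e) \subseteq F_i(e) \subseteq \bigcup_{j \in I} F_j(e)$ via Definitions~\ref{def:softsubset}, \ref{def:generalizedsoftintersection} and~\ref{def:generalizedsoftunion} is exactly the intended argument. The paper states this proposition without proof, treating it as routine, and your write-up supplies precisely that routine coordinatewise verification.
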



Propositions \ref{pro:associativeanddistributivesoftsets}(iii)-(iv) and \ref{pro:demorganslaws}
can be easily extended to arbitrary union and arbitrary intersection.

\begin{proposition}
\label{pro:generalizeddistributive}
Let respectively $(F,\E) \in \SSE$ be a soft set
and $\left\{(G_i,\E) \right\}_{i\in I} \subseteq \SSE$ be a nonempty subfamily
of soft sets over a common universe $\U$, we have:
\begin{enumerate}
\item $(F,\E) \softcap \left( \softbigcup_{i \in I} (G_i,\E) \right) \softequal \,
\softbigcup_{i \in I} \left( (F,\E) \softcap (G_i,\E) \right)$
\item $(F,\E) \softcup \left( \softbigcap_{i \in I} (G_i,\E) \right) \softequal \,
\softbigcap_{i \in I} \left( (F,\E) \softcup (G_i,\E) \right)$
\end{enumerate}
\end{proposition}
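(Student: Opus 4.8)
The plan is to reduce both identities to the classical (crisp) generalized distributive laws by working approximation-wise, since soft equality (Definition \ref{def:softequal}) amounts to equality of all the approximations and each soft operation involved is defined parameter-by-parameter.

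First I would fix an arbitrary parameter $e \in \E$ and compute the $e$-approximation of each side of (i). Using Definitions \ref{def:softintersection} and \ref{def:generalizedsoftunion}, the left-hand side gives
\[
\left( F \cap \bigcup_{i\in I} G_i \right)(e) = F(e) \cap \left( \bigcup_{i\in I} G_i \right)(e) = F(e) \cap \bigcup_{i\in I} G_i(e),
\]
while the right-hand side gives
\[
\left( \bigcup_{i\in I} (F \cap G_i) \right)(e) = \bigcup_{i\in I} (F \cap G_i)(e) = \bigcup_{i\in I} \bigl( F(e) \cap G_i(e) \bigr).
\]
By the ordinary generalized distributive law $A \cap \bigcup_{i} B_i = \bigcup_{i} (A \cap B_i)$ applied with $A = F(e)$ and $B_i = G_i(e)$, these two subsets of $\U$ coincide. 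Since $e$ was arbitrary, the two soft sets have the same approximation for every parameter, hence they are soft equal, which proves (i).

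For (ii) I would proceed dually, again fixing $e \in \E$ and using Definitions \ref{def:softunion} and \ref{def:generalizedsoftintersection}: the left-hand side has $e$-approximation $F(e) \cup \bigcap_{i\in I} G_i(e)$ and the right-hand side has $e$-approximation $\bigcap_{i\in I} \bigl( F(e) \cup G_i(e) \bigr)$, and these agree by the dual crisp distributive law $A \cup \bigcap_{i} B_i = \bigcap_{i} (A \cup B_i)$. Alternatively, (ii) follows from (i) by taking soft complements of both members and invoking the arbitrary-family version of the soft De Morgan laws (the generalization of Proposition \ref{pro:demorganslaws} announced just before the statement), together with the involutivity of the soft complement.

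There is no genuine obstacle here: the entire argument is a transcription of the set-theoretic distributive laws through the parameter-wise definition of the soft operations. The only point requiring a little care is to confirm that the generalized soft union and soft intersection commute with passing to a fixed approximation, which is precisely what Definitions \ref{def:generalizedsoftunion} and \ref{def:generalizedsoftintersection} assert; once that bookkeeping is made explicit, both identities are immediate.
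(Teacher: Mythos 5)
Your proof is correct: the parameter-wise reduction to the crisp generalized distributive laws, using that every soft operation involved is defined approximation-by-approximation and that soft equality means equality of all approximations, is exactly the argument intended here. The paper itself gives no proof of this proposition (it is stated as an \emph{easy extension} of the finite cases in Propositions \ref{pro:associativeanddistributivesoftsets} and \ref{pro:demorganslaws}), so your write-up simply makes explicit the standard verification; the only caveat is that your alternative route for (ii) via the generalized De Morgan laws invokes Proposition \ref{pro:generalizeddemorganlaws}, which the paper states only \emph{after} this one, so the direct dual computation you give first is the one to prefer.
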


\begin{proposition}
\label{pro:generalizeddemorganlaws}
Let $\left\{(F_i,\E) \right\}_{i\in I} \subseteq \SSE$ be a nonempty subfamily
of soft sets over a universe $\U$, it results:
\begin{enumerate}
\item $\left( \softbigcup_{i \in I} (F_i,\E) \right)^\complement \softequal \,
\softbigcap_{i \in I} (F_i,\E)^\complement $
\item $\left( \softbigcap_{i \in I} (F_i,\E) \right)^\complement \softequal \,
\softbigcup_{i \in I} (F_i,\E)^\complement $
\end{enumerate}
\end{proposition}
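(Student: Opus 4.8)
The plan is to reduce both identities to the classical (generalized) De Morgan's laws for crisp sets, exploiting the fact that every soft operation involved here is defined parameter-by-parameter and that soft equality (Definition \ref{def:softequal}) amounts to equality of the corresponding approximations at each single parameter. Thus it suffices to fix an arbitrary $e \in \E$ and verify that the $e$-approximations of the two sides coincide; since $e$ is arbitrary, the soft sets are then soft equal.

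For part (i), I would first unwind the left-hand side at $e$: by Definition \ref{def:generalizedsoftunion} the $e$-approximation of $\softbigcup_{i \in I}(F_i,\E)$ is $\bigcup_{i\in I} F_i(e)$, and then by Definition \ref{def:softcomplement} the $e$-approximation of its soft complement is $\U \setminus \bigcup_{i\in I} F_i(e)$. Next I would unwind the right-hand side at $e$: by Definition \ref{def:softcomplement} each $(F_i,\E)^\complement$ has $e$-approximation $\U \setminus F_i(e)$, and by Definition \ref{def:generalizedsoftintersection} the $e$-approximation of $\softbigcap_{i\in I}(F_i,\E)^\complement$ is $\bigcap_{i\in I}(\U\setminus F_i(e))$. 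The two sides then agree at $e$ precisely by the crisp De Morgan identity $\U\setminus\bigcup_{i\in I} F_i(e) = \bigcap_{i\in I}(\U\setminus F_i(e))$, which holds for an arbitrary index set $I$.

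Part (ii) follows by the completely dual computation, invoking the companion crisp identity $\U\setminus\bigcap_{i\in I} F_i(e)=\bigcup_{i\in I}(\U\setminus F_i(e))$ in place of the first. Alternatively, it can be obtained from part (i) applied to the family $\{(F_i,\E)^\complement\}_{i\in I}$, together with the involutivity of the soft complement recorded just after Definition \ref{def:softcomplement}.

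I expect no genuine obstacle here: the entire content sits in the classical set-theoretic De Morgan's laws, and the only point requiring care is the bookkeeping of unwinding the generalized soft union, the generalized soft intersection, and the soft complement in the correct order at a fixed parameter. This mirrors exactly the strategy by which the finite laws of Proposition \ref{pro:demorganslaws} are lifted from crisp sets, the sole change being that a two-element index set is replaced by an arbitrary $I$, which the crisp laws accommodate without difficulty.
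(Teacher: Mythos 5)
Your proof is correct and matches the paper's intent: the paper states this proposition without proof, presenting it merely as the routine extension of the finite De Morgan's laws (Proposition \ref{pro:demorganslaws}) to arbitrary index sets, and your parameter-by-parameter reduction to the crisp generalized De Morgan identities is exactly that routine verification. The alternative derivation of part (ii) from part (i) via involutivity of the soft complement is also valid, since the paper records that fact immediately after Definition \ref{def:softcomplement}.
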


Despite the name, soft sets are not real sets since they defined by means of a set-valued mapping.
For such a reason, their original definition lacked the concept of point.
In 2015, Xie \cite{xie} introduced the notion of soft point and study some relationships between
soft points and soft sets, finding in particular that soft sets
can be converted into ordinary sets of soft points so that we may conveniently deal with soft relations,
soft operations and so on.

\begin{definition}{\rm\cite{xie}}
\label{def:softpoint}
A soft set $(F,\E) \in \SSE$ over a universe $\U$ is said to be a \df{soft point} over $U$
if it has only one non-empty approximation and it is a singleton,
i.e. if there exists some parameter $\alpha \in E$
and an element $p \in \U$ such that
$F(\alpha) = \{ p \}$ and $F(e)=\emptyset$ for every $e \in E \setminus \{ \alpha \}$.
Such a soft point is usually denoted with $(p_\alpha, \E)$.
The singleton $\{ p \}$ is called the \textit{support set} of the soft point
and $\alpha$ is called the \textit{expressive parameter} of $(p_\alpha, \E)$.
\end{definition}

\begin{remark}
Let us observe that the soft point notation $(p_\alpha, \E)$ maintains
and makes immediately recognizable both the salient information,
that is the value of the parameter and that of the point itself.
Every reference to the set-valued mapping from which it derives is completely superfluous
since it has only one non-empty value $\{p\}$ corresponding to the parameter $\alpha$.
\\
In other words, a soft point $(p_\alpha, \E)$ is a soft set corresponding to
the set-valued mapping $p_\alpha : \E \to \mathbb(U)$ that, for any $e \in \E$, is defined by
$$ p_\alpha(e) =
\left\{
\begin{array}{ll}
\{ p \} & \mbox{ if } e = \alpha \\
\emptyset & \mbox{ if } e \in \E \setminus \{\alpha \} \\
\end{array}
\right.
$$
\end{remark}

\begin{example}
\label{ex:soft_point}
Let $\U = \{h_l, h_2, h_3, h_4, h_5, h_6 \}$ the universe set
and $\E = \{e_1, e_2, e_3, e_4, e_5 \}$ the set of decision parameters
as in the Example \ref{ex:soft_set}(iii).
Then, if we consider the fixed value $p= h_5 \in \U$, the fixed value $\alpha = e_2 \in \E$
and the set-valued mapping $p_\alpha : \E \to \PP(\U)$  defined by setting
$p_\alpha(e_1) = \emptyset$, $p_\alpha(e_2) = \{ p \} = \{ h_5 \} $, $p_\alpha(e_3) = \emptyset$,
$p_\alpha(e_4) = \emptyset$ and $p_\alpha(e_5) = \emptyset$,
the soft set $(p_\alpha, \E)$ (or more precisely $\left( {h_5}_{e_2} , \E \right)$) is a soft point.
\end{example}

\begin{definition}{\rm\cite{xie}}
\label{def:setofsoftpoints}
The set of all the soft points over a universe $\U$ with respect to a set of parameters $\E$
will be denoted by $\SPE$.
\end{definition}

\begin{definition}{\rm\cite{xie}}
\label{def:softpointsoftbelongstosoftset}
Let $(p_\alpha, \E) \in \SPE$ and $(F,\E) \in \SSE$
respectively be a soft point and a softset over a common universe $\U$.
We say that \df{the soft point $(p_\alpha, \E)$ soft belongs to the soft set $(F,\E)$}
and we write $(p_\alpha, \E) \softin (F,\E)$, if the soft point is a soft subset of the soft set,
i.e. if $(p_\alpha, \E) \softsubseteq (F,\E)$
and hence if $p \in F(\alpha)$.
\end{definition}

\begin{definition}{\rm\cite{das}}
\label{def:equalitysoftpoints}
Let $(p_\alpha, \E), (q_\beta, \E) \in \SPE$ be two soft points over a common universe $\U$,
we say that $(p_\alpha, \E)$ and $(q_\beta, \E)$ are \df{soft equal},
and we write $(p_\alpha, \E) \softequal (q_\beta, \E)$,
if they are equals as soft sets and hence if $p = q$ and $\alpha = \beta$.
\end{definition}

\begin{definition}{\rm\cite{das}}
\label{def:distinctssoftpoints}
We say that two soft points $(p_\alpha, \E)$ and $(q_\beta, \E)$ are \df{soft distincts},
and we write $(p_\alpha, \E) \tilde{\ne} (q_\beta, \E)$,
if and only if $p\ne q$ or $\alpha \ne \beta$.
\end{definition}

In the special case in which the soft points $p \in F(\alpha)$ and $q \in F(\alpha)$
are defined respect to the same expressive parameter $\alpha$ it is obvious that
they are soft equal if and only if $p = q$
and soft distincts if and only if $p\ne q$.

\begin{remark}
\label{rem:noteonsoftdistinctstpoints}
In some papers (see, for example \cite{khattak} and \cite{khattak2018}),
using a different notation, two soft points $(p_\alpha, \E)$ and $(q_\beta, \E)$
satisfying Definition \ref{def:distinctssoftpoints}
are said "disjoint" but for uniformity of language, it seems to us
more appropriated to define them as distinct.
\end{remark}

\begin{proposition}{\rm\cite{das}}
\label{pro:softsetasunionofsoftpoint}
Any soft set $(F,\E) \in \SSE$ over a universe $\U$ can be represented
as soft union of all its soft points, i.e.
$$ (F,\E) = \softbigcup \left\{ (p_\alpha, \E) : \, (p_\alpha, \E) \softin (F,\E) \right\} .$$
\end{proposition}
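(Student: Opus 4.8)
The plan is to establish the claimed identity by comparing the two sides approximation by approximation. By Definitions \ref{def:softequal} and \ref{def:softsubset}, two soft sets are soft equal if and only if their set-valued mappings coincide at every parameter; hence it suffices to check that, for each $e \in \E$, the $e$-approximation of the generalized soft union on the right-hand side equals $F(e)$.

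First I would make the index family explicit. By Definition \ref{def:softpointsoftbelongstosoftset}, a soft point $(p_\alpha,\E) \in \SPE$ satisfies $(p_\alpha,\E) \softin (F,\E)$ precisely when $p \in F(\alpha)$. Hence the family whose soft union we form is
$$\left\{ (p_\alpha,\E) : \alpha \in \E,\ p \in F(\alpha) \right\}.$$
Fixing now an arbitrary parameter $e \in \E$ and applying Definition \ref{def:generalizedsoftunion}, the $e$-approximation of the soft union is
$$\bigcup_{\substack{\alpha \in \E \\ p \in F(\alpha)}} p_\alpha(e).$$
Recall from Definition \ref{def:softpoint} that $p_\alpha(e) = \{p\}$ when $e = \alpha$ and $p_\alpha(e) = \emptyset$ otherwise. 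Thus in the union above only the terms indexed by $\alpha = e$ contribute a nonempty set, and each such term contributes exactly $\{p\}$ for some $p \in F(e)$. Consequently the union collapses to $\bigcup_{p \in F(e)} \{p\} = F(e)$, which is exactly what we wanted; since $e$ was arbitrary, the two set-valued mappings agree everywhere and the soft equality follows.

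The main obstacle -- indeed essentially the only point requiring care -- is the bookkeeping that isolates, for each fixed $e$, exactly those soft points of the family whose expressive parameter equals $e$: this is what makes the double union over pairs $(\alpha,p)$ reduce to the single union over $p \in F(e)$. A secondary subtlety is the degenerate case in which $(F,\E) \softequal \nullsoftset$: then no soft point soft belongs to $(F,\E)$, the index family is empty, and Definition \ref{def:generalizedsoftunion} (which presupposes a nonempty family) does not directly apply; one handles this by the usual convention that the empty soft union is the null soft set $\nullsoftset$, so that the identity persists in this limiting case as well.
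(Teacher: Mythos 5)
Your proof is correct. Note that the paper does not actually prove this proposition---it is recalled from Das and Samanta \cite{das} without proof---so there is no in-paper argument to compare against; your parameterwise verification (reduce soft equality to $F(e)=G(e)$ for every $e\in\E$, observe that only the soft points with expressive parameter $\alpha=e$ contribute to the $e$-approximation of the union, and collapse $\bigcup_{p\in F(e)}\{p\}=F(e)$) is exactly the standard argument one would give, and your handling of the degenerate case $(F,\E)\softequal\nullsoftset$ via the empty-union convention is a careful touch, since Definition \ref{def:generalizedsoftunion} is stated only for nonempty families.
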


A different notion of membership, used in particular, for defining soft separation axioms
is given in \cite{shabir} by Shabir and Naz.

\begin{definition}{\rm\cite{shabir}}
\label{def:pointbelongstosoftset}
Let $(F,\E) \in \SSE$ be a soft set over a universe $\U$ and $p \in \U$.
We say that \df{the point $p$ belongs to the soft set $(F,\E)$} and we write that
$p \in (F,\U)$ if $p \in F(e)$, for every $e \in \E$.
\end{definition}

By the above Definition, it is immediately clear that
$p \notin (F,\E)$ if there exists some $\alpha \in E$ such that $p \notin F(\alpha)$.

\begin{remark}
\label{rem::pointbelongstosoftset}
Let us note that a (ordinary) point $p \in \U$ belongs to a soft set $(F,\E) \in \SSE$ if and only if
every corresponding soft point $(p_e, \E)$ relative to the same support set $\{ p \}$
and any expressive parameter $e \in \E$ soft belongs to the soft set, i.e. that
$p \in (F,\E)$ if and only if $(p_e, \E) \softin (F,\E)$ for every $e \in \E$.
Thus, the soft membership $(p_\alpha, \E) \softin (F,\E)$ is a weaker condition than
to the ordinary membership $p \in (F,\E)$.
\end{remark}

\begin{definition}{\rm\cite{hussain}}
\label{def:subsoftset}
Let $(F,\E) \in \SSE$ be a soft set over a universe $\U$
and $V$ be a nonempty subset of $\U$,
the \df{sub soft set} of $(F,\E)$ over $V$, is the soft set $(\rl[V] F,\E)$,
where $\rl[V] F: \E \to \PP(\U)$ is the set-valued mapping
defined by $\rl[V] F(e) = F(e) \cap V$, for every $e \in \E$.
\end{definition}

\begin{remark}
\label{rem:subsoftset}
Using Definitions \ref{def:constantsoftset} and \ref{def:softintersection},
it is a trivial matter to verify that
a sub soft set of $(F,\E)$ over $V$ can also be expressed as
$(\rl[V] F,\E) = (F,\E) \softcap (\tilde{V}, \E)$.
\end{remark}

\begin{example}{\rm\cite{hussain}}
\label{ex:sub_soft_set}
Let $\U = \{h_l, h_2, h_3, h_4, h_5, h_6 \}$ the universe set,
$\E = \{e_1, e_2, e_3, e_4, e_5 \}$ the set of decision parameters
as in the Example \ref{ex:soft_set}(i)
and consider the soft set $(F,\E)$ defined by
$$
\begin{array}{lll}
\hspace{8mm} F(e_1)= \{ h_2 , h_4 \},
\hspace{10mm} & F(e_2) = \{ h_1, h_3, h_4, h_6 \},
\hspace{6mm} & F(e_3) = \{ h_2, h_3, h_6 \}, \\
\hspace{8mm} F(e_4)= \{ h_1, h_3, h_5, h_6 \},
& F(e_5) = \{ h_2, h_3, h_4, h_6 \} . &
\end{array}
$$
Now, if we consider the subset $V = \{ h_1, h_2, h_3, h_4 \}$ of the universe $U$, the sub soft set
$(\rl[V] F,\E)$ of $(F,\E)$ over $V$ results defined by
$$
\begin{array}{lll}
\hspace{2mm} \rl[V] F(e_1)= \{ h_2 , h_4 \},
\hspace{12mm} & \rl[V] F(e_2) = \{ h_1, h_3, h_4 \},
\hspace{10mm} & \rl[V] F(e_3) = \{ h_2, h_3 \}, \\
\hspace{2mm} \rl[V] F(e_4)= \{ h_1, h_3 \},
& \rl[V] F(e_5) = \{ h_2, h_3, h_4 \} . &
\end{array}
$$
\end{example}

\section{Soft Topological Spaces}
The notion of soft topological spaces as topological spaces defined over a initial universe
with a fixed set of parameters was introduced in 2011 by Shabir and Naz \cite{shabir}.

\begin{definition}{\rm\cite{shabir}}
\label{def:softtopology}
Let $X$ be an initial universe set, $\E$ be a nonempty set of parameters with respect to $X$
and $\Tau \subseteq \SSE[X]$ be a family of soft sets over $X$, we say that
$\Tau$ is a \df{soft topology} on $X$ with respect to $\E$ if the following four conditions are satisfied:
\begin{enumerate}
\item the null soft set belongs to $\Tau$, i.e. $\nullsoftset \in \Tau$
\item the absolute soft set belongs to $\Tau$, i.e. $\absolutesoftset[X] \in \Tau$
\item the soft intersection of any two soft sets of $\Tau$ belongs to $\Tau$, i.e.
for every $(F,\E), (G,\E) \in \Tau$ then $(F,\E) \softcap (G,\E) \in \Tau$.
\item the soft union of any subfamily of soft sets in $\Tau$ belongs to $\Tau$, i.e.
for every $\left\{(F_i,\E) \right\}_{i\in I} \subseteq \Tau$ then
$\softbigcup_{i \in I} (F_i,\E) \in \Tau$
\end{enumerate}
The triplet $(X, \Tau, \E)$ is called a \df{soft topological space} over $X$ with respect to $\E$.
\\
In some case, when it is necessary to better specify the universal set and the set of parameters,
the topology will be denoted by $\Tau(X,\E)$.
\end{definition}

It is a trivial fact to verify that condition \textit{(iii)} of Definition \ref{def:softtopology}
is equivalent to state that the soft intersection of any finite number of soft sets of $\Tau$ belongs to $\Tau$,
i.e. that for every $\left\{(F_i,\E) \right\}_{i=1,\ldots n} \subseteq \Tau$ then
$\softbigcap_{i=1}^n (F_i,\E) \in \Tau$.

\begin{definition}{\rm\cite{shabir}}
\label{def:softopenset}
Let $(X,\Tau,\E)$ be a soft topological space over $X$ with respect to $\E$,
then the members of $\Tau$ are said to be \df{soft open set} in $X$.
\end{definition}

\begin{example}
\label{ex:soft_topology}
For the sake of clarity, here are some examples of soft set families that form a soft topology or not.
\begin{enumerate}
\item {\rm\cite{shabir}}
Let $X$ be an initial universe set, $\E$ be the set of parameters.
If we consider the family of soft sets defined by
$\Tau_b = \left\{ \nullsoftset, \absolutesoftset[X] \right\}$,
then $\Tau_b$ is a soft topology.
We say that $\Tau_b$ is the \df{soft indiscrete topology} (or soft trivial topology) on $X$
and we call $(X,\Tau_b,\E)$ the \df{soft indiscrete topological space}
(or soft trivial topological space) over $X$.

\item {\rm\cite{shabir}}
Let $X$ be an initial universe set, $\E$ be the set of parameters.
If we consider the family $\Tau_d = \SSE[X]$ of all soft sets over $X$,
then $\Tau_d$ is a soft topology.
We say that $\Tau_d$ is the \df{soft discrete topology} on $X$
and we call $(X,\Tau_d,\E)$ the \df{soft discrete topological space} over $X$.

\item Let $X= \{ h_1, h_2, h_3 \}$ be the universe set and $\E = \{ e_1, e_2 \}$ be the set of parameters.
Consider the family of soft sets $\Tau = \left\{ \nullsoftset, \absolutesoftset[X],
(F_1,\E), (F_2,\E), (F_3,\E), (F_4,\E) \right\}$ where the soft sets $(F_i,\E)$ (with $i=1,\ldots 4$)
over $X$ are defined by setting:
$$
\begin{array}{ll}
F_1(e_1)= \{ h_2 \}, \qquad & F_1(e_2) = \{ h_1 \}, \\
F_2(e_1)= \{ h_1, h_2 \},  & F_2(e_2) = \{ h_1 \}, \\
F_3(e_1)= \{ h_1, h_2 \},  & F_3(e_2) = \{ h_1, h_3 \}, \\
F_4(e_1)= \{ h_1, h_2 \},  & F_4(e_2) = X .
\end{array}
$$
Since, it is a simple routine to verify that all non-trivial unions and intersections
of the members of the family $\Tau$ still belong to $\Tau$ and, more exactly that
$$
\begin{array}{lll}
(F_1,\E) \softcup (F_2,\E) = (F_2,\E), \quad &
(F_1,\E) \softcup (F_3,\E) = (F_3,\E), \quad &
(F_1,\E) \softcup (F_4,\E) = (F_4,\E) \\
(F_2,\E) \softcup (F_3,\E) = (F_3,\E), &
(F_2,\E) \softcup (F_4,\E) = (F_4,\E), &
(F_3,\E) \softcup (F_4,\E) = (F_4,\E), \\
(F_1,\E) \softcap (F_2,\E) = (F_1,\E), &
(F_1,\E) \softcap (F_3,\E) = (F_1,\E), &
(F_1,\E) \softcap (F_4,\E) = (F_1,\E), \\
(F_2,\E) \softcap (F_3,\E) = (F_2,\E), &
(F_2,\E) \softcap (F_4,\E) = (F_2,\E), &
(F_3,\E) \softcap (F_4,\E) = (F_3,\E) .
\end{array}
$$
by Definition \ref{def:softtopology} it follows $\Tau$ is a soft topology on $X$
and hence that $(X,\Tau,\E)$ is a (finite) soft topological space over $X$.

\item {\rm\cite{hussain}}
Let $X= \{ h_1, h_2, h_3 \}$ be the universe set and $\E = \{ e_1, e_2 \}$ be the set of parameters.
Consider the family of soft sets $\Acal = \left\{ \nullsoftset, \absolutesoftset[X],
(F_1,\E), (F_2,\E), (F_3,\E), (F_4,\E) \right\}$
where the soft sets $(F_i,\E)$ (with $i=1,\ldots 4$) over $X$ are defined by setting:
$$
\begin{array}{ll}
F_1(e_1)= \{ h_2 \}, \qquad & F_1(e_2) = \{ h_1 \}, \\
F_2(e_1)= \{ h_2, h_3 \},  & F_2(e_2) = \{ h_1 ,h_2 \}, \\
F_3(e_1)= \{ h_1, h_2 \},  & F_3(e_2) = \{ h_1, h_2 \}, \\
F_4(e_1)= \{ h_2 \},  & F_4(e_2) =  \{ h_1, h_3 \} .
\end{array}
$$
Since, it can be easily verified that the soft union $(F_2,\E) \softcup (F_3,\E)$ is
the soft set $(H,\E)$ defined by $H(e_1) = X$ and $H(e_2)=\{ h_1 ,h_2 \}$ and so that
$(H,\E) \notin \Acal$ it follows that $\Acal$ is not a soft topology on $X$.
\end{enumerate}
\end{example}

\begin{definition}{\rm\cite{shabir}}
\label{def:softclosedset}
Let $(X,\Tau,\E)$ be a soft topological space over $X$ and be $(F,\E)$ be a soft set over $X$.
We say that $(F,\E)$ is \df{soft closed set} in $X$ if its complement $(F,\E)^\complement$
is a soft open set, i.e. if $(F^\complement,\E) \in \Tau$.
\end{definition}

\begin{notation}
The family of all soft closed sets of a soft topological space $(X,\Tau,\E)$ over $X$ with respect to $\E$
will be denoted by $\sigma$,
or more precisely with $\sigma(X,\E)$ when it is necessary to specify
the universal set $X$ and the set of parameters $\E$.
\end{notation}

Using Proposition \ref{pro:demorganslaws} and Proposition \ref{pro:generalizeddemorganlaws}
with Definition \ref{def:softtopology}, the following statement is immediately proved.

\begin{proposition}{\rm\cite{shabir}}
\label{pro:propertiesofsoftclosedsets}
Let $\sigma$ be the family of soft closed sets of a soft topological space $(X,\Tau,\E)$, the following hold:
\begin{enumerate}
\item the null soft set is a soft closed set, i.e. $\nullsoftset \in \sigma$
\item the absolute soft set is a soft closed set, i.e. $\absolutesoftset[X] \in \sigma$
\item the soft union of any two soft closed sets is still a soft closed set, i.e.
for every $(C,\E), (D,\E) \in \sigma$ then $(C,\E) \softcup (D,\E) \in \sigma$.
\item the soft intersection of any subfamily of soft closed sets is still a soft closed set, i.e.
for every $\left\{(C_i,\E) \right\}_{i\in I} \subseteq \sigma$ then
$\softbigcap_{i \in I} (C_i,\E) \in \sigma$
\end{enumerate}
\end{proposition}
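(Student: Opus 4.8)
The plan is to derive all four properties as the De Morgan duals of the four axioms of a soft topology (Definition \ref{def:softtopology}), using the characterization that a soft set is soft closed precisely when its soft complement is soft open (Definition \ref{def:softclosedset}). In each case I would rewrite the soft complement of the relevant soft set via the appropriate De Morgan law, recognize the result as a membership in $\Tau$ that follows from one of the soft topology axioms, and then conclude that the original soft set lies in $\sigma$.

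For item (i), I would observe that $\nullsoftset^\complement \, \softequal \, \absolutesoftset[X]$ (one of the complement identities recorded just after Definition \ref{def:softcomplement}), and since $\absolutesoftset[X] \in \Tau$ by axiom (ii) of Definition \ref{def:softtopology}, the null soft set is soft closed. Dually, for item (ii) I would use $\absolutesoftset[X]^\complement \, \softequal \, \nullsoftset$ together with $\nullsoftset \in \Tau$ from axiom (i) to conclude that the absolute soft set is soft closed.

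For item (iii), given $(C,\E),(D,\E) \in \sigma$, their complements $(C,\E)^\complement$ and $(D,\E)^\complement$ lie in $\Tau$ by Definition \ref{def:softclosedset}. Applying the finite De Morgan law (Proposition \ref{pro:demorganslaws}(i)), I would write $\left( (C,\E) \softcup (D,\E) \right)^\complement \, \softequal \, (C,\E)^\complement \softcap (D,\E)^\complement$; this soft intersection of two members of $\Tau$ belongs to $\Tau$ by axiom (iii), so $(C,\E) \softcup (D,\E)$ is soft closed. For item (iv), given $\left\{ (C_i,\E) \right\}_{i \in I} \subseteq \sigma$, the generalized De Morgan law (Proposition \ref{pro:generalizeddemorganlaws}(ii)) gives $\left( \softbigcap_{i \in I} (C_i,\E) \right)^\complement \, \softequal \, \softbigcup_{i \in I} (C_i,\E)^\complement$; each $(C_i,\E)^\complement \in \Tau$, so their soft union lies in $\Tau$ by axiom (iv), and hence $\softbigcap_{i \in I} (C_i,\E)$ is soft closed.

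There is no substantial obstacle here: the argument is a routine dualization. The only point requiring care is pairing each axiom with the correctly oriented De Morgan law -- the complement of a soft union becoming a soft intersection of complements, and vice versa -- so that the membership one lands on is exactly the hypothesis supplied by the matching soft topology axiom.
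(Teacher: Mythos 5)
Your proof is correct and follows exactly the route the paper indicates: the paper states that the proposition "is immediately proved" using Proposition \ref{pro:demorganslaws} and Proposition \ref{pro:generalizeddemorganlaws} together with Definition \ref{def:softtopology}, which is precisely your dualization argument, with each De Morgan law correctly paired to the corresponding soft topology axiom. No gaps; your treatment of items (i) and (ii) via the complement identities $\nullsoftset^\complement \softequal \absolutesoftset[X]$ and $\absolutesoftset[X]^\complement \softequal \nullsoftset$ is also the intended one.
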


As a consequence of Proposition \ref{pro:propertiesofsoftclosedsets}, the family $\sigma(X,\E)$
is sometimes called the soft closed topology over $X$.

The following is a trivial generalization of a proposition given in \cite{shabir}.

\begin{proposition}
\label{pro:intersectionofsofttopologies}
Let $N \in \NN$ be an integer number greater than $0$
and $\left\{ (X,\Tau_i,\E) \right\}_{i=1,\ldots N}$ be a finite family
of $N$ soft topological spaces over the same universe $X$
with respect to $\E$,
then the (crisp) intersection $\bigcap_{i=1}^N \Tau_i$ of all its soft topologies $\Tau_i$
(with $i=1,\ldots N$)
is still a soft topology on $X$
and so $\left( X, \bigcap_{i=1}^N \Tau_i, \E \right)$ is a soft topological space over $X$.
\end{proposition}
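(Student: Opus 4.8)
The plan is to put $\Tau = \bigcap_{i=1}^N \Tau_i$ and to verify directly the four defining conditions of a soft topology listed in Definition \ref{def:softtopology}. The guiding observation throughout is elementary: a soft set belongs to the crisp intersection $\Tau$ precisely when it belongs to every $\Tau_i$, so each closure property shared by all the factors $\Tau_i$ is automatically inherited by $\Tau$.

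First I would dispatch conditions \textit{(i)} and \textit{(ii)}. Since each $(X,\Tau_i,\E)$ is a soft topological space, both $\nullsoftset$ and $\absolutesoftset[X]$ belong to $\Tau_i$ for every $i=1,\ldots,N$, and hence they belong to $\bigcap_{i=1}^N \Tau_i = \Tau$.

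For condition \textit{(iii)} I would take any two soft sets $(F,\E), (G,\E) \in \Tau$; by the definition of the intersection they lie in $\Tau_i$ for each $i$, and since each $\Tau_i$ is closed under pairwise soft intersection, $(F,\E) \softcap (G,\E) \in \Tau_i$ for every $i$, whence $(F,\E) \softcap (G,\E) \in \Tau$. Condition \textit{(iv)} is treated identically: an arbitrary subfamily $\left\{ (F_j,\E) \right\}_{j \in J} \subseteq \Tau$ is contained in each $\Tau_i$, so its soft union $\softbigcup_{j \in J} (F_j,\E)$ belongs to every $\Tau_i$ by the union axiom applied to $\Tau_i$, and therefore to $\Tau$.

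The argument carries no genuine obstacle: it is exactly the routine verification that a crisp intersection of topologies is again a topology, and in fact the finiteness of $N$ is inessential, since the same reasoning applies verbatim to an arbitrary family of soft topologies over $X$. The only point meriting a moment's care is the bookkeeping in condition \textit{(iv)}: one must observe that the chosen subfamily, being a subset of $\Tau$, is simultaneously a subset of each $\Tau_i$, which is precisely what licenses invoking the individual union axiom of each single soft topology $\Tau_i$.
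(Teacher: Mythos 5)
Your proof is correct and is exactly the routine axiom-by-axiom verification the paper has in mind: the paper itself states this proposition without proof, presenting it as a trivial generalization of the corresponding two-topology result of Shabir and Naz, and your argument (membership in the crisp intersection means membership in every $\Tau_i$, so each closure axiom is inherited) is the standard one. Your side remark that the finiteness of $N$ is inessential is also accurate and harmless.
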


\begin{remark}
\label{rem:unionofsofttopologies}
The following example shows that, unlike intersections,
the (crisp) union of (two) soft topologies is not necessarily a soft topology.
\end{remark}

\begin{example}
\label{ex:union_of_two_soft_topologies}
Let $X= \{ h_1, h_2, h_3 \}$ be the universe set and $\E = \{ e_1, e_2 \}$ be the set of parameters.
Consider the family of soft sets $\Tau_1 = \left\{ \nullsoftset, \absolutesoftset[X],
(F_1,\E), (F_2,\E), (F_3,\E), (F_4,\E)  \right\}$
where the soft sets $(F_i,\E)$ (with $i=1,\ldots 4$) over $X$ are defined by setting:
$$
\begin{array}{ll}
F_1(e_1)= \{ h_2 \}, \qquad & F_1(e_2) = \{ h_1 \}, \\
F_2(e_1)= \{ h_1, h_2 \},  & F_2(e_2) = \{ h_1 \}, \\
F_3(e_1)= \{ h_1, h_2 \},  & F_3(e_2) = \{ h_1, h_3 \}, \\
F_4(e_1)= \{ h_1, h_2 \},  & F_4(e_2) = X .
\end{array}
$$
and $\Tau_2 = \left\{ \nullsoftset, \absolutesoftset[X],
(G_1,\E), (G_2,\E), (G_3,\E) \right\}$
where the soft sets $(G_i,\E)$ (with $i=1,\ldots 3$) over $X$ are defined by setting:
$$
\begin{array}{ll}
G_1(e_1)= \{ h_2 \}, \qquad & G_1(e_2) = \{ h_1 \}, \\
G_2(e_1)= \{ h_2 \},  & G_2(e_2) = \{ h_1, h_3 \}, \\
G_3(e_1)= \{ h_2, h_3 \},  & G_3(e_2) = \{ h_1, h_3 \} .
\end{array}
$$
It is a simple routine to verify that $\Tau_1$ and $\Tau_2$ verify
the axioms from Definition \ref{def:softtopology}
(in particular $\Tau_1$ coincides with the soft topology $\Tau$ of the Example \ref{ex:soft_topology}(iii))
and so that they are soft topologies on $X$.

It is worth noting that, according to Proposition \ref{pro:intersectionofsofttopologies},
the crisp intersection intersection of the two soft topologies $\Tau_1$ and $\Tau_2$, that is
$$\Tau_1 \cap \Tau_2 = \left\{ \nullsoftset, \absolutesoftset[X], (F_1,\E) \right\}$$
is indeed a soft topology over the same universe $X$.

However, if we consider the union
$$\Acal = \Tau_1 \cup \Tau_2 = \left\{ \nullsoftset, \absolutesoftset[X],
(F_1,\E), (F_2,\E), (F_3,\E), (F_4,\E), (G_2,\E), (G_3,\E) \right\}$$
we can observe that $(F_3, \E) \softcup (G_2, \E)$ is the soft set $(H,\E)$
defined by $H(e_1) = X$ and $H(e_2) = \{ h_1, h_3 \}$ and so that $(H,\E) \notin \Acal$.
This proves that the union $\Acal = \Tau_1 \cup \Tau_2$ of the two soft topologies $\Tau_1$ and $\Tau_2$
is a family of soft sets which is not a soft topology on $X$.
\end{example}

Although the union of soft topologies is not in general a soft topology,
it will be useful to give the following definition.

\begin{definition}
\label{def:supsofttopologies}
Let $\left\{ (X,\Tau_i,\E) \right\}_{i=1,\ldots N}$ be a finite family
of soft topological spaces over the same universe $X$ with respect to $\E$.
The \df{supremum soft topology} of all soft topologies $\Tau_i$ (with $i=1,\ldots N$),
denoted by $\bigvee_{i=1}^N \Tau_i$,
is the smallest soft topology on $X$ containing the (crisp) union $\bigcup_{i=1}^N \Tau_i$.
The soft topological space $(\left( X, \bigvee_{i=1}^N \Tau_i, \E \right)$
is called the \df{supremum soft topological space}.
\end{definition}

\begin{example}
\label{ex:supsofttopologies}
Let $X= \{ h_1, h_2, h_3 \}$ be the universe set, $\E = \{ e_1, e_2 \}$ be the set of parameters
and consider the two soft topologies $\Tau_1 = \left\{ \nullsoftset, \absolutesoftset[X],
(F_1,\E), (F_2,\E), (F_3,\E), (F_4,\E)  \right\}$
and $\Tau_2 = \left\{ \nullsoftset, \absolutesoftset[X], (G_1,\E), (G_2,\E), (G_3,\E) \right\}$
defined in the Example \ref{ex:union_of_two_soft_topologies}.

The supremum soft topology $\Tau_1 \vee \Tau_2$ is the smallest soft topology over $X$
which contain the crisp union $\Tau_1 \cup \Tau_2$.
Thus, after noticing that
$(F_1,\E) \softcup (G_2,\E) = (F_4,\E) \softcup (G_2,\E) = (G_2,\E)$,
$(F_2,\E) \softcup (G_2,\E) = (F_3,\E) \softcup (G_2,\E) = (F_3,\E)$,
$(F_1,\E) \softcup (G_3,\E) = (G_3,\E)$
and that
$(F_1,\E) \softcap (G_2,\E) = (F_1,\E) \softcap (G_3,\E) = (F_2,\E) \softcap (G_3,\E) =  (F_1,\E)$,
$(F_2,\E) \softcap (G_2,\E) = (F_3,\E) \softcap (G_2,\E) = $
$(G_2,\E) = (F_3,\E) \softcap (G_3,\E)
= (F_4,\E) \softcap (G_2,\E) = (F_4,\E) \softcap (G_3,\E) = (G_2,\E)$,
it follows that:
$$\Tau_1 \vee \Tau_2
 = \left\{ \nullsoftset, \absolutesoftset[X], (F_1,\E), (F_2,\E), (F_3,\E), (F_4,\E),
  (G_2,\E), (G_3,\E), (H,\E) \right\}$$
where the unique new soft open set
$(H,\E) = (F_2,\E) \softcup (G_3,\E) = (F_3,\E) \softcup (G_3,\E) = (F_4,\E) \softcup (G_3,\E) $
is defined by $H(e_1) = X$ and $H(e_2) = \{ h_1, h_3 \}$.
\end{example}

\begin{notation}
\label{not:infofsofttopologies}
The unique soft topology $\bigcap_{i=1}^N \Tau_i$ of Proposition \ref{pro:intersectionofsofttopologies}
is evidently the \df{infimum} of all the soft topologies $\Tau_i$ (with $i=1,\ldots N$)
and it is generally denoted by $\bigwedge_{i=1}^N \Tau_i$.
\end{notation}

Thus, the set $\STE$ of all soft topologies over a universe $X$ with respect to the set of parameters $\E$,
equipped with the two binary operations $\wedge$ and $\vee$, that is $(\STE, \wedge, \vee)$ is a lattice.


\begin{proposition}{\rm\cite{shabir}}
\label{pro:softtopologytotopologies}
Let $(X,\Tau,\E)$ be a soft topological space over $X$. Then, for every $e \in \E$,
the family $\Tau_e = \left\{ F(e) : \, (F,\E) \in \Tau \right\}$ is a (crisp) topology on $X$.
\end{proposition}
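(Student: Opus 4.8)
The plan is to fix an arbitrary parameter $e \in \E$ and verify directly that the family $\Tau_e = \left\{ F(e) : (F,\E) \in \Tau \right\}$ satisfies the three defining axioms of a crisp topology on $X$. The guiding idea is that each crisp set-theoretic operation on the approximations at $e$ is the image under evaluation-at-$e$ of the corresponding \emph{soft} operation on $\Tau$; since soft union and soft intersection are defined parameterwise (Definitions \ref{def:softunion}, \ref{def:softintersection} and \ref{def:generalizedsoftunion}), evaluating at $e$ commutes with these operations, and each soft-topology axiom of Definition \ref{def:softtopology} transfers immediately to the claimed crisp-topology axiom.

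First I would dispose of the trivial sets. Since $\nullsoftset \in \Tau$ and $\absolutesoftset[X] \in \Tau$ by conditions \textit{(i)} and \textit{(ii)} of Definition \ref{def:softtopology}, and since $\tilde{\emptyset}(e)=\emptyset$ while $\tilde{X}(e)=X$, both $\emptyset$ and $X$ lie in $\Tau_e$. Next, for closure under finite intersection, I would take $U,V \in \Tau_e$ and pick witnesses $(F,\E),(G,\E)\in\Tau$ with $F(e)=U$ and $G(e)=V$; condition \textit{(iii)} gives $(F,\E)\softcap(G,\E)\in\Tau$, and because $(F\cap G)(e)=F(e)\cap G(e)=U\cap V$, the intersection $U\cap V$ belongs to $\Tau_e$.

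For closure under arbitrary unions I would proceed analogously: given a family $\{U_i\}_{i\in I}$ of members of $\Tau_e$, I would select for each $i$ a soft open set $(F_i,\E)\in\Tau$ with $F_i(e)=U_i$, invoke condition \textit{(iv)} to conclude $\softbigcup_{i\in I}(F_i,\E)\in\Tau$, and then use $\left(\bigcup_{i\in I}F_i\right)(e)=\bigcup_{i\in I}F_i(e)=\bigcup_{i\in I}U_i$ to place the union in $\Tau_e$. This exhausts the three axioms and yields the claim for the arbitrary fixed $e$.

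The argument is essentially axiom-chasing, so there is no serious obstacle; the only point deserving a moment's care is that evaluation-at-$e$ sending $(F,\E)$ to $F(e)$ is in general many-to-one, so forming $\Tau_e$ amounts to collapsing many soft open sets onto the same crisp set. Consequently each step above requires \emph{choosing} a soft preimage for every crisp open set involved (a choice made for each index $i$ in the union case). This raises no well-definedness difficulty, since only the \emph{existence} of a preimage is needed rather than its uniqueness, and it is precisely the parameterwise nature of the soft operations that guarantees the chosen preimages recombine to produce a soft open set whose $e$-approximation is the desired crisp union or intersection.
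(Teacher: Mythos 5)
Your proof is correct, and it is the standard argument: the paper states this proposition as a result cited from Shabir and Naz without reproducing a proof, and your axiom-by-axiom verification (that evaluation at $e$ commutes with the parameterwise soft operations, so each condition of Definition \ref{def:softtopology} transfers to the corresponding crisp axiom for $\Tau_e$) is exactly the argument of the cited source. Your closing remark about needing only the existence, not uniqueness, of a soft preimage for each crisp open set is the right observation and disposes of the only subtle point.
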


\begin{remark}
\label{rem:familyoftopologies_no_induces_soft_topology}
Proposition \ref{pro:softtopologytotopologies} shows that every soft topological space $(X,\Tau,\E)$
gives a parameterized family $\left\{ (X,\Tau_e) \right\}_{e \in \E}$ of ordinary topological spaces.
By way, as shown in the following counterexample, 
the converse does not hold, i.e. a family $\Acal \subseteq \SSE[X]$ of soft sets
is not in general a soft topology
even if every family $\Acal_e = \left\{ F(e) : \, (F,\E) \in \Acal \right\}$
of sets corresponding to each parameter $e \in \E$ defines a topology.
\end{remark}

\begin{example}
\label{ex:familyoftopologies_no_induces_soft_topology}
Let $X= \{ h_1, h_2, h_3 \}$ be the universe set and $\E = \{ e_1, e_2 \}$ be the set of parameters.
Consider the family $\Acal = \left\{ \nullsoftset, \absolutesoftset[X],
(F_1,\E), (F_2,\E), (F_3,\E), (F_4,\E) \right\}$ of soft sets
$(F_i,\E)$ (with $i=1,\ldots 4$) over $X$ defined by:
$$
\begin{array}{ll}
F_1(e_1)= \{ h_2 \}, \qquad & F_1(e_2) = \{ h_1 \}, \\
F_2(e_1)= \{ h_2, h_3 \},  & F_2(e_2) = \{ h_1 ,h_2 \}, \\
F_3(e_1)= \{ h_1, h_2 \},  & F_3(e_2) = \{ h_1, h_2 \}, \\
F_4(e_1)= \{ h_2 \},  & F_4(e_2) =  \{ h_1, h_3 \} .
\end{array}
$$
that, in Example \ref{ex:soft_topology}(iv), we have already proved
to do not form a soft topology over $X$.
Also, the crisp families corresponding to the parameters $e_1$ and $e_2$ respectively, i.e.
$$
\Acal_{e_1} = \left\{ \emptyset, X, \{h_2 \}, \{h_1, h_2\},  \{h_2, h_3\} \right\}
$$
and
$$
\Acal_{e_1} = \left\{ \emptyset, X, \{h_1 \}, \{h_1, h_2\},  \{h_1, h_3\} \right\}
$$
are topologies on the set $X$.
\end{example}

Let us also observe that in \cite{hazra} the point of view is reversed and the previous Proposition
is taken as a different definition for a topology of soft subsets over $X$.
So, Proposition \ref{pro:softtopologytotopologies} says that every soft topological space
in the sense of Shabir--Naz \cite{shabir} is also a soft topological space
in the sense of Hazra--Mujumdar--Samanta \cite{hazra}.


\begin{definition}{\rm\cite{zorlutuna}}
\label{pro:softneighbourhoodofasoftpoint}
Let $(X, \Tau, \E)$ be a soft topological space, $(N,\E) \in \SSE[X]$ be a soft set
and $(x_\alpha, \E) \in \SPE[X]$ be a soft point over a common universe $X$.
We say that $(N,\E)$ is a \df{soft neighbourhood} of the soft point $(x_\alpha, \E)$
if there is some soft open set soft containing the soft point and soft contained in the soft set,
that is if there exists some soft open set $(A,\E) \in \Tau$
such that $(x_\alpha, \E) \softin (A,\E) \softsubseteq (N,\E)$.
\end{definition}

\begin{notation}
\label{not:familyofsoftneighbourhoodofasoftpoint}
The family of all soft neighbourhoods of a soft point $(x_\alpha, \E) \in \SPE[X]$
in a soft topological space $(X, \Tau, \E)$ will be denoted by
$\Ncal_{(x_\alpha, \E)}$
(or more precisely with $\Ncal^\Tau_{(x_\alpha, \E)}$ if it is necessary to specify the topology).
\end{notation}

\begin{proposition}{\rm\cite{zorlutuna}}
\label{pro:propertiesofsoftneighbourhoodofasoftpoint}
The family $\Ncal_{(x_\alpha, \E)}$ of all the soft neighbourhoods
of a soft point $(x_\alpha, \E) \in \SPE[X]$
in a soft topological space $(X, \Tau, \E)$ over $X$ satisfies the following four properties:
\begin{enumerate}
\item for any $(N,\E) \in \Ncal_{(x_\alpha, \E)}$, $(x_\alpha, \E) \softin (N,\E)$
\item for any $(M,\E), (N,\E) \in \Ncal_{(x_\alpha, \E)}$,
$(M,\E) \softcap (N,\E) \in \Ncal_{(x_\alpha, \E)}$
\item for any $(N,\E) \in \Ncal_{(x_\alpha, \E)}$ and every soft set $(F,\E) \in \SSE[X]$
such that $(N,\E) \softsubseteq (F,\E)$ then $(F,\E) \in \Ncal_{(x_\alpha, \E)}$
\item for any $(N,\E) \in \Ncal_{(x_\alpha, \E)}$
there exists some $(M,\E) \in \Ncal_{(x_\alpha, \E)}$ such that
for every soft point $(y_\beta, \E) \softin (M,\E)$
then $(N,\E) \in \Ncal_{(y_\beta, \E)}$
\end{enumerate}
\end{proposition}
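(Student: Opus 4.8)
The plan is to reduce each of the four properties to the defining condition of a soft neighbourhood, namely that $(N,\E) \in \Ncal_{(x_\alpha, \E)}$ holds precisely when there exists a soft open set $(A,\E) \in \Tau$ with $(x_\alpha, \E) \softin (A,\E) \softsubseteq (N,\E)$. Throughout I would freely use that, by Definition \ref{def:softpointsoftbelongstosoftset}, soft membership $(x_\alpha, \E) \softin (F,\E)$ is the same as the soft inclusion $(x_\alpha, \E) \softsubseteq (F,\E)$, hence equivalent to the pointwise condition $x \in F(\alpha)$, together with the transitivity of soft inclusion supplied by Proposition \ref{pro:propertiessubsets}.

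For property (i), given $(N,\E) \in \Ncal_{(x_\alpha, \E)}$ I would pick a witnessing $(A,\E) \in \Tau$ with $(x_\alpha, \E) \softin (A,\E) \softsubseteq (N,\E)$ and simply chain the inclusions $(x_\alpha, \E) \softsubseteq (A,\E) \softsubseteq (N,\E)$, which yields $(x_\alpha, \E) \softin (N,\E)$ at once. Property (iii) is the same chaining argument: if $(N,\E) \softsubseteq (F,\E)$ and $(A,\E)$ witnesses $(N,\E)$, then $(x_\alpha, \E) \softin (A,\E) \softsubseteq (N,\E) \softsubseteq (F,\E)$ shows that the very same $(A,\E)$ witnesses $(F,\E)$, so $(F,\E) \in \Ncal_{(x_\alpha, \E)}$.

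For property (ii), I would choose witnesses $(A,\E),(B,\E) \in \Tau$ for $(M,\E)$ and $(N,\E)$ respectively and take their soft intersection as the witness for $(M,\E) \softcap (N,\E)$. Three things then need checking: that $(A,\E) \softcap (B,\E) \in \Tau$, which is axiom (iii) of Definition \ref{def:softtopology}; that $(x_\alpha, \E) \softin (A,\E) \softcap (B,\E)$, which follows from the pointwise reading $x \in A(\alpha)$ and $x \in B(\alpha)$, hence $x \in (A \cap B)(\alpha)$; and that $(A,\E) \softcap (B,\E) \softsubseteq (M,\E) \softcap (N,\E)$, which is the monotonicity of soft intersection, immediate from Definition \ref{def:softintersection} applied parameter by parameter.

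The only step requiring a genuine idea is property (iv), and there the trick is to take for $(M,\E)$ not an arbitrary neighbourhood but the witnessing soft open set itself: given $(N,\E) \in \Ncal_{(x_\alpha, \E)}$ with witness $(A,\E) \in \Tau$, set $(M,\E) = (A,\E)$. Since $(A,\E)$ is soft open and $(x_\alpha, \E) \softin (A,\E) \softsubseteq (A,\E)$, it is itself a soft neighbourhood of $(x_\alpha, \E)$, so $(M,\E) \in \Ncal_{(x_\alpha, \E)}$. Then for any soft point $(y_\beta, \E) \softin (M,\E) = (A,\E)$ the same open set gives $(y_\beta, \E) \softin (A,\E) \softsubseteq (N,\E)$, which by definition means $(N,\E) \in \Ncal_{(y_\beta, \E)}$. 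I expect (iv) to be the main obstacle precisely because it is the one property where the correct choice of $(M,\E)$ --- the open set rather than $(N,\E)$ itself --- is not forced by merely unwinding the definitions.
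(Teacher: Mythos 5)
Your proof is correct in all four parts: the chaining argument via transitivity of soft inclusion handles (i) and (iii), the soft intersection of the two open witnesses handles (ii), and taking the witnessing soft open set itself as $(M,\E)$ is exactly the right move for (iv). Note, however, that the paper does not prove this proposition at all --- it is recalled from the literature (cited to Zorlutuna et al.) as a known property of soft neighbourhood systems --- so there is no in-paper argument to compare against; your proposal supplies the standard proof that the paper omits.
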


\begin{proposition}{\rm\cite{georgiou2013}}
\label{pro:characterizationofsoftopensetsbysoftneighbourhoods}
Let $(X, \Tau, \E)$ be a soft topological space over the universe $X$.
Then a soft set $(F,\E) \in \SSE[X]$ is a soft open set if and only if
for every soft point $(x_\alpha, \E) \in (F,\E)$ there exists a soft open neighbourhood
$(N,\E) \in \Ncal_{(x_\alpha, \E)}$ such that
$(x_\alpha, \E) \softin (N,\E) \softsubseteq (F,\E)$.
\end{proposition}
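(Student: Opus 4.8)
The plan is to prove the two implications separately, noting that necessity is essentially immediate from the definition of a soft neighbourhood while sufficiency carries the real content; the engine for the latter is the representation of $(F,\E)$ as a soft union of soft open sets, obtained by gluing together the neighbourhoods furnished by the hypothesis.

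First I would dispatch necessity. Assume $(F,\E)$ is soft open and fix an arbitrary soft point $(x_\alpha,\E)\softin(F,\E)$. Taking $(N,\E)=(F,\E)$ itself, I observe that $(F,\E)\in\Tau$ satisfies $(x_\alpha,\E)\softin(F,\E)\softsubseteq(F,\E)$, so by Definition~\ref{pro:softneighbourhoodofasoftpoint} the set $(F,\E)$ is a soft open neighbourhood of $(x_\alpha,\E)$, i.e. $(F,\E)\in\Ncal_{(x_\alpha,\E)}$. Hence the required soft open neighbourhood exists for every soft point of $(F,\E)$.

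For sufficiency, suppose that to each soft point $(x_\alpha,\E)\softin(F,\E)$ the hypothesis associates a soft open set $(N_{(x_\alpha,\E)},\E)\in\Tau$ with $(x_\alpha,\E)\softin(N_{(x_\alpha,\E)},\E)\softsubseteq(F,\E)$. I would then form the soft set
$$ (G,\E) = \softbigcup \left\{ (N_{(x_\alpha,\E)},\E) : \, (x_\alpha,\E)\softin(F,\E) \right\} $$
and claim $(G,\E)\softequal(F,\E)$. The inclusion $(G,\E)\softsubseteq(F,\E)$ follows because each member of the family is soft contained in $(F,\E)$, so their soft union is too. For the reverse inclusion, every soft point $(x_\alpha,\E)\softin(F,\E)$ soft belongs to its own $(N_{(x_\alpha,\E)},\E)$, hence to $(G,\E)$ by Proposition~\ref{pro:intersectionanduniongeneralized}; since, by Proposition~\ref{pro:softsetasunionofsoftpoint}, $(F,\E)$ is the soft union of all its soft points, it follows that $(F,\E)\softsubseteq(G,\E)$. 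Having established $(F,\E)\softequal(G,\E)$ as a soft union of members of $\Tau$, condition~(iv) of Definition~\ref{def:softtopology} yields $(F,\E)\in\Tau$, i.e. $(F,\E)$ is soft open.

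The argument is standard and presents no deep obstacle; the only point requiring care is the verification that the glued family reproduces $(F,\E)$ exactly, and here the decisive step is the soft-point decomposition of Proposition~\ref{pro:softsetasunionofsoftpoint}, which guarantees that the soft points exhausting $(F,\E)$ are already covered by the chosen neighbourhoods. One should also make sure to use the soft membership relation $\softin$ (not the Shabir--Naz pointwise membership of Definition~\ref{def:pointbelongstosoftset}) throughout, since it is the former that matches the indexing of the family.
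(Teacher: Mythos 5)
Your proof is correct. Note that the paper itself gives no proof of this proposition --- it is stated as a result quoted from \cite{georgiou2013} --- so there is no internal argument to compare against; yours is the standard one (necessity by taking $(F,\E)$ itself as the required soft open neighbourhood, sufficiency by gluing the chosen neighbourhoods via generalized soft union and invoking the soft-point decomposition of Proposition \ref{pro:softsetasunionofsoftpoint} together with axiom \textit{(iv)} of Definition \ref{def:softtopology}). The only degenerate case worth a word is $(F,\E) \softequal \nullsoftset$: it has no soft points, so your union would be over an empty family (which Definition \ref{def:generalizedsoftunion} formally excludes), but in that case $(F,\E)$ is soft open directly by axiom \textit{(i)} of Definition \ref{def:softtopology}, so the claim still holds.
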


\begin{corollary}
\label{cor:characterizationofsoftopensets}
Let $(X, \Tau, \E)$ be a soft topological space over $X$.
Then a soft set $(F,\E) \in \SSE[X]$ is a soft open set if and only if
it is a soft open neighbourhood of every its soft point $(x_\alpha, \E) \in (F,\E)$.
\end{corollary}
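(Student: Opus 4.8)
The plan is to deduce this corollary directly from Proposition \ref{pro:characterizationofsoftopensetsbysoftneighbourhoods} by specializing the soft open neighbourhood appearing in its statement to the soft set $(F,\E)$ itself. The whole argument rests on the simple observation that, whenever a soft point $(x_\alpha,\E) \softin (F,\E)$ and $(F,\E)$ happens to be soft open, the soft set $(F,\E)$ can serve as its own witnessing soft open neighbourhood, so the ``there exists a soft open neighbourhood soft contained in $(F,\E)$'' clause of the proposition collapses to the much cleaner condition stated here.

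First I would prove the forward implication. Assuming $(F,\E) \in \Tau$, I fix an arbitrary soft point $(x_\alpha,\E) \softin (F,\E)$. Since $(F,\E)$ is soft open and trivially $(x_\alpha,\E) \softin (F,\E) \softsubseteq (F,\E)$, Definition \ref{pro:softneighbourhoodofasoftpoint} applied with the choice of soft open set $(A,\E) = (F,\E)$ shows that $(F,\E)$ is a soft neighbourhood of $(x_\alpha,\E)$; being moreover soft open by hypothesis, it is a soft open neighbourhood of $(x_\alpha,\E)$. As the soft point was arbitrary, $(F,\E)$ is a soft open neighbourhood of every its soft point.

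For the converse I would assume that $(F,\E)$ is a soft open neighbourhood of each of its soft points and conclude $(F,\E) \in \Tau$. If $(F,\E)$ admits at least one soft point $(x_\alpha,\E) \softin (F,\E)$, then the choice $(N,\E) = (F,\E)$ furnishes, for every such soft point, a soft open neighbourhood satisfying $(x_\alpha,\E) \softin (N,\E) \softsubseteq (F,\E)$; hence the hypothesis of Proposition \ref{pro:characterizationofsoftopensetsbysoftneighbourhoods} is met and $(F,\E)$ is soft open. The only degenerate situation is when $(F,\E)$ possesses no soft points at all, i.e.\ $(F,\E) \softequal \nullsoftset$; there the quantified condition is vacuously true, but the null soft set is soft open by axiom (i) of Definition \ref{def:softtopology}, so the conclusion still holds.

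I do not expect a genuine obstacle here, since the statement is essentially immediate once Proposition \ref{pro:characterizationofsoftopensetsbysoftneighbourhoods} is available; the reverse direction even reads off almost tautologically from the word \emph{open} in ``soft open neighbourhood''. The single point deserving explicit care is precisely the vacuous case $(F,\E) \softequal \nullsoftset$, which must be dispatched separately because there the universal quantification over soft points is empty and cannot by itself force soft openness, so one has to fall back on the topology axioms.
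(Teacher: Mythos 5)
Your proof is correct and takes essentially the same route the paper intends: the corollary is read off directly from Proposition \ref{pro:characterizationofsoftopensetsbysoftneighbourhoods} by letting $(F,\E)$ itself serve as the witnessing soft open neighbourhood of each of its soft points. Your separate handling of the vacuous case $(F,\E) \softequal \nullsoftset$ is a careful extra touch, not a departure from that approach.
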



\begin{definition}{\rm\cite{shabir}}
\label{def:softclosure}
Let $(X,\Tau,\E)$ be a soft topological space over $X$ and $(F,\E)$ be a soft set over $X$.
Then the \df{soft closure} of the soft set $(F,\E)$,
denoted by $\softcl{(F,\E)}$,
is the soft intersection of all soft closed set over $X$ soft containing $(F,\E)$, that is
$$\softcl{(F,\E)} \softequal \, \softbigcap \left\{ (C,\E) \in \sigma(X,\E) :
\, (F,\E) \softsubseteq (C,\E) \right\}$$
\end{definition}

Clearly, $\softcl{(F,\E)}$ is the smallest soft closed set over $X$ which soft contains $(F,\E)$.

\begin{proposition}{\rm\cite{shabir}}
\label{pro:propertiesofsoftclosure}
Let $(X,\Tau,\E)$ be a soft topological space over $X$, and $(F,\E)$ be a soft set over $X$.
Then the following hold:
\begin{enumerate}
\item $\softcl{ \nullsoftset } \softequal \, \nullsoftset$
\item $\softcl{ \absolutesoftset[X] } \softequal \, \absolutesoftset[X]$
\item $(F,\E) \, \softsubseteq \, \softcl{(F,\E)}$
\item $(F,\E)$ is a soft closed set over $X$ if and only if $\softcl{(F,\E)} \softequal \, (F,\E)$
\item $\softcl{ \softcl{(F,\E)} } \softequal \, \softcl{(F,\E)}$
\end{enumerate}
\end{proposition}

\begin{proposition}{\rm\cite{shabir}}
\label{pro:sofsoftclosureofoperators}
Let $(X,\Tau,\E)$ be a soft topological space and
$(F,\E), (G,\E) \in \SSE[X]$ be two soft sets over a common universe $X$.
Then the following hold:
\begin{enumerate}
\item $(F,\E) \softsubseteq (G,\E)$ implies $\softcl{(F,\E)} \softsubseteq \, \softcl{(G,\E)}$
\item $\softcl{ (F,\E) \softcup (G,\E) } \softequal \, \softcl{(F,\E)} \softcup \, \softcl{(G,\E)}$
\item $\softcl{ (F,\E) \softcap (G,\E) } \softsubseteq \, \softcl{(F,\E)} \softcap \, \softcl{(G,\E)}$
\end{enumerate}
\end{proposition}


Having in mind the Definition \ref{def:subsoftset} we can recall the following proposition.

\begin{proposition}{\rm\cite{hussain}}
\label{pro:softrelativetopology}
Let $(X,\Tau,\E)$ be a soft topological space over $X$, and $Y$ be a nonempty subset of $X$, then
the family $\Tau_Y$ of all sub soft sets of $\Tau$ over $Y$, i.e.
$$\Tau_Y = \left\{ (\rl F,\E) : \, (F,\E) \in \Tau \right\}$$
is a soft topology on $Y$.
\end{proposition}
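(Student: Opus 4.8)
The goal is to verify the four axioms of Definition~\ref{def:softtopology} for the family $\Tau_Y = \left\{ (\rl F,\E) : \, (F,\E) \in \Tau \right\}$, regarded now as a family of soft sets over the universe $Y$. By Remark~\ref{rem:subsoftset}, each member can be rewritten as $(\rl F,\E) \softequal (F,\E) \softcap (\tilde{Y},\E)$, and this identity is what I would use throughout, since it converts every statement about sub soft sets into a statement about soft intersections with the fixed constant soft set $(\tilde{Y},\E)$, for which the algebraic Propositions~\ref{pro:propertiesunionandintersection}--\ref{pro:generalizeddistributive} are available.

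First I would check axioms (i) and (ii). For the null soft set, since $\nullsoftset \in \Tau$ and $(\rl{\tilde{\emptyset}},\E) \softequal \nullsoftset \softcap (\tilde{Y},\E) \softequal \nullsoftset$ by Proposition~\ref{pro:propertiesunionandintersection}(v), the null soft set lies in $\Tau_Y$. For the absolute soft set over $Y$, I would note that $\absolutesoftset[X] \in \Tau$ and that its sub soft set over $Y$ is $\absolutesoftset[X] \softcap (\tilde{Y},\E) \softequal (\tilde{Y},\E)$; interpreting $(\tilde{Y},\E)$ as the absolute soft set of the new universe $Y$ (its approximation at each parameter is all of $Y$), this gives $\absolutesoftset[Y] \in \Tau_Y$.

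Next, for axiom (iii), take two members $(\rl F,\E), (\rl G,\E) \in \Tau_Y$ arising from $(F,\E),(G,\E) \in \Tau$. Using Remark~\ref{rem:subsoftset} together with commutativity, associativity, and idempotence of soft intersection (Propositions~\ref{pro:commutativesoftsets}(ii), \ref{pro:associativeanddistributivesoftsets}(i), and \ref{pro:propertiesunionandintersection}(iv)), I would compute
\[
(\rl F,\E) \softcap (\rl G,\E)
\softequal \bigl( (F,\E) \softcap (\tilde{Y},\E) \bigr) \softcap \bigl( (G,\E) \softcap (\tilde{Y},\E) \bigr)
\softequal \bigl( (F,\E) \softcap (G,\E) \bigr) \softcap (\tilde{Y},\E),
\]
which is the sub soft set over $Y$ of $(F,\E) \softcap (G,\E)$. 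Since $\Tau$ is a soft topology, $(F,\E) \softcap (G,\E) \in \Tau$, and hence its sub soft set belongs to $\Tau_Y$. For axiom (iv), given a subfamily $\left\{ (\rl{F_i},\E) \right\}_{i \in I}$ coming from $\left\{ (F_i,\E) \right\}_{i \in I} \subseteq \Tau$, the generalized distributive law of Proposition~\ref{pro:generalizeddistributive}(i) yields
\[
\softbigcup_{i \in I} (\rl{F_i},\E)
\softequal \softbigcup_{i \in I} \bigl( (F_i,\E) \softcap (\tilde{Y},\E) \bigr)
\softequal \Bigl( \softbigcup_{i \in I} (F_i,\E) \Bigr) \softcap (\tilde{Y},\E),
\]
the sub soft set over $Y$ of $\softbigcup_{i \in I} (F_i,\E) \in \Tau$, which therefore lies in $\Tau_Y$.

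I do not expect any genuine obstacle here; the proof is essentially the soft-set transcription of the classical subspace-topology argument, and each axiom reduces to a single application of a distributive or idempotence identity for soft intersection. The one point deserving care—and the only place where a reader might stumble—is the bookkeeping in axiom (ii): one must be explicit that the constant soft set $(\tilde{Y},\E)$ plays the role of the \emph{absolute} soft set $\absolutesoftset[Y]$ once the ambient universe is taken to be $Y$ rather than $X$, so that $\Tau_Y$ is genuinely a soft topology \emph{on $Y$} and not merely a family of soft sets over $X$. With that identification made, all four verifications go through mechanically.
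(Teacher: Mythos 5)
Your proof is correct. Note that the paper gives no proof of this proposition at all --- it is recalled as a known result from \cite{hussain} --- so there is no in-paper argument to compare against; your verification via the identity $(\rl F,\E) \softequal (F,\E) \softcap (\tilde{Y},\E)$, combined with idempotence and the generalized distributive law, is the standard subspace argument, handles all four axioms of Definition~\ref{def:softtopology}, and rightly makes explicit the one delicate point, namely that $(\tilde{Y},\E)$ must be read as the absolute soft set $\absolutesoftset[Y]$ once $Y$ is taken as the ambient universe.
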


\begin{definition}{\rm\cite{hussain}}
\label{def:softrelativetopology}
Let $(X,\Tau,\E)$ be a soft topological space over $X$, and $Y$ be a nonempty subsets of $X$,
the soft topology $\Tau_Y = \left\{ (\rl F,\E) : \, (F,\E) \in \Tau \right\}$
is said to be the \df{soft relative topology} of $\Tau$ on $Y$
and $(Y,\Tau_Y,\E)$ is called a \df{soft topological subspace} of $(X,\Tau,\E)$ on $Y$.
\end{definition}

\begin{example}
\label{ex:softrelativetopology}
Let $X= \{ h_1, h_2, h_3 \}$ be the universe set and $\E = \{ e_1, e_2 \}$ be the set of parameters.
Consider the family of soft sets $\Tau = \left\{ \nullsoftset, \absolutesoftset[X],
(F_1,\E), (F_2,\E), (F_3,\E), (F_4,\E) \right\}$
where the soft sets $(F_i,\E)$ (with $i=1,\ldots 4$) over $X$ are defined by setting:
$$
\begin{array}{ll}
F_1(e_1)= \{ h_2 \}, \qquad & F_1(e_2) = \{ h_1 \}, \\
F_2(e_1)= \{ h_1, h_2 \},  & F_2(e_2) = \{ h_1 \}, \\
F_3(e_1)= \{ h_1, h_2 \},  & F_3(e_2) = \{ h_1, h_3 \}, \\
F_4(e_1)= \{ h_1, h_2 \},  & F_4(e_2) = X .
\end{array}
$$
that in Example \ref{ex:soft_topology}(iii) we have already proved to be a soft topology on $X$.
Now, if we consider the subset $Y = \{ h_1, h_2 \}$ of $X$,
the sub soft sets $(\rl F_i,\E)$ (with $i=1,\ldots 4$) of the soft open set $(F_i,\E)$ over $Y$
results to be defined by:
$$
\begin{array}{ll}
\rl F_1(e_1)= \{ h_2 \}, \qquad & \rl F_1(e_2) = \{ h_1 \}, \\
\rl F_2(e_1)= Y,  & \rl F_2(e_2) = \{ h_1 \}, \\
\rl F_3(e_1)= Y,  & \rl F_3(e_2) = \{ h_1 \}, \\
\rl F_4(e_1)= Y,  & \rl F_4(e_2) = Y .
\end{array}
$$
and so, being $(\rl F_2,\E) \softequal (\rl F_3,\E)$ and $(\rl F_4,\E) = \absolutesoftset[Y]$,
the soft relative topology $\Tau_Y$ of $\Tau$ on $Y$ is:
$$
\Tau_Y = \left\{ \nullsoftset, \absolutesoftset[Y], (\rl F_1,\E), (\rl F_2,\E) \right\} .
$$
\end{example}


\begin{definition}{\rm\cite{hussain2015b}}
\label{def:softT0}
A soft topological space $(X,\Tau,\E)$ over $X$ is called a \df{soft $T_0$-space}
if for every pair of distinct soft points $(x_\alpha, \E), (y_\beta, \E) \in \SPE[X]$
there exists a soft open set which soft contains exactly one of these soft points, i.e.
there is some $(A,\E) \in \Tau$ such that $(x_\alpha, \E) \softin (A,\E)$
and $(y_\beta, \E) \softnotin (A,\E)$,
or there is some $(B,\E) \in \Tau$ such that $(y_\beta, \E) \softin (B,\E)$
and $(x_\alpha, \E) \softnotin (B,\E)$.
\end{definition}

\begin{definition}{\rm\cite{hussain2015b}}
\label{def:softT1}
A soft topological space $(X,\Tau,\E)$ over $X$ is called a \df{soft $T_1$-space}
if for every pair of distinct soft points $(x_\alpha, \E), (y_\beta, \E) \in \SPE[X]$
there exists a soft open set $(A,\E) \in \Tau$ which soft contains one soft point but not the other one, that is
$(x_\alpha, \E) \softin (A,\E)$ and $(y_\beta, \E) \softnotin (A,\E)$.
\end{definition}

\begin{remark}
\label{rem:differenceindefiningseparationaxioms}
Let us note that some other authors
(see, for example, \cite{georgiou2013}, \cite{nazmul}, \cite{peyghan}, \cite{shabir} and \cite{tantawy})
defined the soft separation axioms using ordinary points and not soft points,
that is referring to the Definition \ref{def:pointbelongstosoftset}
given by Xie in \cite{xie}
instead of the Definition \ref{def:softpointsoftbelongstosoftset}, which
-- in our opinion -- appears more proper and correct.
\end{remark}

\begin{proposition}{\rm\cite{hussain2015b}}
\label{pro:characterizationsoftT1space}
A soft topological space $(X,\Tau,\E)$ over $X$ is a soft $T_1$-space
if and only if every its soft point $(x_\alpha, \E) \in \SPE[X]$
is a soft closed set.
\end{proposition}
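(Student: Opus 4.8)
The plan is to prove both implications of the biconditional characterizing soft $T_1$-spaces via soft closedness of soft points.

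\medskip

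\noindent\textbf{The forward implication.}
First I would assume that $(X,\Tau,\E)$ is a soft $T_1$-space and fix an arbitrary soft point $(x_\alpha,\E) \in \SPE[X]$. My goal is to show that its soft complement $(x_\alpha,\E)^\complement$ is soft open, since by Definition \ref{def:softclosedset} this is exactly what it means for $(x_\alpha,\E)$ to be soft closed. The natural route is the neighbourhood characterization of soft open sets given in Corollary \ref{cor:characterizationofsoftopensets}: it suffices to show that $(x_\alpha,\E)^\complement$ is a soft open neighbourhood of each of its soft points. So I would take an arbitrary soft point $(y_\beta,\E) \softin (x_\alpha,\E)^\complement$. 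From $p \in F(\alpha)^\complement$-type reasoning, $(y_\beta,\E)$ soft belongs to the complement precisely when $(y_\beta,\E)$ is soft distinct from $(x_\alpha,\E)$ (i.e.\ $y \neq x$ or $\beta \neq \alpha$). Applying the soft $T_1$ axiom (Definition \ref{def:softT1}) to the distinct pair $(y_\beta,\E)$ and $(x_\alpha,\E)$, I obtain a soft open set $(A,\E) \in \Tau$ with $(y_\beta,\E) \softin (A,\E)$ and $(x_\alpha,\E) \softnotin (A,\E)$. The key step is then to verify $(A,\E) \softsubseteq (x_\alpha,\E)^\complement$: since $(x_\alpha,\E)$ has its only nonempty approximation $\{x\}$ at $\alpha$, and $x \notin A(\alpha)$ (this is what $(x_\alpha,\E)\softnotin(A,\E)$ gives), one checks approximation-by-approximation that $A(e) \subseteq x_\alpha(e)^\complement$ for every $e \in \E$. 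Thus $(A,\E)$ witnesses that $(x_\alpha,\E)^\complement$ is a soft open neighbourhood of $(y_\beta,\E)$, and by Corollary \ref{cor:characterizationofsoftopensets} the complement is soft open, hence $(x_\alpha,\E)$ is soft closed.

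\medskip

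\noindent\textbf{The converse implication.}
Conversely, I would assume every soft point is a soft closed set and take two distinct soft points $(x_\alpha,\E), (y_\beta,\E)$. By hypothesis $(x_\alpha,\E)$ is soft closed, so $(x_\alpha,\E)^\complement$ is soft open. I claim this is the required separating soft open set: I must check that $(y_\beta,\E) \softin (x_\alpha,\E)^\complement$ while $(x_\alpha,\E) \softnotin (x_\alpha,\E)^\complement$. The second is immediate from the exclusion law (Proposition \ref{pro:softexclusionandcontradiction}(ii)), since a soft point cannot soft belong to its own complement. For the first, distinctness of the two soft points (Definition \ref{def:distinctssoftpoints}) means $y \neq x$ or $\beta \neq \alpha$; in either case $y \in x_\alpha(\beta)^\complement$, so $(y_\beta,\E) \softin (x_\alpha,\E)^\complement$. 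Setting $(A,\E) = (x_\alpha,\E)^\complement$ then satisfies Definition \ref{def:softT1}.

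\medskip

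\noindent\textbf{Main obstacle.}
The routine parts are the lattice-style manipulations; the step demanding the most care is the approximation-level verification in the forward direction that the separating soft open set $(A,\E)$ is actually soft contained in $(x_\alpha,\E)^\complement$ rather than merely avoiding the point in the weaker soft-membership sense. Because soft non-membership $(x_\alpha,\E)\softnotin(A,\E)$ only asserts $x \notin A(\alpha)$ at the single expressive parameter, I would emphasize that the soft point's complement is nonempty (indeed all of $\U$) at every parameter other than $\alpha$, so the containment $A(e) \subseteq x_\alpha(e)^\complement$ holds trivially for $e \neq \alpha$ and reduces to the condition $x \notin A(\alpha)$ at $e = \alpha$. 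Keeping clearly separate the two distinct notions of membership in play — soft membership of a soft point versus ordinary membership of a support element — is exactly where an incautious argument would slip.
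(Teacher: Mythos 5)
The paper never proves this proposition: it is quoted as background from Hussain--Ahmad \cite{hussain2015b}, so there is no in-paper argument to compare yours against, and your proof must stand on its own. It does. The forward direction is handled correctly: the crucial step is exactly the one you flag, namely that the soft non-membership $(x_\alpha,\E) \softnotin (A,\E)$ only yields $x \notin A(\alpha)$, yet this suffices for $(A,\E) \softsubseteq (x_\alpha,\E)^\complement$ because $x_\alpha(e)^\complement = X$ for every $e \neq \alpha$, so the inclusion is automatic off the expressive parameter; equally important is your observation that $(y_\beta,\E) \softin (x_\alpha,\E)^\complement$ holds precisely when the two soft points are soft distinct, which is what makes Corollary \ref{cor:characterizationofsoftopensets} applicable to every soft point of the complement. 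The converse is also sound, with one cosmetic caveat: Definition \ref{def:softT1} as literally stated asks for an open set soft containing $(x_\alpha,\E)$ and not $(y_\beta,\E)$, whereas your witness $(x_\alpha,\E)^\complement$ does the opposite. This is harmless, since the axiom quantifies over all pairs of distinct soft points and the roles of the two named points can be exchanged, but a one-line remark to that effect (or simply taking $(A,\E) = (y_\beta,\E)^\complement$, which is soft open by hypothesis, soft contains $(x_\alpha,\E)$ by distinctness, and does not soft contain $(y_\beta,\E)$) would close the gap between what you verify and what the definition literally demands.
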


\begin{definition}{\rm\cite{hussain2015b}}
\label{def:softT2}
A soft topological space $(X,\Tau,\E)$ over $X$ is called a \df{soft $T_2$-space}
(or a \df{soft Hausdorff space})
if for every pair of distinct soft points $(x_\alpha, \E), (y_\beta, \E) \in \SPE[X]$
there exist two soft open sets $(A,\E), (B,\E) \in \Tau$ which are soft disjoints
and soft contain the two soft points respectively, that is
$(x_\alpha, \E) \softin (A,\E)$, $(y_\beta, \E) \softin (B,\E)$
and $(A,\E) \softcap (B,\E) = \nullsoftset$.
\end{definition}

\begin{remark}
\label{rem:implicationsbetweenT0T1T2spaces}
Evidently, every soft $T_2$-space is a soft $T_1$-space
and every soft $T_1$-space is a soft $T_0$-space.
\end{remark}

In 2015, Matejdes \cite{matejdes2015}, after having noted that
every soft set $(F,\E)$ bijectively corresponds to the graph $\graf{F}$ of its set-valued mapping
(see Remark \ref{rem:softsetasgraph}),
proved that $(X,\Tau,\E)$ is a soft topological space if and only if
the set $\Tau_{\E\times X} = \{ \graf{F} : (F,\E) \in \Tau \}$
of the graphs corresponding to the set-valued mappings of all the soft open sets of $\Tau$
forms an ordinary topology on the cartesian product $\E\times X$,
i.e. if $\left( \E\times X, \Tau_{\E\times X} \right)$ is a (crisp) topological space.
\\
Matejdes also proved that a soft set $(F,\E)$ is a soft open set in a soft topological space $(X,\Tau,\E)$
if and only if the graph $\graf{F}$ corresponding to the set-valued mapping is an open set
in the topological space $\left( \E\times X, \Tau_{\E\times X} \right)$ defined on the cartesian product.
Hence, every topological notion can be introduced for a soft topological space $(X,\Tau,\E)$
by direct reformulation of that notion in the topological space $\left( \E\times X, \Tau_{\E\times X} \right)$.


\section{Soft $N$-Topological Spaces}
Very recently, the idea of studying structures equipped with two or more soft topologies
has been considered by several researchers.
Soft bitopological spaces were introduced and studied, in 2014, by Ittanagi \cite{ittanagi}
as a soft counterpart of the notion of bitopological space.
and, independently, in 2015, by Naz, Shabir and Ali \cite{naz}
(under the slight different name of "bi-soft topological space").
In 2017, Hassan \cite{hassan} introduced also the concept
of soft tritopological spaces and gave some first results,
while Khattak et al. \cite{khattak} defined the notion of soft quad topological space
whose study continued in \cite{khattak2018}.

The concept of $N$-topological space related to ordinary topological spaces
was introduced and studied, in 2011, by Tawfiq and Majeed \cite{tawfiq}
and, independently, in 2012, by Khan \cite{khan}.

In this section we initiate the study of soft $N$-Topological Spaces
as a natural soft counterpart of the notion above,
in order to extends and generalizes the results on soft bitopological and soft tritopological spaces.

\begin{definition}
\label{def:softntopologicalspace}
Let $X$ be an initial universe set, $\E$ be a nonempty set of parameters with respect to $X$,
$N \in \NN$ be an integer number greater than $0$
and $(X,\Tau_i, \E)$ (with $i=1,\ldots N$) be $N$ different soft topological spaces over the same universe $X$,
then the soft set $X$ equipped with all these topologies will be said \df{soft $N$-topological space} over $X$
and will be denoted by $\left( X, \Tau_i, N, \E \right)$.
For any $i=1,\ldots N$, a member of $\Tau_i$ is said to be a $\Tau_i$ soft open set.
A complement of a $\Tau_i$ soft open set is called a $\Tau_i$ soft closed set.
\end{definition}

\begin{remark}
\label{rem:softnspacegeneralization}
Evidently Definition \ref{def:softntopologicalspace} generalizes several already studied
classes of soft topological spaces since
for $N=2$ we have the notion of soft bitopological spaces
introduced and studied in 2014, by Ittanagi \cite{ittanagi}
and, independently, in 2015, by Naz, Shabir and Ali \cite{naz};
for $N=3$ we obtain the definition of soft tripological spaces defined in \cite{hassan} by Hassan, and
for $N=4$ we have the notion of soft quad topological space introduced
by Khattak et al. in \cite{khattak}.
\end{remark}

\begin{definition}
\label{def:softnopenset}
Let $\left( X, \Tau_i, N, \E \right)$ be a soft $N$-topological space over $X$.
A soft set $(F,\E)$ over $X$ is said to be a \df{soft $N$-open set}
if it is a $\Tau_j$ soft open set for some $j = 1,\ldots N$, i.e.
if there exists some $j \in\ \{1,\ldots N\}$ such that  $(F,\E) \in \Tau_j$.
\end{definition}

\begin{remark}
\label{rem:alternativedefinitionofsoftnopenset}
Let us note that Definition \ref{def:softnopenset} is equivalent to say that
$(F,\E) \in \bigcup_{i=1}^N \Tau_i$ where the union operator is the usual set-union
and not a soft union as defined in Definition \ref{def:generalizedsoftunion}.

Furthermore, recalling Remark \ref{rem:unionofsofttopologies}, it is clear that
$\bigcup_{i=1}^N \Tau_i$ is not, in general, a soft topology.
\end{remark}


\begin{definition}
\label{def:softnclosedset}
Let $\left( X, \Tau_i, N, \E \right)$ be a soft $N$-topological space over $X$.
A soft set $(G,\E)$ over $X$ is said to be a \df{soft $N$-closed set} if
its soft complement is $(G,\E)^\complement$ is a soft $N$-open set.
\end{definition}

Evidently, a soft set $(G,\E)$ is a soft $N$-closed set if it is at least a
$\Tau_j$ soft closed set for some $j=1,\ldots N$.

\begin{example}
\label{ex:softntopologicalspace}
Let $X= \{ h_1, h_2, h_3, h_4, h_5, h_6 \}$ be the universe set
and $\E = \{ e_1, e_2, e_3 \}$ be the set of parameters.
Consider the following $N=4$ soft topologies over $X$:
$$
\begin{array}{ll}
\Tau_1 & = \left\{ \nullsoftset, \absolutesoftset[X], (F_1,\E), (F_2,\E) \right\}, \\
\Tau_2 & = \left\{ \nullsoftset, \absolutesoftset[X], (F_3,\E) \right\}, \\
\Tau_3 & = \left\{ \nullsoftset, \absolutesoftset[X], (F_4,\E), (F_5,\E), (F_6,\E) \right\}, \\
\Tau_4 & = \left\{ \nullsoftset, \absolutesoftset[X], (F_7,\E), (F_8,\E) \right\}
\end{array}
$$
where the soft sets $(F_i,\E)$ (with $i=1,\ldots 8$) over $X$ are respectively defined by setting:
$$
\begin{array}{lll}
F_1(e_1)= \{ h_1 \}, \qquad & F_1(e_2) = \{ h_2, h_4 \}, \qquad & F_1(e_3) = \{ h_3 \}, \\
F_2(e_1)= \{ h_1, h_2 \},  & F_2(e_2) = \{ h_2, h_4, h_6 \},  & F_2(e_3) = \{ h_2, h_3 \}, \\[2mm]
F_3(e_1)= \{ h_3 \},  & F_3(e_2) = \{ h_4 \},  & F_3(e_3) = \{ h_5 \}, \\[2mm]
F_4(e_1)= \{ h_4 \},  & F_4(e_2) = \{ h_4 \},  & F_4(e_3) = \{ h_6 \}, \\
F_5(e_1)= \{ h_4, h_5 \},  & F_5(e_2) = \{ h_4, h_6 \},  & F_5(e_3) = \{ h_6, h_8 \}, \\
F_6(e_1)= X,  & F_6(e_2) = \{ h_4, h_6 \},  & F_6(e_3) = \{ h_5, h_6, h_8 \}, \\[2mm]
F_7(e_1)= \{ h_5, h_7 \},  & F_7(e_2) = \{ h_7 \},  & F_7(e_3) = \{ h_6, h_8 \}, \\
F_8(e_1)= \{ h_5, h_7, h_8 \},  & F_8(e_2) = X,  & F_8(e_3) = \{ h_6, h_7, h_8 \}.
\end{array}
$$
Then $\left( X, \Tau_i, 4, \E \right)$ is a soft $4$-topological space over $X$.
Let us note that $\bigcup_{i=1}^4 \Tau_i$ is not a soft topology since, for example,
$(F_4, \E) \softcup (F_7,\E)$ is the soft set $(H,\E)$ defined by
$H(e_1) = \{ h_4, h_5, h_7\}, H(e_2) = \{ h_4, h_7 \},  H(e_3) = \{ h_6, h_8 \}$
and $(H,\E)\notin \bigcup_{i=1}^4 \Tau_i$.
\end{example}

From Definitions \ref{def:softntopologicalspace} and \ref{def:softnopenset} immediately
derives the following proposition.

\begin{proposition}
\label{pro:relationshiptosoftntopology}
Let $\left( X, \Tau_i, N, \E \right)$ be a soft $N$-topological space over $X$, then we have that:
\begin{enumerate}
\item every $\Tau_i$ soft open set (with $i=1,\ldots N$) is a soft $N$-open set
\item every $\Tau_i$ soft closed set (with $i=1,\ldots N$) is a soft $N$-closed set
\end{enumerate}
\end{proposition}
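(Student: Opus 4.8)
The plan is to unwind the definitions directly, since the proposition is essentially a restatement of Definition \ref{def:softnopenset} and Definition \ref{def:softnclosedset} in the contrapositive-free direction. For part (i), I would fix any $i \in \{1,\ldots N\}$ and suppose $(F,\E)$ is a $\Tau_i$ soft open set, that is, $(F,\E) \in \Tau_i$. Then, taking $j=i$ in Definition \ref{def:softnopenset}, there trivially exists some $j \in \{1,\ldots N\}$ (namely $j=i$) with $(F,\E) \in \Tau_j$, which is precisely the definition of a soft $N$-open set. Equivalently, invoking Remark \ref{rem:alternativedefinitionofsoftnopenset}, from $(F,\E)\in\Tau_i$ it follows immediately that $(F,\E) \in \bigcup_{k=1}^N \Tau_k$, so $(F,\E)$ is soft $N$-open.

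For part (ii), I would argue analogously using the complement. Suppose $(G,\E)$ is a $\Tau_i$ soft closed set for some fixed $i$; by Definition \ref{def:softclosedset} applied to the space $(X,\Tau_i,\E)$, this means its soft complement $(G,\E)^\complement$ is a $\Tau_i$ soft open set, i.e. $(G,\E)^\complement \in \Tau_i$. By part (i), which I would have just established, $(G,\E)^\complement$ is then a soft $N$-open set. Therefore, by Definition \ref{def:softnclosedset}, since the soft complement of $(G,\E)$ is soft $N$-open, $(G,\E)$ itself is a soft $N$-closed set. This matches the remark already placed after Definition \ref{def:softnclosedset} stating that a soft set is soft $N$-closed whenever it is $\Tau_j$ soft closed for some $j$.

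Since both statements follow by mere specialization of the defining existential quantifier (choosing the witness $j=i$), there is no genuine obstacle here; the proposition is immediate from the definitions and the only care needed is to keep straight the distinction between the set-theoretic union $\bigcup_{i=1}^N \Tau_i$ and the soft union, as flagged in Remark \ref{rem:alternativedefinitionofsoftnopenset}. If anything deserves a word of caution, it is ensuring that the complement in part (ii) is the soft relative complement of Definition \ref{def:softcomplement} and that the notion of $\Tau_i$ soft closed set coincides with the soft closed sets of the component space $(X,\Tau_i,\E)$, so that part (i) genuinely applies to $(G,\E)^\complement$; both points are guaranteed by Definition \ref{def:softntopologicalspace}, which declares each $(X,\Tau_i,\E)$ to be a soft topological space in its own right.
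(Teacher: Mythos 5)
Your proof is correct and takes essentially the same route as the paper, which offers no explicit argument at all but simply notes that the proposition \emph{``immediately derives''} from Definitions \ref{def:softntopologicalspace} and \ref{def:softnopenset}; your choice of the witness $j=i$ for part (i) and the passage through the soft complement for part (ii) is precisely that definitional unwinding, spelled out. The extra care you take in identifying $\Tau_i$ soft closed sets with the soft closed sets of the component space $(X,\Tau_i,\E)$ is a reasonable (and correct) point of rigour that the paper leaves implicit.
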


Let us note that the converse of the above statements does not hold in general
since can exists some soft $N$-open (closed) set which is not a $\Tau_i$ soft open (closed) set
for every $i=1,\ldots N$.


\begin{proposition}
\label{pro:softNtopologicalspaceToNtopologicalSpaces}
Let $\left( X, \Tau_i, N, \E \right)$ be a soft $N$-topological space over $X$.
Then, for every parameter $e \in \E$,
the structure $(X, {\Tau_1}_e, \ldots {\Tau_N}_e)$ where
${\Tau_i}_e = \left\{ F(e) : \, (F,\E) \in \Tau_i \right\}$ (with $i=1,\ldots N$)
is an $N$-topological space.
\end{proposition}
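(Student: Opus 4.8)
The plan is to reduce the statement to $N$ separate invocations of Proposition \ref{pro:softtopologytotopologies}. By Definition \ref{def:softntopologicalspace}, a soft $N$-topological space $\left( X, \Tau_i, N, \E \right)$ is nothing more than a finite list of $N$ soft topological spaces $(X,\Tau_i,\E)$, for $i=1,\ldots N$, sharing the same universe $X$ and the same parameter set $\E$. On the other hand, an $N$-topological space in the sense of Tawfiq and Majeed \cite{tawfiq} is by definition a set carrying $N$ topologies. Hence, fixing a parameter $e\in\E$, it suffices to show that each of the $N$ families ${\Tau_i}_e = \left\{ F(e) : \, (F,\E) \in \Tau_i \right\}$ (with $i=1,\ldots N$) is a (crisp) topology on $X$.

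First I would fix an arbitrary index $i\in\{1,\ldots N\}$ and an arbitrary parameter $e\in\E$ and regard $(X,\Tau_i,\E)$ as a soft topological space on its own. Applying Proposition \ref{pro:softtopologytotopologies} to this single soft topological space immediately gives that $\left\{ F(e) : \, (F,\E) \in \Tau_i \right\} = {\Tau_i}_e$ is a topology on $X$. Letting $i$ run over $1,\ldots N$ with $e$ still fixed then produces the entire finite list of crisp topologies ${\Tau_1}_e, \ldots, {\Tau_N}_e$ on the common set $X$, so that $(X, {\Tau_1}_e, \ldots, {\Tau_N}_e)$ is an $N$-topological space, as claimed. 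If one wishes to avoid citing Proposition \ref{pro:softtopologytotopologies}, the four topology axioms for each ${\Tau_i}_e$ can instead be verified directly from Definition \ref{def:softtopology}: one has $\emptyset\in{\Tau_i}_e$ since $\nullsoftset\in\Tau_i$, and $X\in{\Tau_i}_e$ since $\absolutesoftset[X]\in\Tau_i$, while closure under finite intersection and arbitrary union is obtained by writing crisp members as values $F(e)$ of soft open sets and appealing to the identities $(F\cap G)(e)=F(e)\cap G(e)$ and $\left(\bigcup_{j} F_j\right)(e)=\bigcup_{j} F_j(e)$ together with the corresponding closure axioms of $\Tau_i$.

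Since the result is a componentwise consequence of Proposition \ref{pro:softtopologytotopologies}, I expect no genuine obstacle in the argument. The only point that deserves care is the matter of distinctness: the hypothesis guarantees only that the soft topologies $\Tau_1,\ldots,\Tau_N$ are pairwise different, and it may well happen that two of them collapse to the same crisp topology upon restriction to a given parameter $e$, so that the projections ${\Tau_i}_e$ need not stay pairwise distinct. Consequently the conclusion should be read as asserting that $(X, {\Tau_1}_e, \ldots, {\Tau_N}_e)$ is a set equipped with $N$ (not necessarily distinct) topologies; if the adopted definition of $N$-topological space insists on distinctness, the statement holds only for those parameters $e$ at which the projected topologies remain pairwise different.
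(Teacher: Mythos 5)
Your proposal is correct and follows essentially the same route as the paper's own proof: a componentwise application of Proposition \ref{pro:softtopologytotopologies} to each $(X,\Tau_i,\E)$ for a fixed parameter $e\in\E$, then assembling the resulting crisp topologies ${\Tau_1}_e,\ldots,{\Tau_N}_e$ into an $N$-topological space via Definition \ref{def:softntopologicalspace}. Your closing caveat --- that distinct soft topologies $\Tau_i$ may project onto coinciding crisp topologies ${\Tau_i}_e$, which matters if the Tawfiq--Majeed definition insists on pairwise distinctness --- is a legitimate subtlety that the paper's proof silently passes over, but it does not alter the fact that your argument is the published one.
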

\begin{proof}
In fact, for every $i=1,\ldots N$ and every paramter $e \in \E$,
by Proposition \ref{pro:softtopologytotopologies},
any ${\Tau_i}_e$ is a crisp topology on $X$ and hence, by Definition \ref{def:softntopologicalspace},
$(X, {\Tau_1}_e, \ldots {\Tau_N}_e)$ is a (crisp) $N$-topological space on $X$
in the sense of definition given in \cite{tawfiq}.
\end{proof}

In other words, Proposition \ref{pro:softNtopologicalspaceToNtopologicalSpaces} states
that every soft $N$-topological space over $X$ gives a parameterized family of
(crisp) $N$-topological space over the same set $X$.

\begin{example}
\label{ex:softNtopologicalspaceToNtopologicalSpaces}
If we consider the soft $4$-topological space $\left( X, \Tau_i, 4, \E \right)$ over $X$
defined in the Example \ref{ex:softntopologicalspace},
then, the parameterized families of crisp topologies respect to each parameter of $\E$ are:
$$
\begin{array}{ll}
{\Tau_1}_{e_1} & = \left\{ \emptyset, X, \{h_1\}, \{h_1, h_2\} \right\}, \\
{\Tau_1}_{e_2} & = \left\{ \emptyset, X, \{h_2, h_4\}, \{h_2, h_4, h_6\} \right\}, \\
{\Tau_1}_{e_3} & = \left\{ \emptyset, X, \{h_3\}, \{h_2, h_3\}  \right\}, \\[2mm]
{\Tau_2}_{e_1} & = \left\{ \emptyset, X, \{h_3\} \right\}, \\
{\Tau_2}_{e_2} & = \left\{ \emptyset, X, \{h_4\} \right\}, \\
{\Tau_2}_{e_3} & = \left\{ \emptyset, X, \{h_5\} \right\}, \\[2mm]
{\Tau_3}_{e_1} & = \left\{ \emptyset, X, \{h_4\}, \{h_4, h_5\} \right\}, \\
{\Tau_3}_{e_2} & = \left\{ \emptyset, X, \{h_4\}, \{h_4, h_6\}  \right\}, \\
{\Tau_3}_{e_3} & = \left\{ \emptyset, X, \{h_6\}, \{h_6, h_8\}, \{h_5, h_6, h_8\}  \right\}, \\[2mm]
{\Tau_4}_{e_1} & = \left\{ \emptyset, X, \{h_5, h_7\}, \{h_5, h_7, h_8\}  \right\}, \\
{\Tau_4}_{e_2} & = \left\{ \emptyset, X, \{h_7\} \right\}, \\
{\Tau_4}_{e_3} & = \left\{ \emptyset, X, \{h_6, h_8\}, \{h_6, h_7, h_8\} \right\}  
\end{array}
$$
and so $(X, {\Tau_1}_{e_j}, {\Tau_2}_{e_j}, {\Tau_3}_{e_j}, {\Tau_4}_{e_j})$ (with $j=1,\ldots 3$)
are crisp $N$-topological spaces on $X$
in the sense of definition given by Tawfiq and Majeed in \cite{tawfiq}.
\end{example}

Recalling the Definition \ref{def:softrelativetopology}, we can give the following proposition.

\begin{proposition}
\label{pro:relative_soft_n-topological_space}
Let $\left( X, \Tau_i, N, \E \right)$ be a soft $N$-topological space over $X$
and $Y$ be a nonempty subset of $X$, then
$\left( Y, {\Tau_i}_Y, N, \E \right)$, i.e.
the set $Y$ equipped with the relative soft topologies of ${\Tau_i}_Y$ on $Y$
(with $i=1,\ldots N$) is a soft $N$-topological space over $Y$.
\end{proposition}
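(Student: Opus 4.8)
The plan is to reduce the statement to an $N$-fold application of Proposition \ref{pro:softrelativetopology}, which already settles the single-topology case. First I would recall that, by Definition \ref{def:softntopologicalspace}, a soft $N$-topological space over $Y$ is nothing more than the set $Y$ equipped with an indexed family of $N$ soft topologies over the common universe $Y$. Thus it suffices to verify that each of the $N$ relative families ${\Tau_i}_Y$ (with $i = 1,\ldots,N$) is itself a soft topology on $Y$.

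To this end I would fix an arbitrary index $i \in \{1,\ldots,N\}$. Since $(X,\Tau_i,\E)$ is, by hypothesis, one of the $N$ soft topological spaces making up the soft $N$-topological space $\left( X, \Tau_i, N, \E \right)$, I can apply Proposition \ref{pro:softrelativetopology} directly to it: taking $\Tau = \Tau_i$, that proposition guarantees that the family of all sub soft sets of $\Tau_i$ over $Y$, i.e. ${\Tau_i}_Y = \left\{ (\rl F,\E) : (F,\E) \in \Tau_i \right\}$, is a soft topology on $Y$. Repeating this for each $i = 1,\ldots,N$ produces the required $N$ relative soft topologies ${\Tau_1}_Y,\ldots,{\Tau_N}_Y$ on the common universe $Y$, whence Definition \ref{def:softntopologicalspace} immediately yields that $\left( Y, {\Tau_i}_Y, N, \E \right)$ is a soft $N$-topological space over $Y$.

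The only point requiring a word of care --- and the closest thing to an obstacle --- concerns the clause in Definition \ref{def:softntopologicalspace} asking for $N$ \emph{different} soft topologies: as Example \ref{ex:softrelativetopology} illustrates, distinct soft open sets may become soft equal once restricted to $Y$, so a priori two relative topologies ${\Tau_i}_Y$ and ${\Tau_j}_Y$ could coincide even when $\Tau_i \ne \Tau_j$. I would dispose of this by reading the definition as referring to an indexed family of soft topologies rather than to a set of pairwise distinct ones, so that a possible collapsing of some of the relative topologies on $Y$ does not affect the conclusion; the structure $\left( Y, {\Tau_i}_Y, N, \E \right)$ remains a well-defined soft $N$-topological space regardless. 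No genuine computation is required beyond the invocation of Proposition \ref{pro:softrelativetopology}.
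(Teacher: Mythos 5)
Your proof is correct and takes essentially the same route as the paper's: for each $i=1,\ldots,N$ apply Proposition \ref{pro:softrelativetopology} to conclude that ${\Tau_i}_Y$ is a soft topology on $Y$, and then invoke Definition \ref{def:softntopologicalspace}. Your additional remark about possibly coinciding relative topologies (reading the definition as an indexed family rather than a set of pairwise distinct topologies) is a point the paper passes over in silence, but it does not alter the substance of the argument.
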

\begin{proof}
In fact, for every $i=1,\ldots N$, by Proposition \ref{pro:softrelativetopology},
any ${\Tau_i}_Y$ is a soft topology on $Y$ and hence, by Definition \ref{def:softntopologicalspace},
$\left( Y, {\Tau_i}_Y, N, \E \right)$ is a soft $N$-topological space over $Y$.
\end{proof}

\begin{definition}
\label{def:relative_soft_n-topological_space}
Let $\left( X, \Tau_i, N, \E \right)$ be a soft $N$-topological space over $X$
and $Y$ be a nonempty subset of $X$
the soft $N$-topological space $\left( Y, {\Tau_i}_Y, N, \E \right)$
is said to be the \df{relative soft $N$-topological space} of $\left( X, \Tau_i, N, \E \right)$ on $Y$
or the \df{soft $N$-topological subspace} of $\left( X, \Tau_i, N, \E \right)$ on $Y$.
\end{definition}

\begin{example}
\label{ex:softntopologicalsubspace}
Let us consider the soft $4$-topological space $\left( X, \Tau_i, 4, \E \right)$ over $X$
defined in the Example \ref{ex:softntopologicalspace} and the subset $Y = \{ h_1, h_3, h_4, h_5, h_8 \}$ of $X$.
Then the sub soft sets $(\rl F_i,\E)$ (with $i=1,\ldots 8$) of the soft open set $(F_i,\E)$ over $Y$
results to be defined by:
$$
\begin{array}{lll}
\rl F_1(e_1)= \{ h_1 \}, \qquad & \rl F_1(e_2) = \{ h_4 \}, \qquad & \rl F_1(e_3) = \{ h_3 \}, \\
\rl F_2(e_1)= \{ h_1 \},  & \rl F_2(e_2) = \{ h_4 \},  & \rl F_2(e_3) = \{ h_3 \}, \\[2mm]
\rl F_3(e_1)= \{ h_3 \},  & \rl F_3(e_2) = \{ h_4 \},  & \rl F_3(e_3) = \{ h_5 \}, \\[2mm]
\rl F_4(e_1)= \{ h_4 \},  & \rl F_4(e_2) = \{ h_4 \},  & \rl F_4(e_3) = \emptyset, \\
\rl F_5(e_1)= \{ h_4, h_5 \},  & \rl F_5(e_2) = \{ h_4 \},  & \rl F_5(e_3) = \{  h_8 \}, \\
\rl F_6(e_1)= Y,  & \rl F_6(e_2) = \{ h_4 \},  & \rl F_6(e_3) = \{ h_5, h_8 \}, \\[2mm]
\rl F_7(e_1)= \{ h_5 \},  & \rl F_7(e_2) = \emptyset,  & \rl F_7(e_3) = \{ h_8 \}, \\
\rl F_8(e_1)= \{ h_5, h_8 \},  & \rl F_8(e_2) = Y,  & \rl F_8(e_3) = \{ h_8 \}
\end{array}
$$
and so, being $(\rl F_1,\E) \softequal (\rl F_2,\E)$,
the soft $4$-topological subspace $\left( Y, {\Tau_i}_Y, 4, \E \right)$
of $\left( X, \Tau_i, 4, \E \right)$ on $Y$
is formed by the following soft relative topologies ${\Tau_i}_Y$ of $\Tau_i$ on $Y$ (with $i=1,\ldots 4$):
$$
\begin{array}{ll}
{\Tau_1}_Y = \left\{ \nullsoftset, \absolutesoftset[Y], (\rl F_1,\E) \right\} , \\
{\Tau_2}_Y= \left\{ \nullsoftset, \absolutesoftset[Y], (\rl F_3,\E) \right\} , \\
{\Tau_3}_Y = \left\{ \nullsoftset, \absolutesoftset[Y], (\rl F_4,\E), (\rl F_5,\E), (\rl F_6,\E) \right\} , \\
{\Tau_4}_Y = \left\{ \nullsoftset, \absolutesoftset[Y], (\rl F_7,\E), (\rl F_8,\E) \right\} .
\end{array}
$$
\end{example}


\begin{definition}
\label{def:N-wisesoftT0}
A soft $N$-topological space $\left( X, \Tau_i, N, \E \right)$ over $X$
is called an \df{$N$-wise soft $T_0$-space}
if for every pair of distinct soft points $(x_\alpha, \E), (y_\beta, \E) \in \SPE[X]$
there exists a soft $N$-open set which soft contains exactly one of these points, i.e.
there is some soft $N$-open set $(A,\E)$
such that $(x_\alpha, \E) \softin (A,\E)$ and $(y_\beta, \E) \softnotin (A,\E)$,
or there is some soft $N$-open set $(B,\E)$
such that $(y_\beta, \E) \softin (B,\E)$ and $(x_\alpha, \E) \softnotin (B,\E)$.
\end{definition}

\begin{proposition}
\label{pro:softT0_N-wisesoftT0}
Let $\left( X, \Tau_i, N, \E \right)$ be a soft $N$-topological space over $X$.
If at least one soft topological space $(X,\Tau_j, \E)$ (for some $j = 1, \ldots N$) is a soft $T_0$-space,
then $\left( X, \Tau_i, N, \E \right)$ is an $N$-wise soft $T_0$-space.
\end{proposition}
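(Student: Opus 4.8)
The plan is to reduce the $N$-wise separation requirement directly to the separation already guaranteed inside the single soft $T_0$ topology $\Tau_j$, exploiting the fact that every $\Tau_j$ soft open set is automatically a soft $N$-open set. No new soft set will need to be constructed.

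First I would fix an arbitrary pair of distinct soft points $(x_\alpha, \E), (y_\beta, \E) \in \SPE[X]$ and invoke the hypothesis that $(X, \Tau_j, \E)$ is a soft $T_0$-space for the given index $j$. By Definition \ref{def:softT0}, this furnishes a $\Tau_j$ soft open set soft containing exactly one of the two soft points: either some $(A,\E) \in \Tau_j$ with $(x_\alpha, \E) \softin (A,\E)$ and $(y_\beta, \E) \softnotin (A,\E)$, or some $(B,\E) \in \Tau_j$ with $(y_\beta, \E) \softin (B,\E)$ and $(x_\alpha, \E) \softnotin (B,\E)$.

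Next I would lift this separating set from the level of $\Tau_j$ to the level of the whole soft $N$-topological structure. By Proposition \ref{pro:relationshiptosoftntopology}(i), every $\Tau_j$ soft open set is a soft $N$-open set (equivalently, it lies in $\bigcup_{i=1}^N \Tau_i$). Hence the set $(A,\E)$ (respectively $(B,\E)$) produced above is itself a soft $N$-open set that soft contains exactly one of the two soft points. This is precisely the condition demanded by Definition \ref{def:N-wisesoftT0}, and since the pair $(x_\alpha, \E), (y_\beta, \E)$ was arbitrary, it follows that $\left( X, \Tau_i, N, \E \right)$ is an $N$-wise soft $T_0$-space.

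There is essentially no hard step here: the whole argument is a transfer along the inclusion $\Tau_j \subseteq \bigcup_{i=1}^N \Tau_i$. The only point requiring a little care is the bookkeeping of the two symmetric disjuncts in the $T_0$ definition, so that whichever case supplies the separating $\Tau_j$ soft open set is carried over verbatim to the $N$-wise formulation; no topology axiom and no soft operation need be invoked.
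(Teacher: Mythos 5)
Your proposal is correct and follows essentially the same route as the paper's own proof: fix a pair of distinct soft points, apply Definition \ref{def:softT0} in the soft $T_0$-space $(X,\Tau_j,\E)$ to obtain the separating $\Tau_j$ soft open set, and then use Proposition \ref{pro:relationshiptosoftntopology} to conclude that this set is a soft $N$-open set witnessing Definition \ref{def:N-wisesoftT0}. There is no substantive difference between the two arguments.
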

\begin{proof}
Suppose that there exists some $j = 1, \ldots N$ such that $(X,\Tau_j, \E)$ is a soft $T_0$-space.
Then, for every pair of distinct soft points $(x_\alpha, \E), (y_\beta, \E) \in \SPE[X]$,
by Definition \ref{def:softT0}, there is some $(A,\E) \in \Tau_j$ such that $(x_\alpha, \E) \softin (A,\E)$
and $(y_\beta, \E) \softnotin (A,\E)$,
or there is some $(B,\E) \in \Tau_j$ such that $(y_\beta, \E) \softin (B,\E)$
and $(x_\alpha, \E) \softnotin (B,\E)$.
Since, by Proposition \ref{pro:relationshiptosoftntopology}, every $\Tau_j$ soft open set
is a soft $N$-open set,
it follows that $(A,\E)$ and $(B,\E)$ are soft $N$-open sets
and so that Definition \ref{def:N-wisesoftT0} holds,
i.e. that $\left( X, \Tau_i, N, \E \right)$ is an $N$-wise soft $T_0$-space.
\end{proof}

\begin{remark}
\label{rem:converse_softT0_N-wisesoftT0}
The converse of Proposition \ref{pro:softT0_N-wisesoftT0} is not true in general,
that is can exist an $N$-wise soft $T_0$-space which soft topologies are not soft $T_0$
as it is shown in the following counterexample.
\end{remark}

\begin{example}
\label{ex:converse_softT0_N-wisesoftT0}
Let $X= \{ h_1, h_2, h_3 \}$ be the universe set and $\E = \{ e_1, e_2 \}$ be the set of parameters.
Consider the following $N=2$ soft topologies over $X$:
$$
\begin{array}{ll}
\Tau_1 & = \left\{ \nullsoftset, \absolutesoftset[X], (F_1,\E) \right\}, \\
\Tau_2 & = \left\{ \nullsoftset, \absolutesoftset[X], (F_2,\E), (F_3,\E), (F_4,\E), (F_5,\E) \right\}, \\
\end{array}
$$
where the soft open sets $(F_i,\E)$ (with $i=1,\ldots 4$) over $X$ are respectively defined by setting:
$$
\begin{array}{ll}
F_1(e_1)= \{ h_1 \}, \qquad & F_1(e_2) = \{ h_2 \}, \\[2mm]
F_2(e_1)= \{ h_1 \},  & F_2(e_2) = \{ h_3 \}, \\
F_3(e_1)= \{ h_1, h_3 \},  & F_3(e_2) = \{ h_1, h_3 \}, \\
F_4(e_1)= \{ h_2, h_3 \},  & F_3(e_2) = \{ h_2, h_3 \}, \\
F_5(e_1)= \absolutesoftset[X],  & F_4(e_2) = \{ h_2, h_3 \} .
\end{array}
$$
and hence the soft $2$-topological space $\left( X, \Tau_i, 2, \E \right)$.

After noticing that the set $\SPE[X]$ of all soft points over $X$ contains the following $6$ members:
$$
\SPE[X] = \left\{
\left( {h_1}_{e_1} , \E \right), \left( {h_1}_{e_2} , \E \right),
\left( {h_2}_{e_1} , \E \right), \left( {h_2}_{e_2} , \E \right),
\left( {h_3}_{e_1} , \E \right), \left( {h_3}_{e_2} , \E \right)
\right\}
$$
we can easily verify that, for every pair of distinct soft points
(of the $\frac{6(6-1)}{2}=15$ possible combinations),
there exists a soft $N$-open set which soft contains exactly one of these points
i.e., for example, that:
\begin{itemize}
\item $(F_1, \E)$ soft contains $\left( {h_1}_{e_1} , \E \right)$ but not $\left( {h_1}_{e_2} , \E \right)$,
\item $(F_1, \E)$ soft contains $\left( {h_1}_{e_1} , \E \right)$ but not $\left( {h_2}_{e_1} , \E \right)$,
\item $(F_3, \E)$ soft contains $\left( {h_1}_{e_1} , \E \right)$ but not $\left( {h_2}_{e_2} , \E \right)$,
\item \ldots\ldots\ldots
\item $(F_4, \E)$ soft contains $\left( {h_3}_{e_1} , \E \right)$ but not $\left( {h_1}_{e_2} , \E \right)$,
\item $(F_2, \E)$ soft contains $\left( {h_3}_{e_2} , \E \right)$ but not $\left( {h_2}_{e_1} , \E \right)$,
\item etc.
\end{itemize}
and so that $\left( X, \Tau_i, 2, \E \right)$ is a $2$-wise soft $T_0$-space.
However, neither the soft topological spaces $(X,\Tau_1,\E)$ and $(X,\Tau_2,\E)$ are soft $T_0$.
In fact, $(X,\Tau_1,\E)$ is a not a soft $T_0$-space
since for $x= \left( {h_1}_{e_1} , \E \right)$ and $y= \left( {h_2}_{e_2} , \E \right)$,
in $\Tau_1$, there is no soft open set that soft contains $x$ but not $y$
and no soft open set that soft contains $y$ but not $x$.
Similarly, $(X,\Tau_2,\E)$ is a not a soft $T_0$-space
since for $x= \left( {h_2}_{e_1} , \E \right)$ and $y= \left( {h_2}_{e_2} , \E \right)$,
in $\Tau_2$, there is no soft open setthat soft contains $x$ but not $y$
and no soft open set that soft contains $y$ but not $x$.
\end{example}


\begin{proposition}
\label{pro:N-wisesoftT0_supsoftT0}
If the soft $N$-topological space $\left( X, \Tau_i, N, \E \right)$ over $X$
is an $N$-wise soft $T_0$-space, then the supremum soft topological space
$\left( X, \bigvee_{i=1}^N \Tau_i , \E \right)$ is a soft $T_0$-space.
\end{proposition}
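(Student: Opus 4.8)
The plan is to exploit the fact that, by Definition~\ref{def:supsofttopologies}, the supremum soft topology $\bigvee_{i=1}^N \Tau_i$ is a soft topology containing the crisp union $\bigcup_{i=1}^N \Tau_i$, so that every soft $N$-open set is automatically a soft open set of the supremum soft topological space. Once this inclusion is in hand, the separating set provided by the $N$-wise soft $T_0$ hypothesis can be reused verbatim as a separating soft open set in the supremum topology, and the matching of Definition~\ref{def:N-wisesoftT0} with Definition~\ref{def:softT0} becomes immediate.

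First I would fix an arbitrary pair of distinct soft points $(x_\alpha,\E),(y_\beta,\E)\in\SPE[X]$. Since $\left( X, \Tau_i, N, \E \right)$ is an $N$-wise soft $T_0$-space, by Definition~\ref{def:N-wisesoftT0} there exists a soft $N$-open set soft containing exactly one of them; without loss of generality, assume there is a soft $N$-open set $(A,\E)$ with $(x_\alpha,\E)\softin(A,\E)$ and $(y_\beta,\E)\softnotin(A,\E)$, the symmetric case (a separating set $(B,\E)$ for $(y_\beta,\E)$) being handled identically.

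Next I would translate \emph{soft $N$-open} into membership in the supremum soft topology. By Remark~\ref{rem:alternativedefinitionofsoftnopenset}, the soft $N$-openness of $(A,\E)$ means exactly that $(A,\E)\in\bigcup_{i=1}^N\Tau_i$; and since, by Definition~\ref{def:supsofttopologies}, $\bigvee_{i=1}^N\Tau_i$ is a soft topology containing the crisp union $\bigcup_{i=1}^N\Tau_i$, we have $\bigcup_{i=1}^N\Tau_i\subseteq\bigvee_{i=1}^N\Tau_i$, whence $(A,\E)\in\bigvee_{i=1}^N\Tau_i$. Therefore $(A,\E)$ is a soft open set of the supremum soft topological space which soft contains $(x_\alpha,\E)$ but not $(y_\beta,\E)$, so Definition~\ref{def:softT0} is satisfied for this pair; as the pair was chosen arbitrarily, $\left( X, \bigvee_{i=1}^N \Tau_i , \E \right)$ is a soft $T_0$-space.

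I do not expect any genuine obstacle here: the whole content of the statement is the inclusion $\bigcup_{i=1}^N\Tau_i\subseteq\bigvee_{i=1}^N\Tau_i$, which is nothing but the defining property of the supremum, combined with the observation that the separating clauses of Definition~\ref{def:softT0} and Definition~\ref{def:N-wisesoftT0} coincide once \emph{soft $N$-open} is upgraded to \emph{soft open in the supremum topology}. The only point deserving a little care is bookkeeping the two symmetric alternatives of the $T_0$ condition, but these are absorbed by the single ``without loss of generality'' above.
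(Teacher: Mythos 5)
Your proposal is correct and follows essentially the same route as the paper's own proof: invoke Definition~\ref{def:N-wisesoftT0} together with Remark~\ref{rem:alternativedefinitionofsoftnopenset} to place the separating soft $N$-open set in $\bigcup_{i=1}^N \Tau_i$, then use Definition~\ref{def:supsofttopologies} to conclude it lies in $\bigvee_{i=1}^N \Tau_i$ and hence witnesses the soft $T_0$ condition. The only cosmetic difference is that you dispose of the two symmetric alternatives by a ``without loss of generality,'' whereas the paper carries both sets $(A,\E)$ and $(B,\E)$ through explicitly.
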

\begin{proof}
Suppose that $\left( X, \Tau_i, N, \E \right)$ is an $N$-wise soft $T_0$-space.
Then, for every pair of distinct soft points $(x_\alpha, \E), (y_\beta, \E) \in \SPE[X]$,
by Definition \ref{def:N-wisesoftT0} and Remark \ref{rem:alternativedefinitionofsoftnopenset}
there exists some soft $N$-open set $(A,\E) \in \bigcup_{i=1}^N \Tau_i$
such that $(x_\alpha, \E) \softin (A,\E)$ and $(y_\beta, \E) \softnotin (A,\E)$,
or there exists some soft $N$-open set $(B,\E) \in \bigcup_{i=1}^N \Tau_i$
such that $(y_\beta, \E) \softin (B,\E)$ and $(x_\alpha, \E) \softnotin (B,\E)$.
Since, by Definition \ref{def:supsofttopologies},
the supremum soft topology $\bigvee_{i=1}^N \Tau_i$ of all soft topologies $\Tau_i$ (with $i=1,\ldots N$)
is the smallest soft topology on $X$ containing the union $\bigcup_{i=1}^N \Tau_i$,
we have that both $(A,\E)$ and $(B,\E)$ also belong to $\bigvee_{i=1}^N \Tau_i$ and hence
that the supremum soft topological space $\left( X, \bigvee_{i=1}^N \Tau_i , \E \right)$ is a soft $T_0$-space.
\end{proof}

\begin{remark}
\label{rem:converse_N-wisesoftT0_supsoftT0}
The converse of Proposition \ref{pro:softT0_N-wisesoftT0} is not true in general,
that is can exist an $N$-wise soft $T_0$-space
such that the supremum soft topological space of its soft topologies is not a soft $T_0$-space,
as can be seen in the following counterexample.
\end{remark}

\begin{example}
\label{ex:converse_N-wisesoftT0_supsoftT0}
Let $X= \{ h_1, h_2, h_3 \}$ be the universe set, $\E = \{ e_1, e_2 \}$ be the set of parameters
and consider the soft $2$-topological space $\left( X, \Tau_i, 2, \E \right)$
defined in the Example \ref{ex:converse_softT0_N-wisesoftT0}.
The supremum soft topology $\Tau_1 \vee \Tau_2$, being the smallest soft topology over $X$ containing the
crisp union $\Tau_1 \cup \Tau_2$, results to be:
$$\Tau_1 \vee \Tau_2
 = \left\{ \nullsoftset, \absolutesoftset[X], (F_1,\E), (F_2,\E), (F_3,\E), (F_4,\E),
  (F_5,\E), (F_6,\E), (F_7,\E), (F_8,\E), (F_9,\E) \right\}$$
where the new soft open sets $(F_i,\E)$ (with $i=6,\ldots 9$) are:
$$
\begin{array}{llll}
(F_6,\E) = (F_1,\E) \softcup (F_2,\E) \hspace{5mm} & \textit{defined by }\hspace{4mm}
   & F_6(e_1)= \{ h_1 \}, \quad & F_6(e_2) = \{ h_2, h_3 \}, \\
(F_7,\E) = (F_1,\E) \softcup (F_3,\E)  & \textit{defined by } &  F_7(e_1)= \{ h_1, h_3 \}, & F_7(e_2) = X, \\
(F_8,\E) = (F_1,\E) \softcap (F_2,\E)  & \textit{defined by } &  F_8(e_1)= \{ h_1 \}, & F_8(e_2) = \emptyset, \\
(F_9,\E) = (F_1,\E) \softcap (F_3,\E)  & \textit{defined by } &  F_9(e_1)= \emptyset, & F_9(e_2) = \{ h_2 \} .
\end{array}
$$
However, the supremum soft topological space $\left( X, \Tau_1 \vee \Tau_2 , \E \right)$
is not a soft $T_0$-space,
since for $x= \left( {h_1}_{e_2} , \E \right)$ and $y= \left( {h_3}_{e_2} , \E \right)$,
in $\Tau_1 \vee \Tau_2$, there is no soft open set that soft contains $x$ but not $y$
and no soft open set that soft contains $y$ but not $x$.
\end{example}


Although the following result can be directly proven,
it is interesting to note that it can be achieved
from Proposition \ref{pro:softT0_N-wisesoftT0}
and Proposition \ref{pro:N-wisesoftT0_supsoftT0}.

\begin{corollary}
\label{cor:softT0_supsoftT0}
Let $\left( X, \Tau_i, N, \E \right)$ be a soft $N$-topological space over $X$.
If at least one soft topological space $(X,\Tau_j, \E)$ (for some $j = 1, \ldots N$) is a soft $T_0$-space,
then the supremum soft topological space
$\left( X, \bigvee_{i=1}^N \Tau_i , \E \right)$ is a soft $T_0$-space.
\end{corollary}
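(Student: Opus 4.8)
The plan is to obtain the statement as a direct composition of the two preceding propositions, exactly as the remark immediately before the corollary suggests. First I would invoke Proposition \ref{pro:softT0_N-wisesoftT0}: since by hypothesis at least one of the soft topological spaces $(X,\Tau_j,\E)$ (for some $j \in \{1,\ldots N\}$) is a soft $T_0$-space, that proposition yields at once that the soft $N$-topological space $\left( X, \Tau_i, N, \E \right)$ is an $N$-wise soft $T_0$-space. Then I would feed this conclusion into Proposition \ref{pro:N-wisesoftT0_supsoftT0}, whose hypothesis is precisely that $\left( X, \Tau_i, N, \E \right)$ be $N$-wise soft $T_0$; its conclusion is that the supremum soft topological space $\left( X, \bigvee_{i=1}^N \Tau_i, \E \right)$ is a soft $T_0$-space, which is exactly the desired assertion. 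The whole argument is thus a two-step syllogism with no genuine computation.

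For completeness I would also record the promised direct proof, which bypasses the notion of $N$-wise soft $T_0$-space entirely. Fix a pair of distinct soft points $(x_\alpha,\E),(y_\beta,\E) \in \SPE[X]$. Because $(X,\Tau_j,\E)$ is soft $T_0$, Definition \ref{def:softT0} supplies a separating soft open set in $\Tau_j$ --- say some $(A,\E) \in \Tau_j$ with $(x_\alpha,\E) \softin (A,\E)$ and $(y_\beta,\E) \softnotin (A,\E)$, the symmetric alternative being handled identically. The only observation needed is the chain of inclusions $\Tau_j \subseteq \bigcup_{i=1}^N \Tau_i \subseteq \bigvee_{i=1}^N \Tau_i$, where the first inclusion is trivial and the second is immediate from Definition \ref{def:supsofttopologies}, which declares the supremum soft topology to be the smallest soft topology containing the crisp union $\bigcup_{i=1}^N \Tau_i$. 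Hence $(A,\E)$ is itself a member of $\bigvee_{i=1}^N \Tau_i$ and serves as the required separating soft open set in the supremum topology, so $\left( X, \bigvee_{i=1}^N \Tau_i, \E \right)$ is soft $T_0$.

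I do not expect a real obstacle here: the mathematical content is entirely carried by the two cited propositions (or, along the direct route, by the defining minimality of the supremum soft topology). The single point deserving a moment of care is the second inclusion above --- that every member of the soft topology $\Tau_j$ actually survives into the supremum topology --- but this is an immediate consequence of Definition \ref{def:supsofttopologies} and needs no verification beyond citing it. I would therefore present the syllogistic argument as the main proof and relegate the direct check to at most a parenthetical remark.
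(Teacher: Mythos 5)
Your proposal is correct and follows exactly the paper's own route: the paper obtains this corollary precisely as the two-step syllogism you describe, composing Proposition \ref{pro:softT0_N-wisesoftT0} with Proposition \ref{pro:N-wisesoftT0_supsoftT0}. Your supplementary direct argument via the inclusions $\Tau_j \subseteq \bigcup_{i=1}^N \Tau_i \subseteq \bigvee_{i=1}^N \Tau_i$ is also sound and corresponds to the direct proof the paper alludes to but does not write out.
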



The $N$-wise soft $T_0$ property is hereditary. In fact, we have the following proposition.

\begin{proposition}
\label{pro:N-wisesoftT0ishereditary}
Let $\left( X, \Tau_i, N, \E \right)$ be an $N$-wise soft $T_0$-space over $X$
and $Y$ be a nonempty subset of $X$.
Then the soft $N$-topological subspace $\left( Y, {\Tau_i}_Y, N, \E \right)$
of $\left( X, \Tau_i, N, \E \right)$ on $Y$ is an $N$-wise soft $T_0$-space.
\end{proposition}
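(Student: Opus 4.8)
The plan is to verify the defining condition of an $N$-wise soft $T_0$-space (Definition \ref{def:N-wisesoftT0}) directly in the subspace, by restricting to $Y$ a separating soft $N$-open set that the hypothesis already furnishes in $X$. First I would fix an arbitrary pair of distinct soft points $(x_\alpha, \E), (y_\beta, \E) \in \SPE[Y]$ of the subspace. Since $Y$ is a subset of $X$, their support points $x, y$ lie in $Y \subseteq X$, so these very soft points also belong to $\SPE[X]$ and remain distinct there. Applying the hypothesis that $(X, \Tau_i, N, \E)$ is $N$-wise soft $T_0$, Definition \ref{def:N-wisesoftT0} yields a soft $N$-open set of $X$ separating them; without loss of generality assume it is some $(A,\E)$ with $(x_\alpha, \E) \softin (A,\E)$ and $(y_\beta, \E) \softnotin (A,\E)$, the symmetric alternative being handled identically. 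By Definition \ref{def:softnopenset}, $(A,\E) \in \Tau_j$ for some $j \in \{1,\ldots,N\}$.

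Next I would pass to the trace of this set on $Y$. Consider the sub soft set $(\rl[Y] A, \E)$, which by Remark \ref{rem:subsoftset} equals $(A,\E) \softcap (\tilde{Y},\E)$ and, by the definition of the relative soft topology (Definition \ref{def:softrelativetopology}), belongs to ${\Tau_j}_Y$; hence it is a soft $N$-open set of the subspace $(Y, {\Tau_i}_Y, N, \E)$. It then remains only to check that this trace still separates the two points. Using Definition \ref{def:softpointsoftbelongstosoftset}, the soft membership $(x_\alpha, \E) \softin (A,\E)$ means $x \in A(\alpha)$; because $x \in Y$ we obtain $x \in A(\alpha) \cap Y = (\rl[Y] A)(\alpha)$, so $(x_\alpha, \E) \softin (\rl[Y] A, \E)$. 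On the other side, $(y_\beta, \E) \softnotin (A,\E)$ means $y \notin A(\beta)$, whence a fortiori $y \notin A(\beta) \cap Y = (\rl[Y] A)(\beta)$, so $(y_\beta, \E) \softnotin (\rl[Y] A, \E)$. Thus $(\rl[Y] A, \E)$ soft contains exactly one of the two points, and the subspace satisfies Definition \ref{def:N-wisesoftT0}.

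I do not expect a genuine obstacle here: the whole content is the bookkeeping of the two membership verifications after intersecting with $Y$. The only point deserving care is their asymmetry. The inclusion for $x$ relies crucially on $x \in Y$, so that $x$ survives the intersection $A(\alpha)\cap Y$ — and this is exactly what it means for $(x_\alpha,\E)$ to be a soft point of the subspace over $Y$; the exclusion for $y$, by contrast, is automatic, since failing to lie in $A(\beta)$ already forces failing to lie in the smaller set $A(\beta)\cap Y$, independently of whether $y \in Y$. Once these two observations are in place the argument closes at once, and the identical reasoning applied to a separating set $(B,\E)$ disposes of the symmetric case in Definition \ref{def:N-wisesoftT0}.
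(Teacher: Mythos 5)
Your proof is correct and takes essentially the same route as the paper's: regard the two distinct soft points of the subspace as soft points of $X$, obtain a separating soft $N$-open set $(A,\E) \in \Tau_j$ there, and pass to its trace $(\rl[Y] A,\E) = (A,\E) \softcap (\tilde{Y},\E) \in {\Tau_j}_Y$, which is soft $N$-open in $\left( Y, {\Tau_i}_Y, N, \E \right)$. If anything, your write-up is slightly more careful than the paper's, which merely asserts that the restricted sets still separate the points, whereas you verify both memberships explicitly and correctly isolate the asymmetry: the inclusion $x \in A(\alpha) \cap Y$ needs $x \in Y$ (guaranteed because $(x_\alpha,\E)$ is a soft point over $Y$), while the exclusion of $y$ from the smaller set is automatic.
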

\begin{proof}
Suppose that $\left( X, \Tau_i, N, \E \right)$ is an $N$-wise soft $T_0$-space
and let $Y$ be a nonempty subset of $X$.
Then, for every pair of distinct soft points $(x_\alpha, \E), (y_\beta, \E) \in \SPE[Y]$,
it follows, in particular, that $(x_\alpha, \E), (y_\beta, \E)$ are soft points in $X$ too.
So, by hypothesis, there exists some soft $N$-open set $(A,\E)$
such that $(x_\alpha, \E) \softin (A,\E)$ and $(y_\beta, \E) \softnotin (A,\E)$,
or there exists some soft $N$-open set $(B,\E)$
such that $(y_\beta, \E) \softin (B,\E)$ and $(x_\alpha, \E) \softnotin (B,\E)$.
Thus, by Definition \ref{def:softnopenset}, there exists some $j,k \in \{1,\ldots N\}$
such that $(A,\E) \in \Tau_j$ and $(B,\E) \in \Tau_k$.
Hence, by Remark \ref{rem:subsoftset} and Definition \ref{def:softrelativetopology}, it follows that
the soft intersections $(A,\E) \softcap (\tilde{Y}, \E) = (\rl[Y] A,\E)$
and $(B,\E) \softcap (\tilde{Y}, \E) = (\rl[Y] B,\E)$
belong to the soft relative topologies $(Y, {\Tau_j}_Y, \E)$ and $(Y, {\Tau_k}_Y, \E)$ respectively
and they are such that $(x_\alpha, \E) \softin (\rl[Y] A,\E)$ and $(y_\beta, \E) \softnotin (\rl[Y] A,\E)$,
or such that $(y_\beta, \E) \softin (\rl[Y] B,\E)$ and $(x_\alpha, \E) \softnotin (\rl[Y] B,\E)$.
Finally, since by Definition \ref{def:relative_soft_n-topological_space}, we have that
both $(\rl[Y] A,\E)$ and $(\rl[Y] B,\E)$
belong to the relative soft $N$-topological space $\left( Y, {\Tau_i}_Y, N, \E \right)$,
we have that Definition \ref{def:N-wisesoftT0} holds, and so that
the soft $N$-topological subspace $\left( Y, {\Tau_i}_Y, N, \E \right)$ is an $N$-wise soft $T_0$-space.
\end{proof}

\begin{remark}
\label{rem:N-wisesoftT0subspaceofaNON-wisesoftT0}
The converse of Proposition \ref{pro:N-wisesoftT0ishereditary} is false,
i.e. can exist a non $N$-wise soft $T_0$-space $\left( X, \Tau_i, N, \E \right)$
which, for some $Y\subset X$, has a $N$-wise soft $T_0$-subspace $\left( Y, {\Tau_i}_Y, N, \E \right)$
as it is shown in the following counterexample.
\end{remark}

\begin{example}
\label{ex:N-wisesoftT0subspaceofaNON-wisesoftT0}
Let $X= \{ h_1, h_2, h_3 \}$ be the universe set and
$\E = \{ e_1, e_2 \}$ be the set of parameters.
Consider the soft $2$-topological space $\left( X, \Tau_i, 2, \E \right)$
built by the following $N=2$ soft topologies over $X$:
$$
\begin{array}{ll}
\Tau_1 & = \left\{ \nullsoftset, \absolutesoftset[X], (F_1,\E) \right\}, \\
\Tau_2 & = \left\{ \nullsoftset, \absolutesoftset[X], (F_2,\E), (F_3,\E) \right\}
\end{array}
$$
where the soft sets $(F_i,\E)$ (with $i=1,\ldots 3$) over $X$ are respectively defined by setting:
$$
\begin{array}{ll}
F_1(e_1)= \{ h_1 \}, \qquad & F_1(e_2) = \{ h_2 \}, \\[2mm]
F_2(e_1)= \{ h_2 \},  & F_2(e_2) = \{ h_1 \}, \\
F_3(e_1)= \{ h_2, h_3 \},  & F_3(e_2) = \{ h_1, h_2 \} .
\end{array}
$$
Then $\left( X, \Tau_i, 2, \E \right)$ is not an $N$-wise soft $T_0$-space,
since for $x= \left( {h_2}_{e_1} , \E \right)$ and $y= \left( {h_1}_{e_2} , \E \right)$,
there is no soft $N$-open set that soft contains $x$ but not $y$
and no soft $N$-open set that soft contains $y$ but not $x$.

However, if we consider the subset $Y = \{ h_1, h_3 \}$ of $X$,
the sub soft sets $(\rl F_i,\E)$ (with $i=1,\ldots 3$) of the soft open set $(F_i,\E)$ over $Y$
results to be defined by:
$$
\begin{array}{lll}
\rl F_1(e_1) = \{ h_1 \} , \qquad & \rl F_1(e_2) = \emptyset ,\\[2mm]
\rl F_2(e_1) = \emptyset ,  & \rl F_2(e_2) = \{ h_1 \} , \\
\rl F_3(e_1) = \{ h_3 \} ,  & \rl F_3(e_2) = \{ h_1 \} .
\end{array}
$$
Thus, the soft $2$-topological subspace $\left( Y, {\Tau_i}_Y, 2, \E \right)$
of $\left( X, \Tau_i, 2, \E \right)$ on $Y$
is formed by the following soft relative topologies ${\Tau_i}_Y$ of $\Tau_i$ on $Y$ (with $i=1,\ldots 2$):
$$
\begin{array}{ll}
{\Tau_1}_Y = \left\{ \nullsoftset, \absolutesoftset[Y], (\rl F_1,\E) \right\} \\
{\Tau_2}_Y= \left\{ \nullsoftset, \absolutesoftset[Y], (\rl F_2,\E), (\rl F_3,\E) \right\}
\end{array}
$$
and it is trivially checked that it is an $N$-wise soft $T_0$-space.
\end{example}


\begin{definition}
\label{def:N-wisesoftT1}
A soft $N$-topological space $\left( X, \Tau_i, N, \E \right)$ over $X$
is called an \df{$N$-wise soft $T_1$-space}
if for every pair of distinct soft points $(x_\alpha, \E), (y_\beta, \E) \in \SPE[X]$
there exists a soft $N$-open set $(A,\E)$ which soft contains one soft point but not the other one, that is
$(x_\alpha, \E) \softin (A,\E)$ and $(y_\beta, \E) \softnotin (A,\E)$.
\end{definition}

\begin{definition}
\label{def:N-wisesoftT2}
A soft $N$-topological space $\left( X, \Tau_i, N, \E \right)$ over $X$
is called an \df{$N$-wise soft $T_2$-space} or \df{$N$-wise soft Hausdorff space}
if for every pair of distinct soft points $(x_\alpha, \E), (y_\beta, \E) \in \SPE[X]$
there exist two soft $N$-open sets $(A,\E), (B,\E)$ which are soft disjoints
and soft contain the two soft points respectively, that is
$(x_\alpha, \E) \softin (A,\E)$, $(y_\beta, \E) \softin (B,\E)$
and $(A,\E) \softcap (B,\E) = \nullsoftset$.
\end{definition}


\begin{proposition}
\label{pro:N-wisesoftT2impliesN-wisesoftT1impliesN-wisesoftT0}
Every $N$-wise soft $T_2$-space is an $N$-wise soft $T_1$-space
and every $N$-wise soft $T_1$-space is an $N$-wise soft $T_0$-space
\end{proposition}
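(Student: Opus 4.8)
The plan is to prove the two implications separately, in each case starting from an arbitrary pair of distinct soft points and exhibiting the separating soft $N$-open set(s) demanded by the weaker axiom.

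First I would show that every $N$-wise soft $T_2$-space is an $N$-wise soft $T_1$-space. Given distinct soft points $(x_\alpha, \E), (y_\beta, \E) \in \SPE[X]$, Definition \ref{def:N-wisesoftT2} supplies soft $N$-open sets $(A,\E), (B,\E)$ with $(x_\alpha, \E) \softin (A,\E)$, $(y_\beta, \E) \softin (B,\E)$ and $(A,\E) \softcap (B,\E) \softequal \nullsoftset$. The set $(A,\E)$ already soft contains $(x_\alpha, \E)$, so it suffices to check that $(y_\beta, \E) \softnotin (A,\E)$. This is the only non-automatic step, and it follows from soft disjointness: if we had $(y_\beta, \E) \softin (A,\E)$, then $(y_\beta, \E)$ would soft belong to both $(A,\E)$ and $(B,\E)$, so that $y \in A(\beta) \cap B(\beta)$, contradicting $(A,\E) \softcap (B,\E) \softequal \nullsoftset$ (equivalently, contradicting $A(\beta) \cap B(\beta) = \emptyset$ as recorded in Remark \ref{rem:softdisjunction}). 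Hence $(y_\beta, \E) \softnotin (A,\E)$, and $(A,\E)$ witnesses Definition \ref{def:N-wisesoftT1}.

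Next I would show that every $N$-wise soft $T_1$-space is an $N$-wise soft $T_0$-space. This direction is purely logical and requires no construction: given distinct soft points $(x_\alpha, \E), (y_\beta, \E) \in \SPE[X]$, Definition \ref{def:N-wisesoftT1} yields a soft $N$-open set $(A,\E)$ with $(x_\alpha, \E) \softin (A,\E)$ and $(y_\beta, \E) \softnotin (A,\E)$, which is precisely the first of the two alternatives offered by Definition \ref{def:N-wisesoftT0}. Thus the $N$-wise soft $T_0$ condition holds verbatim.

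The main (indeed essentially the only) obstacle is the passage from soft disjointness to the exclusion $(y_\beta, \E) \softnotin (A,\E)$ in the first implication; once Remark \ref{rem:softdisjunction} is invoked this is immediate, since a common soft point would force a nonempty intersection at the expressive parameter. The second implication demands no real work, as the $N$-wise soft $T_1$ separating condition is literally one of the two disjuncts of the $N$-wise soft $T_0$ condition, so the result follows directly from the chain of definitions.
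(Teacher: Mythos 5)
Your proof is correct and takes essentially the same approach as the paper, whose entire proof is the one-line remark that the result ``easily follows'' from Definitions \ref{def:N-wisesoftT0}, \ref{def:N-wisesoftT1} and \ref{def:N-wisesoftT2}. You simply make explicit the details the paper leaves implicit, in particular the key point that soft disjointness (via Remark \ref{rem:softdisjunction}) forces $(y_\beta, \E) \softnotin (A,\E)$ in the $T_2 \Rightarrow T_1$ step.
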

\begin{proof}
It easily follows from Definitions \ref{def:N-wisesoftT0}, \ref{def:N-wisesoftT1} and \ref{def:N-wisesoftT2}.
\end{proof}


\begin{proposition}
\label{pro:softT1T2_N-wisesoftT1T2}
Let $\left( X, \Tau_i, N, \E \right)$ be a soft $N$-topological space over $X$.
If at least one soft topological space $(X,\Tau_j, \E)$ (for some $j = 1, \ldots N$)
is a soft $T_1$-space (respectively a soft $T_2$-space),
then $\left( X, \Tau_i, N, \E \right)$ is an $N$-wise soft $T_1$-space
(resp. an $N$-wise soft $T_2$-space).
\end{proposition}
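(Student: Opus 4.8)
The plan is to mirror the argument of Proposition~\ref{pro:softT0_N-wisesoftT0}, since the present statement is precisely its $T_1$ and $T_2$ counterpart, and to treat the two cases in parallel. The whole point will be that the separating soft open sets guaranteed by the soft $T_1$ (resp.\ soft $T_2$) property of a single $\Tau_j$ are, a fortiori, soft $N$-open sets by Proposition~\ref{pro:relationshiptosoftntopology}(i), so that passing from the separation property of $(X,\Tau_j,\E)$ to the $N$-wise separation property of $(X,\Tau_i,N,\E)$ costs nothing.

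First I would handle the $T_1$ case. Assuming that $(X,\Tau_j,\E)$ is a soft $T_1$-space for some $j\in\{1,\ldots N\}$, I would fix an arbitrary pair of distinct soft points $(x_\alpha,\E),(y_\beta,\E)\in\SPE[X]$ and invoke Definition~\ref{def:softT1} to produce a soft open set $(A,\E)\in\Tau_j$ with $(x_\alpha,\E)\softin(A,\E)$ and $(y_\beta,\E)\softnotin(A,\E)$. Since by Proposition~\ref{pro:relationshiptosoftntopology}(i) every $\Tau_j$ soft open set is a soft $N$-open set, the very same $(A,\E)$ witnesses Definition~\ref{def:N-wisesoftT1}, so $(X,\Tau_i,N,\E)$ is an $N$-wise soft $T_1$-space.

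For the $T_2$ case I would argue identically, now assuming $(X,\Tau_j,\E)$ is soft $T_2$. For the same arbitrary pair of distinct soft points, Definition~\ref{def:softT2} yields two soft open sets $(A,\E),(B,\E)\in\Tau_j$ with $(x_\alpha,\E)\softin(A,\E)$, $(y_\beta,\E)\softin(B,\E)$ and $(A,\E)\softcap(B,\E)=\nullsoftset$. Again each of $(A,\E)$ and $(B,\E)$ is a soft $N$-open set by Proposition~\ref{pro:relationshiptosoftntopology}(i), and the soft disjointness $(A,\E)\softcap(B,\E)=\nullsoftset$ is a property of the two sets alone, unaffected by reinterpreting them as soft $N$-open sets. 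Hence Definition~\ref{def:N-wisesoftT2} is satisfied and $(X,\Tau_i,N,\E)$ is an $N$-wise soft $T_2$-space.

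There is no serious obstacle here: the argument is essentially a one-line promotion along the inclusion $\Tau_j\subseteq\bigcup_{i=1}^N\Tau_i$ recorded in Remark~\ref{rem:alternativedefinitionofsoftnopenset}. The only point deserving a word of care is that in the $T_2$ case both separating sets come from the \emph{same} topology $\Tau_j$, whereas the $N$-wise definition would in principle permit them to come from different $\Tau_i$; this is harmless, since lying in one common topology is a stronger condition than being soft $N$-open, and the disjointness required is a statement about the two sets themselves rather than about which topology contains them.
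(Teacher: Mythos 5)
Your proof is correct and takes exactly the paper's approach: the paper's own proof of this proposition is just the remark that it is \apici{similar to the proof of Proposition \ref{pro:softT0_N-wisesoftT0}}, and your argument is precisely that proof transcribed to the $T_1$ and $T_2$ cases, promoting the separating sets from $\Tau_j$ to soft $N$-open sets via Proposition \ref{pro:relationshiptosoftntopology}. Your closing observation that both separating sets in the $T_2$ case lie in the same $\Tau_j$ (a stronger condition than mere soft $N$-openness, with soft disjointness unaffected) is a sound and welcome clarification.
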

\begin{proof}
Similar to the proof of Proposition \ref{pro:softT0_N-wisesoftT0}.
\end{proof}

\begin{proposition}
\label{pro:N-wisesoftT1T2_supsoftT1T2}
If the soft $N$-topological space $\left( X, \Tau_i, N, \E \right)$ over $X$
is an $N$-wise soft $T_1$-space (respectively an $N$-wise soft $T_2$-space),
then the supremum soft topological space
$\left( X, \bigvee_{i=1}^N \Tau_i , \E \right)$
is a soft $T_1$-space (resp. a soft $T_2$-space).
\end{proposition}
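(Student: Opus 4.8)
The plan is to transcribe, almost verbatim, the argument already used for Proposition \ref{pro:N-wisesoftT0_supsoftT0}, resting on the single structural observation that every soft $N$-open set is automatically a soft open set of the supremum topology. Indeed, by Remark \ref{rem:alternativedefinitionofsoftnopenset} a soft $N$-open set lies in $\bigcup_{i=1}^N \Tau_i$, and by Definition \ref{def:supsofttopologies} the supremum soft topology $\bigvee_{i=1}^N \Tau_i$ is the smallest soft topology containing $\bigcup_{i=1}^N \Tau_i$; hence $\bigcup_{i=1}^N \Tau_i \subseteq \bigvee_{i=1}^N \Tau_i$ and every separating set produced by the $N$-wise hypothesis is at our disposal inside the supremum topology. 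First I would fix an arbitrary pair of distinct soft points $(x_\alpha, \E), (y_\beta, \E) \in \SPE[X]$ and then treat the two assertions in parallel.

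For the $T_1$ case I would invoke Definition \ref{def:N-wisesoftT1}: since $\left( X, \Tau_i, N, \E \right)$ is an $N$-wise soft $T_1$-space, there is a soft $N$-open set $(A,\E)$ with $(x_\alpha, \E) \softin (A,\E)$ and $(y_\beta, \E) \softnotin (A,\E)$. By the containment just noted, $(A,\E) \in \bigvee_{i=1}^N \Tau_i$, so $(A,\E)$ is a soft open set of the supremum topology that separates the two soft points exactly in the sense required by Definition \ref{def:softT1}. As the pair was arbitrary, this proves that $\left( X, \bigvee_{i=1}^N \Tau_i , \E \right)$ is a soft $T_1$-space.

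For the parenthetical $T_2$ statement the reasoning is identical, now applied to the two sets furnished by Definition \ref{def:N-wisesoftT2}: there exist soft $N$-open sets $(A,\E), (B,\E)$ with $(x_\alpha, \E) \softin (A,\E)$, $(y_\beta, \E) \softin (B,\E)$ and $(A,\E) \softcap (B,\E) \softequal \nullsoftset$. Each of $(A,\E)$ and $(B,\E)$ belongs to $\bigcup_{i=1}^N \Tau_i \subseteq \bigvee_{i=1}^N \Tau_i$, so both are soft open in the supremum topology. The only point deserving a line of care — and it is the mild obstacle — is that the separating sets must be used \emph{unchanged}: one does not re-separate the points inside the larger topology, but simply observes that the disjointness $(A,\E) \softcap (B,\E) \softequal \nullsoftset$ is an intrinsic property of the two sets and is therefore preserved when passing to the bigger ambient topology, so that Definition \ref{def:softT2} is met verbatim. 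Hence $\left( X, \bigvee_{i=1}^N \Tau_i , \E \right)$ is a soft $T_2$-space, and I would present both cases together since the $T_2$ argument is obtained from the $T_1$ one merely by carrying along the second set $(B,\E)$ and its disjointness with $(A,\E)$.
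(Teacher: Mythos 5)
Your proposal is correct and follows essentially the same route as the paper: the paper's own proof is simply the remark that the argument is identical to that of Proposition \ref{pro:N-wisesoftT0_supsoftT0}, which is exactly what you have written out, using the containment $\bigcup_{i=1}^N \Tau_i \subseteq \bigvee_{i=1}^N \Tau_i$ from Definition \ref{def:supsofttopologies} to reuse the separating soft $N$-open sets unchanged. Your extra observation for the $T_2$ case --- that the soft disjointness $(A,\E) \softcap (B,\E) \softequal \nullsoftset$ is a property of the sets themselves and is therefore unaffected by enlarging the topology --- is a correct and welcome clarification of the detail the paper leaves implicit.
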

\begin{proof}
Similar to the proof of Proposition \ref{pro:N-wisesoftT0_supsoftT0}.
\end{proof}

\begin{proposition}
\label{pro:N-wisesoftT1T2ishereditary}
Let $\left( X, \Tau_i, N, \E \right)$ be an $N$-wise soft $T_1$-space
(respectively an $N$-wise soft $T_2$-space) over $X$
and $Y$ be a nonempty subset of $X$.
Then the soft $N$-topological subspace $\left( Y, {\Tau_i}_Y, N, \E \right)$ on $Y$
is an $N$-wise soft $T_1$-space (resp. an $N$-wise soft $T_2$-space).
\end{proposition}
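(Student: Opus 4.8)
The plan is to mirror the argument already used for the $N$-wise soft $T_0$ case in Proposition \ref{pro:N-wisesoftT0ishereditary}, treating the $T_1$ and $T_2$ claims in parallel. First I would fix a nonempty $Y \subseteq X$ and pick an arbitrary pair of distinct soft points $(x_\alpha, \E), (y_\beta, \E) \in \SPE[Y]$, noting that these are in particular distinct soft points over $X$, so that the hypothesis on $\left( X, \Tau_i, N, \E \right)$ applies to them directly.

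For the $T_1$ part, the hypothesis supplies a soft $N$-open set $(A,\E)$ with $(x_\alpha, \E) \softin (A,\E)$ and $(y_\beta, \E) \softnotin (A,\E)$. By Definition \ref{def:softnopenset} one has $(A,\E) \in \Tau_j$ for some $j \in \{1,\ldots N\}$, and by Remark \ref{rem:subsoftset} together with Definition \ref{def:softrelativetopology} its restriction $(\rl[Y] A,\E) = (A,\E) \softcap (\tilde{Y}, \E)$ lies in the relative topology ${\Tau_j}_Y$ and is therefore a soft $N$-open set of the subspace. The two verifications to record are $(x_\alpha, \E) \softin (\rl[Y] A,\E)$ and $(y_\beta, \E) \softnotin (\rl[Y] A,\E)$: the former holds because the support point $x$ lies in $Y$ (since $(x_\alpha, \E) \in \SPE[Y]$) and $x \in A(\alpha)$, whence $x \in A(\alpha) \cap Y = (\rl[Y] A)(\alpha)$, while the latter is immediate from $(\rl[Y] A,\E) \softsubseteq (A,\E)$. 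Definition \ref{def:N-wisesoftT1} then holds for the subspace.

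For the $T_2$ part I would argue identically but with two soft $N$-open sets $(A,\E), (B,\E)$ satisfying $(x_\alpha, \E) \softin (A,\E)$, $(y_\beta, \E) \softin (B,\E)$ and $(A,\E) \softcap (B,\E) \softequal \nullsoftset$. Their restrictions $(\rl[Y] A,\E)$ and $(\rl[Y] B,\E)$ lie in the respective relative topologies ${\Tau_j}_Y$ and ${\Tau_k}_Y$, hence in the relative soft $N$-topological space $\left( Y, {\Tau_i}_Y, N, \E \right)$, and they still soft contain $(x_\alpha, \E)$ and $(y_\beta, \E)$ respectively by the same membership computation as above. Soft disjointness is inherited, since by monotonicity of the soft intersection $(\rl[Y] A,\E) \softcap (\rl[Y] B,\E) \softsubseteq (A,\E) \softcap (B,\E) \softequal \nullsoftset$, so Definition \ref{def:N-wisesoftT2} holds.

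No genuine obstacle is expected, since the whole argument is a direct transcription of the $T_0$ case. The only point that deserves attention is the membership claim $(x_\alpha, \E) \softin (\rl[Y] A,\E)$, where one must genuinely use that the chosen points are soft points over $Y$—so that their supports actually lie in $Y$—and not merely over $X$; everything else follows from the containment $(\rl[Y] A,\E) \softsubseteq (A,\E)$ and, in the $T_2$ case, from the monotonicity of the soft intersection.
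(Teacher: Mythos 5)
Your proposal is correct and follows essentially the same route as the paper, whose proof of this proposition is literally \emph{``Similar to the proof of Proposition \ref{pro:N-wisesoftT0ishereditary}''}: restrict the separating soft $N$-open sets to $Y$ via $(\rl[Y] A,\E) = (A,\E) \softcap (\tilde{Y},\E)$, observe they lie in the relative topologies ${\Tau_j}_Y$, and check the memberships. You in fact supply the two details the paper leaves implicit --- that the membership $(x_\alpha,\E) \softin (\rl[Y] A,\E)$ uses the support of the soft point lying in $Y$, and that soft disjointness in the $T_2$ case is preserved by monotonicity of soft intersection --- so your write-up is a faithful, slightly more explicit version of the intended argument.
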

\begin{proof}
Similar to the proof of Proposition \ref{pro:N-wisesoftT0ishereditary}.
\end{proof}

\section{Conclusion}
We have defined the new notion of soft $N$-topological space
as generalization both of the concepts of soft topological space given by Shabir and Naz \cite{shabir}
and that of $N$-topological space as introduced by Tawfiq and Majeed \cite{tawfiq}
and, independently, by Khan \cite{khan}
and we have investigated some basic properties of such class of spaces with particular regard to its subspaces
and to the parameterized families of crisp topologies generated by the soft $N$-space.
We have also introduced and studied some new separation axioms called $N$-wise soft $T_0$,
$N$-wise soft $T_1$, and $N$-wise soft $T_2$.

This paper is just a beginning of the investigation of a new kind of structure.
So, it will be necessary to continue the study and carry out more theoretical research
in order to build a general framework for practical applications.

\section*{References}

\end{document}